\numberwithin{equation}{section}
\newtheorem{thm}{Theorem}[section]
\newtheorem{lem}[thm]{Lemma}
\newtheorem{sub-lem}[thm]{Sub-Lemma}
\newtheorem{sublem}[thm]{Sub-lemma}
\newtheorem{prob}[thm]{\bf Problem}
\newtheorem*{hyp}{Hypotheses}
\newtheorem{rem}[thm]{Remark}
\newcommand\cB{{\mathcal B}}
\newcommand\cC{{\mathcal C}}
\newcommand\cG{{\mathcal G}}
\newcommand\cL{{\mathcal L}}
\newcommand\cO{{\mathcal O}}
\newcommand\cN{{\mathcal N}}
\newcommand\cS{{\mathcal S}}
\newcommand\cU{{\mathcal U}}
\newcommand\bA{{\mathbb A}}
\newcommand\bC{{\mathbb C}}
\newcommand\bE{{\mathbb E}}
\newcommand\bG{{\mathbb G}}
\newcommand\bN{{\mathbb N}}
\newcommand\bP{{\mathbb P}}
\newcommand\bR{{\mathbb R}}
\newcommand\bT{{\mathbb T}}
\newcommand\bV{{\mathbb V}}
\newcommand\bZ{{\mathbb Z}}
\newcommand\ve{\varepsilon}
\newcommand\vf{\varphi}
\newcommand\Id{{\mathds{1}}}
\newcommand{\Leb}{\operatorname{Leb}}
\newcommand{\Const}{C_{\#}}
\newcommand{\supp}{\operatorname{supp}}
\newcommand{\Tr}{\operatorname{Tr}}
\newcommand\BV{{W^{1,1}}}
\newcommand\kd{\langle k\rangle}
\newcommand{\pP}{{\bf P}^{{\hskip -0.6 pt 1}}\!(\bR)}
\newcommand{\pf}{{\boldsymbol \vf}}
\begin{document}

\title[Transfer operators and uniformly hyperbolic map]{Statistical properties of uniformly hyperbolic maps and transfer operators' spectrum}
\author{Carlangelo Liverani}
\address{Carlangelo Liverani\\
Dipartimento di Matematica\\
II Universit\`{a} di Roma (Tor Vergata)\\
Via della Ricerca Scientifica, 00133 Roma, Italy.}
\email{{\tt liverani@mat.uniroma2.it}}
\date{\today}% eliminate in final version
\begin{abstract}
This is a lightning introduction to some modern techniques used in the study of the statistical properties of hyperbolic dynamical systems. The emphasis is not in presenting a comprehensive theory but rather in fleshing out the main ideas in the simplest and fastest possible manner so that the reader can quickly get the intuition necessary to easily read the more technical (and more complete) accounts of the theory.
\end{abstract}
%\keywords{...}
%\subjclass{...}
\thanks{This text is an evolution of the Lectures given at the {\em International Conference on Statistical Properties of Non-equilibrium Dynamical Systems}, SUSTC, Shenzhen, July 27 - August 2, 2016 and the Lectures given at the TMU-ICTP School, Tehran, May 5-10, 2018.
{\em An introduction to the statistical properties of hyperbolic dynamical systems}. The author acknowledges the MIUR Excellence Department Project awarded to the Department of Mathematics, University of Rome Tor Vergata, CUP E83C18000100006.}
\maketitle

\tableofcontents

%%%PAPER
%\nocite{*}
\section{Introduction}
This note is dedicated to presenting some basic modern techniques used to study the statistical properties of {\em chaotic} systems. Here by {\em chaotic} I mean uniformly hyperbolic systems. That is, systems that display a strong uniform sensitivity with respect to initial conditions. 
I will stress in particular the so called {\em functional approach} but I will also provide a simple introduction to the use of {\em standard pairs}. 

The functional approach has its origin in the study of the Koopman operator \cite{Ko} (acting on $L^2$) starting, at least, with Von Neumann mean ergodic theorem \cite{vN} and further developed by the Russian school \cite{CFS}. An important development of this point of view occurred with the study of the transfer operator in symbolic dynamics by  Sinai, Ruelle and Bowen \cite{S1,S2, R1,R2,B1,B2}.

Next, the functional approach developed further thanks to the work of Lasota-Yorke \cite{LY}, Ruelle \cite{R3}, Keller \cite{keller1, HK} and, more recently, Kitaev \cite{Ki}, just to mention a few.  This has eventually lead to the current theory, which has assumed its present form starting with \cite{BKL}. 

The basic idea being to study directly the spectrum of the Ruelle transfer operator without coding the system (even though the theory can be applied also to the transfer operator of a system after inducing). In order to do so it is necessary to consider the action of the transfer operator on an appropriate Banach (or Hilbert) space or, more generally, in an appropriate topology. The non trivial part of the theory rests in the identification of the appropriate topological spaces. 

In this note we will discuss only uniformly hyperbolic systems, yet the techniques presented here are relevant also in the non uniformly hyperbolic case, although they must be supplemented with essential new ideas such as Young towers \cite{Y1}, coupling \cite{Y2, Do1, DeL2} and Operator Renewal Theory \cite{Sa1}. 

The goal of this note it to explain which properties the above mentioned Banach spaces must enjoy and to provide a guide on how to construct and adapt them to the peculiarities of the systems at hand. Also I will briefly discuss the idea of coupling in a specially simple case, but I will not provide any detail on Young towers or Operator Renewal Theory. Moreover I will not discuss anything concerning hyperbolic flows, non-uniform hyperbolicity or partial hyperbolicity \cite{BDV}.
This note is a partial update with respect to the review \cite{Li3}. For a much more in depth and technical discussion of these topics see \cite{babook, babook2}.

The plan of the exposition is as follows: I start discussing the simplest possible case, smooth expanding maps of the circle. This allows to illustrate, in the simplest possible setting, the power of the functional approach and the type of results that can be obtained once such a  machinery is in place. In particular, I will show how important properties of the system such as exponential decay of correlation, CLT, stability and linear response easily follow from the spectral properties of the transfer operator.

Next, I will discuss the case of attractors, where the need to consider spaces of distributions becomes first apparent. Then I will develop the theory for the case of toral automorphisms. This may seem a bit silly as toral automorphisms can be studied directly using Fourier series. Yet, this will allow to illustrate in the simplest possible case the main ideas of the theory (anisotropic Banach spaces and coupling).

Finally, I will collect all the ideas previously illustrated and extend them to study general uniformly hyperbolic maps.
%%%%%%%%%%%%%%%%%%%%%
\section{Smooth expanding maps}\label{sec:expaning}
By smooth expanding map I mean a map $f\in\cC^r(\bT,\bT)$, $r\geq 2$, such that $\inf_x |f'(x)|\geq \lambda_*>1$. Clearly $(f,\bT)$ is a topological, actually differentiable, dynamical system.
Our first goal is to view it as a measurable dynamical system, hence we need to select an invariant probability measure.

Deterministic systems often have a lot of invariant measures. In particular, to any periodic orbit is associated an invariant measure (the average along the orbit). Given such plentiful possibilities, we need a criteria to select relevant invariant measures. A common choice is to consider measures that can be obtained by pushing forward a measure absolutely continuous with respect to Lebesgue. 

More precisely, let $d\mu=h(x) dx$, $h\in L^1(\bT^1,\Leb)$ and define, for all $\vf\in\cC^0(\bT,\bR)$, the average
\[
\mu(\vf)=\int_{\bT}\vf(x) \mu(dx)
\]
and the push-forward
\[
f_*\mu(\vf)=\mu(\vf\circ f).
\] 
Note that if $\mu$ is a probability measure (i.e., $h\geq 0$ and $\mu(1)=1$), then also $f_*\mu$ is a probability measure. Then
\[
\left\{\frac 1n\sum_{k=0}^{n-1}f_*^k\mu\right\}_{n\in\bN}
\]
is a weakly compact set, hence it has accumulation points. On can easily check that such accumulation points are invariant measures for $f$, that is fixed points for $f_*$ (this is, essentially, Krylov-Bogoliubov Theorem). We would then like to study such fixed points.

A simple change of variables shows that $\frac{d(f_*\mu)}{d\Leb}=\cL h$ where
\[
\cL h(x)=\sum_{f(y)=x}\frac {h(y)}{f'(y)}.
\]
The operator $\cL$ is  called the {\em (Ruelle) transfer operator}. Of course, to properly define such an operator we must specify on which space it acts. 
Since
\[
\int |\cL h(x)| dx\leq \int \cL|h| (x) dx=\int 1\circ f (x)|h(x)| dx= \int |h(x)| dx,
\]
it follows that $\cL$ is well defined as an operator from $L^1(\bT,\Leb)$ to itself, moreover it is a contraction on $L^1(\bT,\Leb)$.  In addition, if $d\mu=h_* dx$ is an invariant measure, then
\[
h_*dx=d\mu=d f_*\mu=\cL h_* dx,
\]
that is $\cL h_*=h_*$. Conversely, if $\cL h_*=h_*$, then
\[
d\mu=h_*dx=\cL h_* dx=d f_*\mu
\]
that is $d\mu=h_*dx$ is an invariant measure. 

We have thus reduced the problem of studying the invariant measures absolutely continuous with respect to Lebesgue to the problem of studying the operator $\cL$, more precisely the eigenspace associated to the eigenvalue one. We want thus to investigate the spectral theory of the operator $\cL$.
Unfortunately, the spectrum of $\cL$ on $L^1$ turns out to be the full unit disk, a not very useful fact.

Following Lasota-Yorke, we look then at the action of $\cL$ on $W^{1,1}$:\footnote{ Recall that $g\in W^{1,1}$ if $g\in L^1$ and $g'\in L^1$.}    
 \begin{equation}\label{eq:one-der}
 \frac d{dx}\cL h=\cL\left(\frac h{f'}\right)-\cL\left(h\frac{f''}{(f')^2}\right).
 \end{equation}
The above implies the so called {\em Lasota-Yorke inequalities}
\begin{equation}\label{eq:LY}
\begin{split}
&\|\cL h\|_{L^1}\leq \|h\|_{L^1}\\
&\|(\cL h)'\|_{L^1}\leq \lambda_*^{-1}\|h'\|_{L^1}+D\|h\|_{L^1}.
\end{split}
\end{equation}
Such inequalities imply that $\cL$ is well defined as an operator from $W^{1,1}$ to itself. In addition, when acting on $W^{1,1}$ it is a quasi-compact operator (see Theorem \ref{thm:hennion} for the exact statement). That is, the spectrum $\sigma_{W^{1,1}}(\cL)\subset \{z\in\cC\;:\;|z|\leq 1\}$ while the essential spectrum is strictly smaller: $\textrm{ess-}\sigma_{W^{1,1}}(\cL)\subset  \{z\in\cC\;:\;|z|\leq \lambda_*^{-1}\}$. 

To illustrate the above facts, let us consider the special case in which the distortion $D=\|\frac{f''}{(f')^2}\|_{L^\infty}$ is small, more precisely $\lambda_*^{-1}+D<1$.

Note that, if $\Leb(h)=0$, then also $\Leb(\cL h)=0$, hence the space $\bV=\{h\in L^1\;:\;\Leb(h)=0\}$ is invariant under $\cL$. Also, if $h\in \bV$, then, since $W^{1,1}\subset\cC^0$, by the mean value theorem there must exists $x_*$ such that $h(x_*)=0$, thus
\[
\|h\|_{L^1}=\int_{\bT} |h(x)|=\int_{\bT}\int_{x_*}^x |h'(y)|\leq \|h'\|_{L^1}.
\]
Next, let us define the norm $\|h\|_{W^{1,1}}=\|h'\|_{L^1}+a\|h\|_{L^1}$ for some $a>0$ to be chosen shortly.\footnote{ Note that all such norms are equivalent, so the choice of a special value of $a$ is only a matter of convenience.} Accordingly, for $h\in \bV$, equation \eqref{eq:LY} implies
\begin{equation}\label{eq:ly-00}
\begin{split}
\|\cL h\|_{W^{1,1}}&\leq \lambda_*^{-1}\|h'\|_{L^{1}}+(D+a)\|h\|_{L^1}\leq (\lambda_*^{-1}+D+a)\|h'\|_{L^1}\\
&\leq (\lambda_*^{-1}+D+a)\|h\|_{W^{1,1}}.
\end{split}
\end{equation}
We can then choose $a$ such that $\nu:=\lambda_*^{-1}+D+a<1$, which implies that $\cL$ is a strict contraction on $\bV$, that is $\sigma_{W^{1,1}}(\cL|_\bV)\subset\{z\in\bC\;:\;|z|\leq \nu\}$.
Note that $\cL'\Leb=\Leb$, hence $1\in\sigma (\cL')$ and then $1\in\sigma (\cL)$. Thus we have that there exists $h_*\in L^1$ such that $\cL h=h_*\Leb(h)+Qh$, where $\|Q\|_{W^{1,1}}\leq \nu$ and $\Leb Q=Qh_*=0$. Hence, \eqref{eq:ly-00} implies that, for each $h\in W^{1,1}$,
\[
\left\|\cL^n h-h_*\int h\right\|_{W^{1,1}}=\left\| \cL^n\left(h-h_*\int h\right)\right\|_{W^{1,1}}\leq \nu^n \left\|h-h_*\int h\right\|_{W^{1,1}}
\]
We have just proven that $h_*(x) dx$ is the only invariant measure of $f$ absolutely continuous with respect to Lebesgue.\footnote{ To make the argument precise use that $W^{1,1}$ is dense in $L^1$.} 

As already mentioned, the above spectral decomposition, and hence the uniqueness of the invariant measure absolutely continuous with respect to Lebesgue, holds in much higher generality, in particular for each $f\in \cC^2$ such that $|f'|\geq \lambda_*>1$, due to the following theorem.\footnote{ I am not stating the Theorem in its full generality as it is not needed in the following.}
\begin{thm}[\cite{He}]\label{thm:hennion}
Let $\cB\subset\cB_w$ be two Banach spaces, $\|\cdot\|$ and $\|\cdot\|_w$ being the respective norms. In addition, let $\cL:\cB\to\cB$ be a linear operator such that there exists $M,C>0$ and $n_0\in\bN$ such that $\cL^{n_0}:\cB\to\cB_w$ is a compact operator and for each $n\in\bN$ and $v\in \cB$,
\[
\begin{split}
&\|\cL^n v\|_w\leq C M^n \|v\|_w\\
&\|\cL^n v\|\leq CM^n \lambda_*^{-n}\|v\|+C M^n\|v\|_w,
\end{split}
\]
then $\cL$ has the spectral radius bounded by $M$ and the essential spectral radius bounded by $M\lambda_*^{-1}$.
\end{thm}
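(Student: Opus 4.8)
\emph{Overview.} The statement is a quasi-compactness criterion, and the natural route is to combine the second displayed inequality (a \emph{Lasota--Yorke}, or \emph{Doeblin--Fortet}, inequality) with the compactness of $\cL^{n_0}\colon\cB\to\cB_w$ through an estimate on the measure of non-compactness of the iterates $\cL^N$. First I would dispose of the spectral radius: since $\cB\subset\cB_w$ is a continuous inclusion, there is $c_0>0$ with $\|v\|_w\le c_0\|v\|$ for all $v\in\cB$; inserting this into the second inequality gives $\|\cL^n v\|\le CM^n(\lambda_*^{-n}+c_0)\|v\|$, hence $\|\cL^n\|_{\cB\to\cB}\le C_1 M^n$ with $C_1$ independent of $n$. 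By Gelfand's formula $\rho_\cB(\cL)=\lim_n\|\cL^n\|_{\cB\to\cB}^{1/n}\le M$, which is the first assertion.

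\emph{Reduction via Nussbaum's formula.} For a bounded operator $T$ on a Banach space, Nussbaum's formula gives $\rho_{\mathrm{ess}}(T)=\lim_n\|T^n\|_\alpha^{1/n}=\inf_n\|T^n\|_\alpha^{1/n}$, where $\|T\|_\alpha$ denotes the measure of non-compactness of the image $T(B_\cB)$ of the unit ball $B_\cB$; the limit exists by Fekete's lemma since $n\mapsto\log\|\cL^n\|_\alpha$ is subadditive (indeed $\|ST\|_\alpha\le\|S\|_\alpha\|T\|_\alpha$). Thus it suffices to show $\|\cL^N\|_\alpha\le\Const\,M^N\lambda_*^{-N}$ with $\Const$ independent of $N$ (though possibly depending on $n_0,C,M,\lambda_*$).

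\emph{The key estimate.} Write $N=n_0+m$ and factor $\cL^N=\cL^m\circ\cL^{n_0}$. By the first step the set $K:=\cL^{n_0}(B_\cB)$ is bounded in $\cB$, say $\|w\|\le R$ for $w\in K$, and by hypothesis it is precompact in $\cB_w$. Fix $\delta>0$ and cover $K$ by finitely many $\cB_w$-balls $B_{\cB_w}(v_i,\delta)$, $i=1,\dots,p$, with centres $v_i\in K$. Given $w\in K$, pick $i$ with $\|w-v_i\|_w\le\delta$; applying the second inequality to $\cL^m(w-v_i)$ and using $\|w-v_i\|\le 2R$ gives
\[
\|\cL^m w-\cL^m v_i\|\le CM^m\lambda_*^{-m}\|w-v_i\|+CM^m\|w-v_i\|_w\le 2CRM^m\lambda_*^{-m}+CM^m\delta .
\]
Hence $\cL^N(B_\cB)=\cL^m(K)$ is covered by the $p$ balls $B_\cB\bigl(\cL^m v_i,\,2CRM^m\lambda_*^{-m}+CM^m\delta\bigr)$; letting $\delta\downarrow 0$ yields $\|\cL^N\|_\alpha\le 2CR\,M^m\lambda_*^{-m}=\bigl(2CR\lambda_*^{n_0}M^{-n_0}\bigr)M^N\lambda_*^{-N}$, and therefore $\rho_{\mathrm{ess}}(\cL)=\lim_N\|\cL^N\|_\alpha^{1/N}\le M\lambda_*^{-1}$.

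\emph{Main obstacle.} The computation itself is routine; the point deserving care is conceptual rather than computational. Several inequivalent notions of essential spectrum are in circulation, and one must work with the one for which Nussbaum's formula holds (Browder's), so that the conclusion is genuine quasi-compactness: outside the disc $\{|z|\le M\lambda_*^{-1}\}$ the spectrum of $\cL$ on $\cB$ consists of isolated eigenvalues of finite algebraic multiplicity with finite-rank spectral projections. If one prefers not to quote Nussbaum, the alternative is the Ionescu-Tulcea--Marinescu scheme: for fixed $z$ with $|z|>M\lambda_*^{-1}$ expand the resolvent in a Neumann-type series, use the Lasota--Yorke inequality together with compactness of $\cL^{n_0}$ to realise the ``non-small'' part as a compact operator, and conclude by analytic Fredholm theory; this is longer but elementary. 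Throughout, one should keep track that every constant above ($c_0$, $C_1$, $R$, the final $\Const$) depends on $n_0$, $C$, $M$, $\lambda_*$ but never on $N$.
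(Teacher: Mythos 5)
The paper does not actually prove this theorem; it states it with a citation to Hennion \cite{He}, where the argument is exactly the one you give, via the Hausdorff measure of non-compactness and Nussbaum's formula. Your proof is correct and complete: the constants are tracked correctly (note that $\lambda_*^{-n}\le 1$ is used implicitly in the spectral-radius bound, consistent with the standing assumption $\lambda_*>1$), the submultiplicativity of the non-compactness seminorm and Fekete's lemma are applied properly, and you correctly flag that Nussbaum's formula gives Browder's essential spectral radius, which is precisely the notion needed to obtain the quasi-compactness the paper relies on (cf.\ Remark~\ref{rem:compact-ball}).
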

\begin{rem}\label{rem:compact-ball} In the following we will mostly use the above Theorem when $M=1$. Also, the compactness of the operator (for each $n_0\in\bN$) will often follow by checking that the unit ball in $\cB$, $\{v\in\cB\;:\; \|v\|\leq 1\}$, is relatively compact in $\cB_w$. Finally, if one can prove that there exists eigenvalues outside the essential spectrum (as we have done before), then Theorem \ref{thm:hennion} implies that the operator is {\em quasi compact} (that is, the maximal part of the spectrum consists of point spectrum).
\end{rem}
Since it is not hard to show that smooth expanding maps are mixing, it follows that $\cL$ cannot have eigenvalues of modulus one different from $1$ and that $1$ is a simple eigenvalue hence $\cL$ must mix exponentially fast (see \cite{babook} for an exhaustive discussion).

\begin{prob} Derive further \eqref{eq:one-der} to obtain a Lasota-Yorke inequality with respect to the norms $W^{p,1}$, $W^{p-1,1}$, $p\leq r-1$. Show then that the essential spectral radius of $\cL$ when acting on $W^{p,1}$ is bounded by $\lambda_*^{-p}$.
\end{prob}
An interesting consequence of the above analysis is that smooth expanding maps admit a unique {\em physical measure}. A measure $\mu$ is a physical measure if there exists a measurable set $A$ (called the {\em basin of attraction}) of positive Lebesgue measure such that, for all $\vf\in\cC^0$ and $x\in A$, 
\[
\lim_{n\to\infty}\frac 1n\sum_{k=0}^{n-1} \vf\circ f^n(x)=\mu(\vf).
\]
\begin{prob}\label{prob:mix} Show that if there exists $h_*\in L^1$ such that for all $h\in L^1$ we have $\lim_{n\to\infty}\cL^n h=h_*\int h$, then $h_*(x)dx$ is the unique physical measure of the system and the basin of attraction is the all space, but for a zero Lebesgue measure set.
\end{prob}
The above problem shows that, for the uniqueness of the physical measure, the speed of convergence is immaterial. Yet, if one has estimates on the speed of convergence (as in our case), then it is possible to obtain a much more useful bound. To see this, for $\vf\in\cC^1(\bT^1,\bC)$, let us set $\hat \vf=\vf-\mu(\vf)$ and compute
\begin{equation}\label{eq:var}
\begin{split}
&\left\|\sum_{k=0}^{n-1} \vf\circ f^k(x)-n\mu(\vf)\right\|_{L^2(\mu)}^2=\sum_{k,j=0}^{n-1}\int\overline{\hat \vf}\circ f^k(x) \cdot \hat \vf\circ f^j(x)\cdot h_*(x) dx\\
&=\sum_{k=0}^{n-1}\int|\hat \vf(x)|^2\cdot h_*(x) dx+2\sum_{k>j}^{n-1}\sum_{j=0}^{n-2}\int\overline{\hat \vf}\circ f^{k-j}(x) \cdot \hat \vf(x)\cdot h_*(x) dx\\
&=n\|\hat\vf\|_{L^2(\mu)}+2\sum_{l=1}^{n-1}(n-l)\int\overline{\hat \vf}\circ f^{l}(x) \cdot \hat \vf(x)\cdot h_*(x) dx\\
&=n\left[\|\hat\vf\|_{L^2(\mu)}+2\sum_{l=1}^{\infty}\int\overline{\hat \vf}\circ f^{l}(x) \cdot \hat \vf(x)\cdot h_*(x) dx\right]\\
&\phantom{=}-2\sum_{l=n}^{\infty}\int\overline{\hat \vf}(x) \cdot \cL^l(\hat \vf\cdot h_*)(x) dx
-2\sum_{l=1}^{n-1}l\int\overline{\hat \vf}(x) \cdot \cL^l(\hat \vf\cdot h_*)(x) dx.
\end{split}
\end{equation}
Note that\footnote{ Here, and the the following, we will use $\Const$ to mean a generic constant, depending only on the choice of $f$, which value can change from one occurrence to the next.} 
\[
\begin{split}
|\cL^n(\hat \vf\cdot h_*)(x)|&\leq h_*(x)\left|\int (\hat \vf\cdot h_*)(x)dx\right|+\|Q^n(\vf h_*)\|_{W^{1,1}}\\
&=\|Q^n(\vf h_*)\|_{W^{1,1}}\leq\Const \|\vf\|_{\cC^1} \nu^n
\end{split}
\]
for some $\nu<1$. Thus the quantity in the last line of \eqref{eq:var} is uniformly bounded in $n$ and the quantity in the square bracket of the next to the last line is well defined. Accordingly,
\begin{equation}\label{eq:Vonest}
\left\|\frac 1n\sum_{k=0}^{n-1} \vf\circ f^k(x)-\mu(\vf)\right\|_{L^2(\mu)}^2\leq\Const \frac { \|\vf\|_{\cC^1}}n.
\end{equation}
The above is a refinement, in the special case of expanding maps, of Von Neumann mean ergodic Theorem. Indeed, Von Neumann Theorem, together with the ergodicity of $\mu$, implies that the left hand side of the equation \eqref{eq:Vonest} tends to zero but without any information on the speed of convergence. Since $h_*>0$, it also provides and alternative solution to Problem \ref{prob:mix}. In addition it can be used to prove the almost sure convergence of the ergodic averages.\footnote{ Use the usual trick to study the sum in blocks of size $2^k$.} The latter follows also from the Birkhoff ergodic Theorem since $h_*>0$.

Summarizing: the ergodic average converges Lebesgue almost everywhere to the average with respect to the unique invariant measure absolutely continuous with respect to Lebesgue. A natural question is: what is the exact speed of convergence?
\subsection{The Central Limit Theorem}
\label{subsec:clt}
Let $\vf\in \cC^1(\bT,\bR)$ and set $\hat \vf:=\vf-\mu(\vf)$, then we know that
\[
\lim_{n\to\infty}\frac
1n\sum_{k=0}^{n-1}\hat \vf\circ f^k(x)=0 \quad \Leb-\text{a.e.}
\]
and \eqref{eq:Vonest} suggests that $\frac 1n\sum_{k=0}^{n-1}\hat \vf\circ f^k(x)$ is of size $\cO(n^{-\frac 12})$. It is then tempting to define
\[
\Psi_n:=\frac
1{\sqrt n}\sum_{k=0}^{n-1}\hat \vf\circ f^k.
\]
Accordingly, $\Psi_n$ a random variable with distribution $F_n(t):=\mu(\{x\;:\; \Psi_n(x)\leq t\})$. It
is well know that, for each continuous function $g$ holds\footnote{ If
$g\in\cC^1_0$, then
\[
\int_\bR g dF_n=-\int_\bR F_n(t)g'(t)dt
=-\int_\bR dt\int_{\bT^1}dx\, h_*(x)\Id_{\{z\;:\;\Psi_n(z)\leq t\}}(x)g'(t).
\]
Applying Fubini yields
\[
\begin{split}
\int_\bR g dF_n&=-\int_{\bT^1}dx\int_\bR dt\, h_*(x)\Id_{\{z\;:\;\Psi_n(z)\leq t\}}(x)g'(t)
= -\int_{\bT^1}dx\, h_*(x)\int_{\Psi_n(x)}^\infty g'(t) dt\\
&=\int_{\bT^1}dx\, h_*(x)g(\Psi_n(x)).
\end{split}
\]
The results for $g\in\cC^0$ follows by density.
}
\begin{equation}\label{eq:palle1}
\mu(g(\Psi_n))=\int_{\bR}g(t)dF_n(t)
\end{equation}
where the integral is a Riemann-Stieltjes integral. It is thus clear
that if we can control the distribution $F_n$, we have a very sharp
understanding of the probability to have small deviations (of order
$\sqrt n$) from the limit.

This can be achieved in various ways. In the following, I choose to compute the {\em characteristic function} 
\[
\vf_n(\lambda)=\int_{\bR}e^{i\lambda t}dF_n(t)
\]
of the distribution $F_n$ since this provides the strongest results, but see \cite{Li1} for a softer approach or \cite{Dosp, DeL0} for a more general approach. 

The  characteristic function determines the distribution via the formula
\begin{equation}\label{eq:inv}
F_n(b)-F_n(a)=\lim_{\Lambda\to\infty}\frac
1{2\pi}\int_{-\Lambda}^\Lambda\frac{e^{-ia\lambda}-e^{-ib\lambda}}{i\lambda}
\vf_n(\lambda)d\lambda,
\end{equation}
as can be seen in any basic book of probability theory, e.g. \cite{V1, V2}.
In the case when there exists a density, that is an $L^1$ function $f_n$
such that $F_n(b)-F_n(a)=\int_a^bf_n(t)dt$, then the formula above
becomes simply
\begin{equation}\label{eq:inv1}
f_n(y)=\frac
1{2\pi}\int_\bR e^{-iy\lambda}\vf_n(\lambda)d\lambda,
\end{equation}
and follows trivially from the inversion of the Fourier transform.

Recalling \eqref{eq:palle1}, we can thus start to compute
\begin{equation}\label{eq:char1}
\begin{split}
\vf_n(\lambda)&=\int_{\bT^1}e^{i\lambda \Psi_n(x)}h_*(x)dx\\
&=\int_{\bT^1}e^{i\frac \lambda{\sqrt n}\sum_{k=0}^{n-2}\hat \vf\circ f^k}\circ f(x)\cdot e^{i\frac \lambda{\sqrt n}\vf(x)} h_*(x)dx\\
&=\int_{\bT^1}e^{i\frac \lambda{\sqrt n}\sum_{k=0}^{n-2}\hat \vf\circ f^k(x)}\cdot \cL\left(e^{i\frac \lambda{\sqrt n}\vf} h_*\right)(x)dx.
\end{split}
\end{equation}

It is then natural to define, for each $\nu\in\bR$, the operator
\begin{equation}\label{eq:op-l}
\cL_\nu  h(x)=\left[\cL\left(e^{i\nu\vf}h\right)\right](x).
\end{equation}
This idea is due to Nagaev and Guivarch \cite{Na57,Guivarch}.
Using such an operator we can rewrite \eqref{eq:char1} as
\begin{equation}\label{eq:char2}
\begin{split}
\vf_n(\lambda)&=\int_{\bT^1}e^{i\frac \lambda{\sqrt n}\sum_{k=0}^{n-2}\hat \vf\circ f^k(x)}\cdot \cL_{\frac \lambda{\sqrt n}}\left( h_*\right)(x)dx\\
&=\int_{\bT^1} \cL_{\frac \lambda{\sqrt n}}^n\left( h_*\right)(x)dx,
\end{split}
\end{equation}
where the last line is obtained by iterating the previous arguments.

To conclude we must understand the growth of $\cL_{\frac \lambda{\sqrt n}}^n$. That is, we want to understand the spectrum of the operators $\cL_\nu$ for moderately large $\nu$. Since for $\nu=0$ we know the spectrum we can start by applying perturbation theory.
\begin{lem}\label{lem:pert1} 
There exists $\nu_0, C_0>0$ and  $\xi\in (0,1)$ such that, for all $\nu\in [0,\nu_0]$, we can write $\cL_\nu=\lambda_\nu\Pi_\nu+Q_\nu$ where all the quantities are analytic in $\nu$ and 
\[
\begin{split}
&\Pi_\nu(\vf)=h_\nu\ell_\nu(\vf)\;;\quad\ell_\nu(h_\nu)=1\\
&|\lambda_\nu-1- \frac 12\sigma^2\nu^2|\leq C_0\nu^3\\
&\left\|\Pi_\nu-\Pi_0-\nu\sum_{k=0}^\infty \cL_0^k(\Id-\Pi)\cL'_0\Pi
+\nu\sum_{k=0}^\infty \Pi\cL'_0(\Id-\Pi)\cL_0^k(\Id-\Pi)\right\|_{W^{1,1}}\leq C_0\nu^2\\
&\sigma^2=\int_{\bT}\hat \vf(x)^2h_*(x) dx+2\sum_{k=1}^{\infty}\int_{\bT}\hat \vf\circ f^{k}(x) \cdot \hat \vf(x)\cdot h_*(x) dx\\
&\|Q_\nu^n\|_{W^{1,1}}\leq C_0\xi^n,
\end{split}
\]
where we have used $\;'$ for the derivative with respect to $\nu$ and set $\cL=\cL_0$, $\Pi=\Pi_0$.

In addition, $\sigma=0$ iff there exists $g\in\cC^0(\bT,\bR)$ such that $\hat\vf=g-g\circ f$ (i.e., {\em $\hat\vf$ is a continuous coboundary}).
\end{lem}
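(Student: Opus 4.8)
The plan is to regard $\nu\mapsto\cL_\nu$ as a real-analytic family of bounded operators on $W^{1,1}$, apply Kato's analytic perturbation theory around $\nu=0$, and then compute the first two Taylor coefficients by hand; the coboundary dichotomy is an essentially separate elementary argument at the end. First I would check analyticity and fix the spectral picture. Since $\hat\vf\in\cC^1$, the series $e^{i\nu\hat\vf}=\sum_{m\ge0}(i\nu)^m\hat\vf^m/m!$ converges in $\cC^1$ on any complex disc, multiplication by a $\cC^1$ function is bounded on $W^{1,1}$, and $\cL$ is bounded on $W^{1,1}$ by \eqref{eq:LY}; hence $\nu\mapsto\cL_\nu$ is analytic with $\cL_0=\cL$, $\cL'_0 h=\cL(i\hat\vf\,h)$ and $\cL''_0 h=-\cL(\hat\vf^2 h)$. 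By the discussion after Theorem \ref{thm:hennion}, $\cL_0$ on $W^{1,1}$ is quasi-compact, $1$ is a simple eigenvalue, and the rest of $\sigma_{W^{1,1}}(\cL_0)$ lies in a disc of radius $\xi_0<1$. Fixing a circle $\gamma$ isolating $1$, upper semicontinuity of the spectrum gives $\nu_0>0$ with exactly one eigenvalue of $\cL_\nu$ inside $\gamma$ for $|\nu|\le\nu_0$; then $\Pi_\nu=\frac1{2\pi i}\oint_\gamma(z-\cL_\nu)^{-1}\,dz$ is an analytic rank-one family of projections, $\lambda_\nu=\Tr(\cL_\nu\Pi_\nu)$ the associated simple eigenvalue, and writing $\Pi_\nu=h_\nu\otimes\ell_\nu$ with the normalisation $\ell_\nu(h_\nu)=1$ (e.g. $h_\nu=\Pi_\nu h_*/\Leb(\Pi_\nu h_*)$, legitimate as $\Leb(\Pi_\nu h_*)\to1$) yields all the analytic objects, with $h_0=h_*$ and $\ell_0=\Leb$. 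For $Q_\nu=\cL_\nu(\Id-\Pi_\nu)$ I would fix $\xi\in(\xi_0,1)$, shrink $\nu_0$ so that $\sigma(Q_\nu)\subset\{|z|<\xi\}$, and write $Q_\nu^n=\frac1{2\pi i}\oint_{|z|=\xi}z^n(z-Q_\nu)^{-1}\,dz$; the bound $\|Q_\nu^n\|_{W^{1,1}}\le C_0\xi^n$ then follows from the bound on $\|(z-Q_\nu)^{-1}\|$, which is uniform on the compact set $\{|\nu|\le\nu_0\}\times\{|z|=\xi\}$ by continuity of the resolvent.

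Next I would extract the expansions. Differentiating $\cL_\nu h_\nu=\lambda_\nu h_\nu$ and $\ell_\nu(h_\nu)=1$ at $\nu=0$ and applying $\Leb$ gives $\lambda'_0=\Leb(\cL'_0 h_*)=i\mu(\hat\vf)=0$, so $h'_0$ solves $(\Id-\cL_0)h'_0=\cL'_0 h_*$ and may be taken in $\bV=\{\Leb=0\}$ as $h'_0=\sum_{k\ge0}\cL_0^k\cL'_0 h_*=i\sum_{k\ge1}\cL^k(\hat\vf h_*)$ (convergent since $\cL_0$ contracts $\bV$). Differentiating once more, $\lambda''_0=\Leb(\cL''_0 h_*)+2\Leb(\cL'_0 h'_0)$; plugging in $h'_0$ and using the duality $\int g\,\cL^k w\,dx=\int(g\circ f^k)w\,dx$ collapses this to $\lambda''_0=-\int\hat\vf^2 h_*\,dx-2\sum_{k\ge1}\int(\hat\vf\circ f^k)\hat\vf\,h_*\,dx=-\sigma^2$, which is the stated quadratic behaviour of $\lambda_\nu$, with $\sigma^2$ precisely the variance in \eqref{eq:var}. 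For $\Pi'_0$ I would expand $(z-\cL_\nu)^{-1}=(z-\cL_0)^{-1}+\nu(z-\cL_0)^{-1}\cL'_0(z-\cL_0)^{-1}+O(\nu^2)$ inside the contour integral, split $(z-\cL_0)^{-1}=\Pi_0/(z-1)+\widetilde R(z)$ with $\widetilde R$ holomorphic near $1$ and $\widetilde R(1)=N:=\sum_{k\ge0}\cL_0^k(\Id-\Pi_0)$, and pick up the residues at $z=1$ to get $\Pi'_0=\Pi_0\cL'_0 N+N\cL'_0\Pi_0$; inserting the commuting factors $\Id-\Pi_0$ (using $\cL_0^k(\Id-\Pi_0)=(\Id-\Pi_0)\cL_0^k$) rewrites this as the two sums displayed in the statement.

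Finally I would treat the coboundary dichotomy. If $\hat\vf=g-g\circ f$ with $g\in\cC^0$, then $\sum_{k=0}^{n-1}\hat\vf\circ f^k=g-g\circ f^n$ is bounded in $L^\infty\subset L^2(\mu)$, so dividing \eqref{eq:var} by $n$ and letting $n\to\infty$ gives $\sigma^2=0$. Conversely, assume $\sigma=0$. I would first note that $h_*>0$ everywhere: $h_*\in W^{1,1}\subset\cC^0$, and a zero of $h_*$ would, through $\cL h_*=h_*$, propagate to the (dense, by transitivity) backward orbit of that point, contradicting $\int h_*=1$. Since $\hat\vf\in\cC^1$ and $h_*\in W^{1,1}$ we have $\hat\vf h_*\in\bV\cap W^{1,1}$, so $g:=-h_*^{-1}\sum_{k\ge1}\cL^k(\hat\vf h_*)$ is a genuine $\cC^0$ function (the series converges in $W^{1,1}\subset\cC^0$ since $\cL_0$ contracts $\bV$). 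A direct computation using $\cL((u\circ f)w)=u\,\cL w$ and $\cL h_*=h_*$ shows that $\rho:=\hat\vf-g+g\circ f$ satisfies $\cL(\rho h_*)=0$ and $\int\rho\,h_*\,dx=0$; since $\hat\vf-\rho$ is a coboundary the asymptotic variances coincide, and since $\cL^k(\rho h_*)=0$ for all $k\ge1$ every cross term in \eqref{eq:var} computed with $\rho$ in place of $\hat\vf$ vanishes, so $0=\sigma^2=\int\rho^2 h_*\,dx$; as $h_*>0$ this forces $\rho\equiv0$, i.e. $\hat\vf=g-g\circ f$.

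The hard part is the uniform bound $\|Q_\nu^n\|_{W^{1,1}}\le C_0\xi^n$: for each fixed $\nu$ it is immediate from $r(Q_\nu)<1$, but the uniformity over $\nu\in[0,\nu_0]$ — which is exactly what the CLT argument later needs when $\nu=\lambda/\sqrt n$ varies with $n$ — has to be obtained from the compactness/continuity argument for the resolvents sketched in the first step, not from a pointwise-in-$\nu$ spectral radius estimate. The only other genuinely non-routine point is the $\sigma=0\Rightarrow$ coboundary implication, where it matters that the smoothness gain makes $\hat\vf h_*\in W^{1,1}$ (so that $g$ is truly continuous, not merely $L^2$) and that $h_*$ is uniformly bounded away from $0$; everything else is bookkeeping inside the perturbation series.
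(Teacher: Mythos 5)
Your argument is correct and follows the same overall scaffolding as the paper (analytic perturbation theory via Kato, extraction of the spectral data, Taylor coefficients, coboundary dichotomy), but it departs from the paper in the computational details in two instructive ways.

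For the Taylor coefficients, the paper works entirely with projector algebra: it differentiates the identities $\Pi_\nu^2=\Pi_\nu$, $\cL_\nu\Pi_\nu=\lambda_\nu\Pi_\nu$, multiplies by $\Pi_\nu$ or $(\Id-\Pi_\nu)$ from the left and right to isolate the various blocks, and deduces \eqref{eq:pitris} and \eqref{eq:lambda2}. You instead differentiate the eigenvalue equation $\cL_\nu h_\nu=\lambda_\nu h_\nu$ and the normalisation $\ell_\nu(h_\nu)=1$ directly, solving $(\Id-\cL_0)h'_0=\cL'_0 h_*$ by the Neumann series on $\bV$; and you obtain $\Pi'_0$ by expanding the contour integral for the Riesz projector. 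The two routes are algebraically equivalent ($N=\sum_k\cL_0^k(\Id-\Pi_0)$ is exactly the reduced resolvent $\widetilde R(1)$), but your eigenvector-based computation of $\lambda''_0$ is more concrete and arrives at $-\sigma^2$ a bit more transparently, while the paper's projector calculus is what generalises to situations where one cannot easily track a distinguished eigenvector. (Note in passing: the displayed bound on $\Pi_\nu$ in the statement has a sign slip — $\Pi'_0$ is the \emph{sum} of the two series, as both your contour argument and the paper's \eqref{eq:pitris} show; your phrase ``the two sums displayed in the statement'' glosses over this, but it is the paper's typo.)

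For the coboundary dichotomy your route is genuinely different and arguably cleaner. The paper assumes $\sigma=0$, notes that $\|S_n\|_{L^2(\mu)}$ stays bounded, extracts a weak $L^2$ accumulation point $g$ of the Birkhoff sums, verifies the cohomological equation in $L^2$, and only then upgrades $g$ to $\cC^0$ by applying $\cL$ and inverting $(\Id-\cL)$ on $\bV$. You simply define $g:=-h_*^{-1}\sum_{k\ge1}\cL^k(\hat\vf h_*)$ (which is the same function the paper ultimately produces, and is automatically in $W^{1,1}\subset\cC^0$), set $\rho:=\hat\vf-g+g\circ f$, observe the exact identity $\cL(\rho h_*)=0$, and conclude from the variance formula that $\int\rho^2 h_*=\sigma^2=0$, hence $\rho\equiv0$. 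This avoids the soft compactness step entirely and makes the continuity of $g$ immediate. Your positivity argument for $h_*$ (propagating a zero along the dense set of preimages) is also correct and is the standard replacement for what the paper leaves implicit. In short: the proof is sound, and the coboundary half is a nicer derivation than the one in the text.
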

\begin{proof}
The spectral decomposition  $\cL_\nu=\lambda_\nu\Pi_\nu+Q_\nu$, its analyticity and the bound on $Q_\nu$ follow by standard perturbation theory, e.g. see \cite{kato}. Moreover, $\Pi_\nu^2=\Pi_\nu$, $\cL_\nu\Pi_\nu=\Pi_\nu\cL_\nu=\lambda_\nu\Pi_\nu$ and $\Pi_\nu Q_\nu=Q_\nu\Pi_\nu=0$. Recall that $\lambda_0=1$ and $\Pi_0=\Leb\otimes h_*$.

Next, we must Taylor expand in $\nu$ the various objects. First of all note that, since the projector $\Pi_0=h_*\otimes \Leb$ is a rank one operator, so is the projector $\Pi_\nu$. Hence, there exists a unique $h_\nu$, $\int_\bT h_\nu(x)dx=1$, in the range of $\Pi_\nu$. Next, chose $\ell_\nu\in (W^{1,1})'$ to have the same kernel as $\Pi_\nu$ and normalise it so that $\ell_\nu(h_\nu)=1$, it follows that $\Pi_\nu(\vf)=h_\nu\ell_\nu(\vf)$. Moreover, 
\[
\cL'_\nu\Pi_\nu+\cL_\nu\Pi'_\nu=\lambda'_\nu\Pi_\nu+\lambda_\nu\Pi_\nu'.
\]
Multiplying by $\Pi_\nu$ from the left, yields
\begin{equation}\label{eq:lambda1}
\lambda'_\nu\Pi_\nu=\Pi_\nu\cL'_\nu\Pi_\nu=\ell_\nu(\cL'_\nu h_\nu)\Pi_\nu
\end{equation}
which, since $\cL'_\nu h=\cL_\nu(i\hat \vf h)$, gives
\[
\lambda'_\nu=i\lambda_\nu\ell_\nu(\hat \vf h_\nu)
\]
and, in particular, $\lambda'_0=0$.

Next, setting $\widehat \cL_\nu=\lambda_\nu^{-1}\cL_\nu$, we have
\[
(\Id-\lambda_\nu^{-1}Q_\nu)(\Id-\Pi_\nu)\Pi_\nu'=(\Id-\widehat\cL_\nu)\Pi_\nu'=\lambda_\nu^{-1}\left[\cL'_\nu\Pi_\nu-\lambda'_\nu\Pi_\nu\right]
=\lambda_\nu^{-1}(\Id-\Pi_\nu)\cL'_\nu\Pi_\nu
\]
which implies 
\begin{equation}\label{eq:pi1}
(\Id-\Pi_\nu)\Pi'_\nu=\lambda_\nu^{-1}\sum_{k=0}^\infty \lambda_\nu^{-k}Q_\nu^k(\Id-\Pi_\nu)\cL'_\nu\Pi_\nu=\lambda_\nu^{-1}\sum_{k=0}^\infty \widehat\cL_\nu^k(\Id-\Pi_\nu)\cL'_\nu\Pi_\nu.
\end{equation}
Note that the above estimates imply that there exists $\nu_0>0$ such that the series is convergent for all $\nu\leq \nu_0$.
Analogously, from $\Pi_\nu\cL_\nu=\lambda_\nu\Pi_\nu$ we obtain
\begin{equation}\label{eq:pi1bis}
\Pi'_\nu(\Id-\Pi_\nu)=\lambda_\nu^{-1}\sum_{k=0}^\infty \Pi_\nu\cL'_\nu(\Id-\Pi_\nu)\widehat\cL_\nu^k(\Id-\Pi_\nu).
\end{equation}
Noticing that $\Pi_\nu'\Pi_\nu+\Pi_\nu\Pi_\nu'=\Pi_\nu'$, that is
\[
\Pi_\nu'\Pi_\nu=(\Id-\Pi_\nu)\Pi_\nu',
\]
implies $\Pi_\nu\Pi_\nu'\Pi_\nu=0$ and $(\Id-\Pi_\nu)\Pi_\nu'(\Id-\Pi_\nu)=0$. We can then write
\begin{equation}\label{eq:pitris}
\begin{split}
\Pi_\nu'&=\Pi_\nu\Pi_\nu'\Pi_\nu+(\Id-\Pi_\nu)\Pi_\nu'\Pi_\nu+\Pi_\nu\Pi_\nu'(\Id-\Pi_\nu)+(\Id-\Pi_\nu)\Pi_\nu'(\Id-\Pi_\nu)\\
&=(\Id-\Pi_\nu)\Pi_\nu'\Pi_\nu+\Pi_\nu\Pi_\nu'(\Id-\Pi_\nu)\\
&=\lambda_\nu^{-1}\sum_{k=0}^\infty \widehat\cL_\nu^k(\Id-\Pi_\nu)\cL'_\nu\Pi_\nu
+\lambda_\nu^{-1}\sum_{k=0}^\infty \Pi_\nu\cL'_\nu(\Id-\Pi_\nu)\widehat\cL_\nu^k(\Id-\Pi_\nu).
\end{split}
\end{equation}
Finally, differentiating \eqref{eq:lambda1}, we have
\[
 \lambda''_\nu\Pi_\nu+ \lambda'_\nu\Pi'_\nu=\Pi_\nu'\cL'_\nu\Pi_\nu+\Pi_\nu\cL''_\nu\Pi_\nu+\Pi_\nu\cL'_\nu\Pi_\nu'
\]
which, multiplying both from left and right by $\Pi_\nu$ yields
\[
\begin{split}
\lambda''_\nu\Pi_\nu&=\Pi_\nu\Pi_\nu'\cL'_\nu\Pi_\nu+\Pi_\nu\cL''_\nu\Pi_\nu+\Pi_\nu\cL'_\nu\Pi_\nu'\Pi_\nu\\
&=\Pi_\nu\Pi_\nu'(\Id-\Pi_\nu)\cL'_\nu\Pi_\nu+\Pi_\nu\cL''_\nu\Pi_\nu+\Pi_\nu\cL'_\nu(\Id-\Pi_\nu)\Pi_\nu'\Pi_\nu.
\end{split}
\]
hence,
\begin{equation}\label{eq:lambda2}
\lambda''_\nu=\ell_\nu\big(\Pi_\nu'(\Id-\Pi_\nu)\cL'_\nu h_\nu+\cL''_\nu h_\nu+\cL'_\nu(\Id-\Pi_\nu)\Pi'_\nu h_\nu\big).
\end{equation}
From the above and equations \eqref{eq:pi1}, \eqref{eq:pi1bis} it follows
\[
\lambda_0''=-\int_{\bT}\hat\vf(x)^2 h_*(x) dx-2\sum_{k=1}^\infty \int_{\bT} \hat\vf\circ f^{k}(x)\hat\vf(x) h_*(x) dx.
\]
Note that \eqref{eq:var} implies that $-\sigma^2=\lambda_0''<0$, thus $\sigma$ is well defined.
We are left with the task of investigating the case $\sigma=0$. Equation \eqref{eq:var} implies that if $\sigma=0$, then $\left\|\sum_{k=0}^{n-1} \hat\vf\circ f^k(x)\right\|_{L^2(\mu)}$ is uniformly bounded in $n$. Accordingly it admits weakly convergent subsequences in $L^2$. Let $g\in L^2$ be an accumulation point, then for each $h\in W^{1,1}$ we have
\[
\begin{split}
\int g \circ f\cdot h \cdot h_*&=\lim_{j\to \infty} \int \sum_{k=0}^{n_j-1} \hat\vf\circ f^k\cdot \cL( h \cdot h_*)=\lim_{j\to \infty} \int \sum_{k=1}^{n_j} \hat\vf\circ f^k\cdot h \cdot h_*\\
&=-\int \hat \vf\cdot  h\cdot  h_*+\lim_{j\to \infty} \int \sum_{k=0}^{n_j-1} \hat\vf\circ f^k\cdot h\cdot  h_*+\int \hat\vf \cL^{n_j}(h\cdot h_*)\\
&=-\int \hat \vf\cdot  h\cdot h_*+\int g \cdot h \cdot h_*.
\end{split}
\]
Since $W^{1,1}$ is dense in $L^2$ it follows
\[
\hat\vf h_*=g h_*-g\circ f h_*,
\]
where, without loss of generality, we can assume $\int g h_*=0$.

It remains to prove that $g\in\cC^0$, this follows from Livsic theory \cite{Livs1, Livs2} but let me provide a simple direct argument: Applying $\cL$ to the last equation yields
\[
\cL\hat\vf h_*=-(\Id-\cL)g h_*.
\]
Since the above equation can be restricted to the space of zero average functions and  $h_*>0$ we can write
\[
g=-\frac{1}{h_*}(\Id-\cL)^{-1}\cL\hat\vf h_*.
\]
The claim follows recalling that $W^{1,1}\subset \cC^0$.
\end{proof}
The next result is not really used in the following but it is necessary to prove the Local Central Limit Theorem and it gives an idea of how the control on larger $\lambda$ allows to obtain sharper results on the limiting distribution.
\begin{lem}\label{lem:pert2}
For each $\nu\neq 0$ we have that the essential spectrum of $\cL_\nu$ acting on $W^{1,1}$ is contained in $\{z\in\cC\;:\; |z|\leq \lambda_*^{-1}\}$ and $\sigma_{W^{1,1}}(\cL_\nu)\subset \{z\in\cC\;:\; |z|<1\}$ provided $\hat\vf$ is not a continuous coboundary.
\end{lem}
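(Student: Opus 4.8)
The plan is to first obtain quasi-compactness of $\cL_\nu$ by copying the treatment of $\cL$, and then to rule out eigenvalues on the unit circle by a convexity-plus-degree argument. First I would establish a Lasota--Yorke inequality for $\cL_\nu$: since $|e^{i\nu\vf}|\equiv 1$ we have $|\cL_\nu^n h|\le\cL^n|h|$ pointwise, hence $\|\cL_\nu^n h\|_{L^1}\le\|h\|_{L^1}$; and differentiating $\cL_\nu h=\cL(e^{i\nu\vf}h)$ through \eqref{eq:one-der}, using $(e^{i\nu\vf}h)'=e^{i\nu\vf}(h'+i\nu\vf'h)$, gives a one-step bound that iterates to
\[
\|(\cL_\nu^n h)'\|_{L^1}\le\lambda_*^{-n}\|h'\|_{L^1}+C_\nu\|h\|_{L^1},
\]
with $C_\nu$ independent of $n$ (depending on $\nu$ and $\|\vf\|_{\cC^1}$). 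With Remark~\ref{rem:compact-ball} (the unit ball of $W^{1,1}$ is relatively compact in $L^1$), Theorem~\ref{thm:hennion} with $M=1$ then gives at once $\sigma_{W^{1,1}}(\cL_\nu)\subset\{|z|\le1\}$ with essential spectral radius $\le\lambda_*^{-1}$ — the first assertion — and reduces the second assertion to excluding eigenvalues $z$ with $|z|=1$, since the spectrum outside $\{|z|\le\lambda_*^{-1}\}$ consists of isolated eigenvalues of finite multiplicity and $\sigma_{W^{1,1}}(\cL_\nu)$ is compact.

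So suppose $\cL_\nu h=zh$ with $h\in W^{1,1}\setminus\{0\}$ and $|z|=1$; note $h\in\cC^0$ and $\cL_\nu h\in\cC^0$ (finitely many smooth inverse branches), so the equation holds everywhere. From $|h|=|\cL_\nu h|\le\cL|h|$ and $\int(\cL|h|-|h|)=0$ I get $\cL|h|=|h|$, so by uniqueness of the nonnegative fixed point of $\cL$ (and $h\ne0$), $|h|=c\,h_*$; rescale so that $|h|=h_*$. Since $h_*>0$ everywhere (a fixed point of $\cL$ vanishing somewhere vanishes on the dense set $\bigcup_nf^{-n}(\mathrm{pt})$, hence identically) and continuous, $\omega:=h/h_*\in\cC^0(\bT,S^1)$ is well defined and $h=h_*\omega$. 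Writing the eigenvalue equation at $x$ via $\cL h_*=h_*$, with $p_y(x):=h_*(y)/(f'(y)h_*(x))>0$ so that $\sum_{f(y)=x}p_y(x)=1$, it becomes
\[
\sum_{f(y)=x}p_y(x)\,e^{i\nu\vf(y)}\omega(y)=z\,\omega(x),
\]
a convex combination of unit vectors whose value is again a unit vector; by strict convexity of the closed disc all the summands coincide, and their common value is $z\omega(x)$, so $e^{i\nu\vf(y)}\omega(y)=z\,\omega(f(y))$ for every $y\in\bT$, i.e. $e^{i\nu\vf}=z\,(\omega\circ f)/\omega$.

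Finally I would compare topological degrees of the two sides of $e^{i\nu\vf}=z\,(\omega\circ f)/\omega$ as maps $\bT\to S^1$: the left side has degree $0$ (it is the exponential of a continuous real function on $\bT$), while the right side has degree $d(d_f-1)$ with $d=\deg\omega$ and $d_f=\deg f$; as $f'$ has constant sign and $|f'|\ge\lambda_*>1$ we get $|d_f|=\int_\bT|f'|\ge\lambda_*>1$, so $d_f\ne1$ and hence $d=0$. Thus $\omega=e^{i\theta}$ for some $\theta\in\cC^0(\bT,\bR)$, and $\nu\vf-\arg z-(\theta\circ f-\theta)$ is a continuous $2\pi\bZ$-valued function, hence a constant; so $\vf=c+g\circ f-g$ with $g:=\theta/\nu$ (this is where $\nu\ne0$ enters) and $c$ a constant. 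Integrating against $\mu$ gives $c=\mu(\vf)$, hence $\hat\vf=g\circ f-g$, i.e. $\hat\vf=\tilde g-\tilde g\circ f$ with $\tilde g=-g\in\cC^0$, a continuous coboundary — contradicting the hypothesis. Therefore $\sigma_{W^{1,1}}(\cL_\nu)$ meets $\{|z|=1\}$ in nothing, and being compact and contained in $\{|z|\le1\}$ it lies in $\{|z|<1\}$.

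The main obstacle is the second half: extracting the pointwise identity $e^{i\nu\vf}\omega=z\,(\omega\circ f)$ from the eigenvalue equation — this rests on strict convexity of the disc together with the fact that $\cL h_*=h_*$ exhibits $h_*(x)$ as a positive weighted sum over the preimages of $x$ — and then the topological step showing $\omega$ admits a continuous logarithm precisely because $d_f\ne1$. Once $\omega=e^{i\theta}$ is in hand, unwinding the phase to recover the coboundary relation for $\hat\vf$ is routine; alternatively, the regularity of the resulting $g$ could be obtained from the Livsic-type argument already used in the proof of Lemma~\ref{lem:pert1}.
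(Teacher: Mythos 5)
Your proof is correct, and it diverges from the paper's argument in the key second half, in a way that is worth noting. The first half (Lasota--Yorke for $\cL_\nu$, Hennion, and the step $|h|\le\cL|h|\Rightarrow\cL|h|=|h|\Rightarrow|h|=h_*$) coincides with the paper. For the cohomology identity $e^{i\nu\vf(y)}\omega(y)=z\,\omega(f(y))$, the paper proceeds by rewriting $\cL h_*=h_*=\cL\bigl(e^{-i\theta}\tfrac{\omega}{\omega\circ f}e^{i\nu\hat\vf}h_*\bigr)$, taking the real part, integrating against Lebesgue and invoking $1-\cos\ge 0$ to force the phase to vanish a.e., whereas you use strict convexity of the unit disc applied to the Perron--Frobenius weights $p_y(x)=h_*(y)/(f'(y)h_*(x))$; these are two equivalent devices for the same upgrade, with yours avoiding the a.e.-then-continuity step. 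The more substantive difference is the next step: the paper simply \emph{asserts} that $\omega=h_\nu/h_*$ can be written as $e^{i\alpha_\nu}$ with $\alpha_\nu\in\cC^0(\bT,\bR)$, which is not automatic for a continuous map $\bT\to S^1$; it requires $\deg\omega=0$. Your topological-degree computation ($\deg\omega\cdot(\deg f-1)=0$, and $\deg f\ne 1$ because $|\deg f|=\int_\bT|f'|\ge\lambda_*>1$ forces $|\deg f|\ge2$) supplies exactly the justification the paper glosses over, and it works whether or not $f$ is orientation preserving. So your route is not only valid but also fills in a detail the paper leaves implicit; once $\omega=e^{i\theta}$ is established, unwinding $\nu\hat\vf=\theta\circ f-\theta$ and dividing by $\nu$ matches the paper's conclusion.
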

\begin{proof}
Since 
\[
\begin{split}
&\|\cL_\nu h\|_{L^1}\leq \|\cL|h|\|_{L^1}\leq \|h\|_{L^1}\\
&\frac d{dx}\cL_\nu h=\cL_{\nu}\left(\frac h{f'}\right)-\cL_\nu\left(\frac{f'' h}{(f')^2}\right)+i\nu\cL(\hat\vf' h)
\end{split}
\]
we have the Lasota-Yorke inequality for the operator $\cL_\nu$. Then Theorem \ref{thm:hennion} implies the inclusion $\sigma_{W^{1,1}}(\cL_\nu)\subset \{z\in\cC\;:\; |z|\leq 1\}$ and that the essential spectral radius is bounded by $\lambda_*^{-1}$. Accordingly the spectral radius can equal one only if it exists $\theta\in\bR$ ad $h\in W^{1,1}$ such that
$\cL_\nu h=e^{i\theta}h$. But then $|h|\leq \cL|h|$ which, integrating yields
\[
0\leq \int \cL|h|(x)-|h|(x) dx=0
\]
that is $\cL|h|=|h|$. Since the eigenvalue one is simple for $\cL$, it must be $h_\nu(x)=e^{i\alpha_\nu(x)}h_*(x)$. As both $h_\nu$ and $h_*>0$ are continuous, it follows that $\alpha_\nu$ can be assumed to be a continuous function without loss of generality. In addition,
\[
\cL h_*(x)=h_*(x)=e^{-i\theta-i\alpha_\nu(x)}\cL_\nu h_\nu(x)=\cL\left(e^{-i\theta-i\alpha_\nu\circ f+i\alpha_\nu+i\nu\hat\vf} h_*\right).
\]
Taking the real part and integrating yields
\[
0=\int_{\bT} \left[1-\cos\left(\theta-\alpha_\nu\circ f(x)+\alpha_\nu(x)+\nu\hat\vf(x)\right)\right]h_*(x) dx
\]
which implies that there exists a function $N:\bT\to \bZ$ such that
\[
\theta-\alpha_\nu\circ f(x)+\alpha_\nu(x)+\nu\hat\vf(x)=2N(x) \pi
\]
Lebesgue almost surely. Hence $N$ must be constant and, taking the average with respect to $\mu$, it follows $2N\pi-\theta=0$. Thus, dividing by $\nu$, we see that $\hat\vf$ is a continuous coboundary.
\end{proof}
Thanks to the above Lemmata we can compute $\vf_n$. For $|\lambda|\leq \nu_0\sqrt n$ we can use Lemma \ref{lem:pert1} and equation \eqref{eq:char2} to write
\begin{equation}\label{eq:clts}
\begin{split}
\vf_n(\lambda)&=e^{-\frac{\sigma^2\lambda^2}{2}+\cO(1/\sqrt n)}+\cO(\xi^n). 
\end{split}
\end{equation}
Next, let $L\geq \nu_0>0$. By Lemma \ref{lem:pert2} we have that the spectral radius of $\cL_{\frac{\lambda}{\sqrt n}}$, for $|\lambda|\in [\nu_0\sqrt n,L\sqrt n]$ is smaller than some $\gamma_L\in (0,1)$.\footnote{ Indeed, the spectral radius is either smaller or equal than $\lambda_*^{-1}$ or it is determined by the point spectrum, and hence varies continuously by standard perturbation theory.} Thus, for $|\lambda|\in [\nu_0\sqrt n,L\sqrt n]$ we have that there exist $C_L>0$ such that
\begin{equation}\label{eq:clts2}
|\vf_n(\lambda)|\leq C_L\gamma_L^n. 
\end{equation}
While it is possible to obtain similar estimates for larger $\lambda$, they are out of the scope of this note (see \cite[Appendix B]{DeL1} for details).
Our estimates do not allow to use \eqref{eq:inv} to compute the distribution $F_n$. This problem can by bypassed in various ways, a simple one is to smooth the density. To this end let  ${\boldsymbol Z}$ be a bounded, independent, zero average random variable so
that $|{\boldsymbol Z}|\le 1$ with smooth density $\psi\in\cC^\infty$.  We can then
consider the random variable $\overline\Psi_{n,\ve}=\Psi_n+\ve {\boldsymbol Z}$ for some $\ve>0$.  The random
variable $\overline\Psi_{n,\ve}$  admits a density, which we denote with $\cN_{n,\ve}$.  In fact, denoting by $\widehat\psi$ the Fourier transform of $\psi$ and using \eqref{eq:inv1}, we have
\[
\begin{split}
\cN_{n,\ve}(y)&=\frac 1{2\pi}\int_{\bR}e^{-i\lambda y}\bE(e^{i\lambda\overline\Psi_n})d\lambda\\
  &=\frac 1{2\pi}\int_{\bR}e^{-i\lambda y}\mu(e^{i\lambda\Psi_n})\widehat\psi(\ve \lambda)d \lambda\\
  &= \frac 1{2\pi}\int_{-\nu_0\sqrt n}^{\nu_0\sqrt n}e^{-i\lambda y}\left[e^{-\frac{\sigma^2\lambda^2}{2}+\cO(1/\sqrt n)}+\cO(\xi^n)\right]\widehat\psi(\ve \lambda)d \lambda\\
  &\phantom{=}+\cO(C_L\gamma_L^n)+\frac 1{2\pi}\int_{|\lambda|\geq L\sqrt n}e^{-i\lambda y}\mu(e^{i\lambda\Psi_n})\widehat\psi(\ve \lambda)d \lambda.
\end{split}
 \]
 To conclude note that, for all $p\in\bN$, $|\widehat\psi(\nu)|\leq C_p \|\psi\|_{\cC^{p+2}}|\nu|^{-p}$ for some $C_p>0$. As an example let us choose $p=4$. Thus, there exists $n_L\in\bN$ such that, for all  $n\geq n_L$,
 \[
\cN_{n,\ve}(y)=\frac{1}{\sigma\sqrt{2\pi}}e^{-\frac{y^2}{2\sigma^2}}+\cO(\frac 1{\sqrt n}+\frac1{\ve^4 L^3 n^{3/2}}).
 \]
 To conclude note that
 \[
 \bP(\overline\Psi_{n,\ve}\in [a+\ve,b-\ve])\leq  \bP(\Psi_{n}\in [a,b])\leq  \bP(\overline\Psi_{n,\ve}\in [a-\ve,b+\ve]).
 \]
 Hence, calling $\bP_{\cG_\sigma}$ the probability distribution of a Gaussian random variable of zero average and variance $\sigma$, we have
 \[
 \begin{split}
 \bP(\Psi_{n}\in [a,b])&\leq \int_{a-\ve}^{b+\ve}\frac{1}{\sigma\sqrt{2\pi}}e^{-\frac{y^2}{2\sigma^2}}dy+|b-a|\cO\left(\frac 1{\sqrt n}+\frac1{\ve^4 L^3 n^{3/2}}\right)\\
&\leq \bP_{\cG_\sigma}([a,b])\left(1+\cO\left(\frac{\ve}{|b-a|}\right)\right)+|b-a|\cO\left(\frac 1{\sqrt n}+\frac1{\ve^4 L^3 n^{3/2}}\right)
\end{split}
 \]
 Arguing similarly for the lower bond and choosing, for example $\ve=n^{-\frac 14}$ and $L=1$ we have, for some $C>0$
 \[
 \left|\bP(\Psi_{n}\in [a,b])-\bP_{\cG_\sigma}([a,b])\right|\leq C\left( n^{-\frac 14}+\frac {|b-a|}{\sqrt n}\right)
 \]
 which gives a non trivial bound for all $a,b\leq \Const \ln n$ and $|b-a|\geq \Const n^{-\frac 14}$ .
The above means that, if the precision of the instrument
is compatible with the statistics, the typical fluctuations in
measurements are of order $\frac 1{\sqrt n}$ and Gaussian. This is well
known by experimentalists who routinely assume that the result of
a measurement is distributed according to a Gaussian.\footnote{Note
however that our proof holds in a very special case that has little to
do with a real experimental setting. To prove the analogous statement
in for a realistic experiment is a completely different ball game.} 
\begin{rem} Note that, if we are not interested in the rate of convergence, then the information that we obtained on the spectral properties of $\cL_\nu$ suffice to prove the Local Limit Theorem.\footnote{ One must use the usual trick to prove the Theorem first for functions with compactly supported Fourier transform and then extend the result by density.}
\end{rem}

\subsection{Perturbation theory}\label{sec:perturb}

Another natural question is: how do the statistical properties of a system depend on small changes in the system? 

Indeed, in real life situations the dynamics is known only with finite precision, hence it is fundamental to know how small changes in the dynamics affects the asymptotic properties of the system.

To answer such a  question we need some type of perturbation
theorem. Several such results are available (e.g., see \cite{Kifer1},
\cite{Vi1} for a review and \cite{BalYo} for some more recent results), here
we will follow mainly the theory developed in \cite{KL} adapted to the
special cases at hand. 

We will start by considering an abstract family of operators $\cL_\ve$
satisfying the following properties.
 
\begin{hyp}\label{cond:pert}
Given two Banach spaces as in Theorem \ref{thm:hennion}, 
consider a family of operators $\cL_\ve\in L(\cB,\cB)$, $\ve\in [0,1]$, with the following properties
\begin{enumerate}
\item Uniform Lasota-Yorke inequality: for all $\ve\in [0,1]$
\[
\|\cL_\ve^n h\|_{\cB}\leq C\lambda^{-n}\|h\|_{\cB}+C\|h\|_{{\cB_w}},\quad
\|\cL_\ve^n h\|_{{\cB_w}}\leq  C\|h\|_{{\cB_w}}\;;
\]
\item $\int \cL_\ve h(x)dx=\int h(x) dx$\;;
\item For $L:\cB\to\cB$ define the norm
\[
|||L|||:=\sup\limits_{\|h\|_{\cB}\leq 1}\|L f\|_{{\cB_w}},
\]
that is the norm of $L$ as an operator from $\cB\to {\cB_w}$. Then there exists $D>0$ such that
\[
|||\cL_0-\cL_\ve|||\leq D\ve.
\]
\end{enumerate}
\end{hyp}

Hypotheses (3) specifies in which sense the family $\cL_\ve$ can be
considered as an approximation of the unperturbed operator $\cL:=\cL_0$. Notice
that the condition is rather weak, in particular the distance between
$\cL_\ve$ and $\cL$ as operators on $\cB$ can be always larger than
$1$. Such a notion of closeness is completely inadequate to apply
standard perturbation theory. To obtain some perturbation results it is
then necessary to restrict the type of perturbations
allowed, this is the content of Hypotheses (1, 2) which state that all the
approximating operators enjoys properties very similar to $\cL$.\footnote{ Actually only Hypotheses (1, 3) are needed in the following.
Hypothesis (2) simply implies that the eigenvalue one is common to all
the operators. If  Hypothesis (2) is not assumed, then the operator $\cL_\ve$ will
always have one eigenvalue close to one, but the spectral radius could
vary slightly, see \cite{LiM} for such a situation.}

To state a precise result consider, for each bounded operator $L$, the set
\[
S_{\delta,r}(L):=\{z\in\bC\;|\; |z|\leq r \hbox{ or dist}(z,\sigma(L))\leq
\delta\}.
\]
Since the complement of $S_{\delta,r}(L)$ belongs to the
resolvent of $L$ it follows that
\[
H_{\delta, r}(L):=\sup\left\{\|(z-L)^{-1}\|_\cB\;|\; z\in\bC\,\backslash
S_{\delta,r}(L)\right\} <\infty.
\]
By $R(z)$ and $R_\ve(z)$ we will mean respectively
$(z-\cL)^{-1}$ and $(z-\cL_\ve)^{-1}$.
\begin{thm}[\cite{KL}]\label{thm:pert}
Consider a family of operators $\cL_\ve:\cB\to\cB$
satisfying Hypotheses (1-3). Let $H_{\delta,r}:=H_{\delta,r}(\cL)$;
$S_{\delta,r}:=S_{\delta,r}(\cL)$, $r>\lambda^{-1}$, $\delta>0$, then there exist $\ve_0,a>0$ such that, for all $\ve\leq \ve_0$,  
$\sigma(\cL_\ve)\subset S_{\delta,r}(\cL)$ and, for each $z\not\in S_{\delta,r}$,
\[
|||R(z)-R_\ve(z)|||\leq C\ve^a.
\]
\end{thm}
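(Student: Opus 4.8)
The plan is to establish, from the three hypotheses, that resolvents of $\cL_\ve$ converge to the resolvent of $\cL$ in the weak operator norm $|||\cdot|||$ on the resolvent set of $\cL$, and then to use a Neumann-series argument together with the uniform Lasota--Yorke bound to promote this to spectral inclusion. First I would record the basic identity
\[
R_\ve(z)-R(z)=R_\ve(z)(\cL_\ve-\cL)R(z),
\]
valid wherever both resolvents exist. The issue is that $R_\ve(z)$ is not yet known to exist for $z\notin S_{\delta,r}$, so one cannot simply apply $|||\cL_\ve-\cL|||\leq D\ve$ and bound $\|R(z)\|_{\cB}$, $\|R_\ve(z)\|_{\cB}$: the middle operator only gains smallness when read as a map $\cB\to\cB_w$, while $R(z)$ on the right is only controlled on $\cB$, and one still needs $\cB_w\to\cB$ smoothing, which is exactly what the Lasota--Yorke inequality in Hypothesis (1) is designed to supply.

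The key technical step, and the one I expect to be the main obstacle, is a uniform resolvent bound: for $z\notin S_{\delta,r}$ and $\ve$ small, $(z-\cL_\ve)$ is invertible with $\|R_\ve(z)\|_{\cB}\leq C\,H_{\delta,r}$. To get this I would iterate the identity above. Writing $R_\ve(z)=R(z)+R_\ve(z)(\cL_\ve-\cL)R(z)$ and expanding $n$ times gives
\[
R_\ve(z)=\sum_{k=0}^{n-1}\big[R_\ve(z)(\cL_\ve-\cL)\big]^{?}\cdots
\]
but the cleaner route is to use the Lasota--Yorke inequality to control powers: one shows that $(z-\cL_\ve)$ composed with a suitable finite Neumann-type approximant built from $R(z)$ and $(\cL_\ve-\cL)$ is invertible on $\cB$ because the ``error'' operator is, after $n_0$ steps, a contraction on $\cB$. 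Concretely, $\cL_\ve^{n}$ obeys $\|\cL_\ve^{n}h\|_{\cB}\leq C\lambda^{-n}\|h\|_{\cB}+C\|h\|_{\cB_w}$, so iterating the resolvent identity and grouping the powers of $\cL_\ve$ that appear, each occurrence of $(\cL_\ve-\cL)$ is read as $\cB\to\cB_w$ (cost $D\ve$) and is then followed by enough applications of $\cL_\ve$ or $R(z)$ to regain a $\cB$-bound, with the $\lambda^{-n}$ factor making the truncated tail small; choosing $n$ of order $|\log\ve|$ and $\ve$ small makes the error term have $\cB$-norm less than $1$, so a Neumann series converges and yields $R_\ve(z)$ with the asserted uniform bound. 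This simultaneously shows $\sigma(\cL_\ve)\subset S_{\delta,r}(\cL)$.

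Once $\|R_\ve(z)\|_{\cB}$ and $\|R(z)\|_{\cB}$ are both bounded by $\Const H_{\delta,r}$ uniformly in $\ve\leq\ve_0$ on the compact set $z\notin S_{\delta,r}$, I would return to $R_\ve(z)-R(z)=R_\ve(z)(\cL_\ve-\cL)R(z)$. For the final $|||\cdot|||$ estimate, take $h\in\cB$ with $\|h\|_{\cB}\leq 1$; then $R(z)h\in\cB$ with $\cB$-norm $\leq\Const H_{\delta,r}$, so $(\cL_\ve-\cL)R(z)h$ has $\cB_w$-norm $\leq \Const H_{\delta,r} D\ve$. The remaining difficulty is that $R_\ve(z)$ must now be applied to a vector controlled only in $\cB_w$; for this I would again use the Lasota--Yorke smoothing, splitting $R_\ve(z)=\sum_{k\geq 0} z^{-k-1}\cL_\ve^k$ (for $|z|$ large, then extended) or, more robustly, the identity $z R_\ve(z)=\Id+\cL_\ve R_\ve(z)$ iterated to trade a loss in $\cB_w\to\cB$ for a small power of $\ve$: interpolating between the $\cB$-bound (size $\ve^0$) and the clean $\cB_w$ bound gives $|||R_\ve(z)-R(z)|||\leq C\ve^a$ with $a\in(0,1)$ depending on $\lambda$, $\delta$, $r$. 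I expect the honest bookkeeping of how many powers of $\cL_\ve$ are needed versus how many factors of $D\ve$ are harvested — i.e. the optimization producing the exponent $a$ — to be the fiddly heart of the argument, exactly as in \cite{KL}.
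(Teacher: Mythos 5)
The paper does not actually prove this theorem in the text: it cites \cite{KL} and points to \cite[Theorem 3.2]{Li3} for a simple proof, so the relevant comparison is to the Keller--Liverani argument itself. Your sketch reproduces that argument faithfully: the approximate resolvent $\sum_{j<n}z^{-j-1}\cL_\ve^j + z^{-n}\cL_\ve^n R(z)$, the use of the uniform Lasota--Yorke inequality to convert ``small in $\cB_w$, bounded in $\cB$'' into ``small in $\cB$ after $n$ iterates,'' the choice $n\sim |\log\ve|$ balancing $(r\lambda)^{-n}$ against $r^{-n}\ve$ (which both forces $r>\lambda^{-1}$ and produces the exponent $a$), and the resulting Neumann series giving the uniform bound on $R_\ve(z)$ and hence the spectral inclusion and $|||R(z)-R_\ve(z)|||\le C\ve^a$. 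You correctly flag the two places where a naive argument fails — one cannot iterate $R_\ve(z)=R(z)+R_\ve(z)(\cL_\ve-\cL)R(z)$ directly because the smallness is only $\cB\to\cB_w$, and one cannot upgrade $\cB_w$-control to $\cB$-control except through the LY smoothing after many iterates — and these are exactly the pivots of \cite{KL}. This is essentially the same route as the cited proof.
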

A simple, although not optimal, proof can be found in \cite[Theorem 3.2]{Li3}.\footnote{ Formally, the proof in \cite[Theorem 3.2]{Li3} deals with the case $\cB=BV$ and $\cB_w=L^1$, yet it carries out verbatim to the present, more general, case.}
The above perturbation theorem has proven rather flexible and able to cover most of the interesting cases.
%%%%%%%%%%%%%%%%%%%%%%%%%%%%%%%%%%%%%%%%%%%%%%%
\subsection{Deterministic stability}
Let the $\cL_\ve$ be Ruelle-Perron-Frobenius (Transfer) operators of maps $f_\ve$ which 
are $\cC^1$--close to $f$, that is $d_{\cC^1}(f_\ve,\,f)=\ve$ and such
that $d_{\cC^2}(f_\ve,f)\leq M$, for some fixed $M>0$. In this case
the uniform Lasota-Yorke inequality is trivial. On the other hand, for
all $\vf\in\cC^0$ holds 
\[
\int(\cL_\ve h-\cL h)\vf=\int h(\vf\circ f_\ve-\vf\circ f).
\]
Now let $\Phi(x):=(D_xf)^{-1}\int_{f(x)}^{f_\ve(x)}\vf(z)dz$, since
\[
\Phi'(x)=-(D_xf)^{-1}D^2_xf\Phi(x)+ D_x f_\ve(D_xf)^{-1}\vf(f_\ve(x))-\vf(f(x)).
\]
It follows
\[
\int(\cL_\ve h-\cL h)\vf=\int h\Phi'+\int
h(x)[(D_xf)^{-1}D^2_xf\Phi(x)+ (1- D_x f_\ve(D_xf)^{-1})\vf(f_\ve(x))].
\]
Given that $|\Phi|_\infty\leq \lambda^{-1}\ve|\vf|_\infty$ and $|1-
D_x f_\ve(D_xf)^{-1}|_\infty \leq \lambda^{-1}\ve$, we have
\[
\int(\cL_\ve h-\cL h)\vf\leq \|h\|_{\BV}\lambda^{-1}|\vf|_\infty\ve+
|h|_{L^1} \lambda^{-1}(B+1)\ve|\vf|_\infty\leq D\|h\|_{\BV}\ve |\vf|_\infty.
\]
Taking the sup on such $\vf$ yields the wanted inequality
\[
|\cL_\ve h-\cL h|_{L^1}\leq  D\|h\|_{\BV}\ve.
\]
We have thus seen that all the required Hypotheses are satisfied.
See \cite{keller2} for a more general setting including
piecewise smooth maps.

\subsection{Stochastic stability}
Next consider a set of maps $\{f_\omega\}$ depending on a parameter $\omega\in\Omega$. In addition assume that $\Omega$ is a probability space and $P$ a probability measure on $\Omega$.
Consider the process $x_n=f_{\omega_n}\circ \cdots\circ f_{\omega_1}x_0$ where the $\omega$ are i.i.d. random variables distributed accordingly to $P$ and let $\bE$ be the expectation of such process when $x_0$ is distributed according to $\mu$. Then, calling $\cL_{\omega}$ the transfer operator associated to $f_\omega$, we have
\[
\bE(h(x_{n+1})\;|\;x_n)=\cL_Ph(x_n):=\int_\Omega \cL_{\omega}h(x_n) P(d\omega).
\]
If, for all $\omega\in\Omega$, 
\[
|\cL_\omega h|_{\BV}\leq \lambda_{\omega}^{-1}|h|_{\BV}+B_\omega|h|_{L^1},
\]
then integrating yields
\[
|\cL_P h(x)|_{\BV}\leq \bE(\lambda_{\omega}^{-1})|h|_{\BV}+\bE(B_\omega)|h|_{L^1}.
\]
Thus the operator $\cL_P$ satisfies a Lasota-Yorke inequality provided that $\bE(\lambda_\omega^{-1})<1$ and $\bE(B_\omega)<\infty$.

In addition,  if for some map $f$ and associated transfer operator $\cL$,
\[
\bE(|\cL_\omega h-\cL h|)\leq \ve |h|_{\BV}
\]
then we can apply perturbation theory and obtain stochastic stability.

\subsection{Computability}
If we want to compute exactly the invariant measure and the rate of decay of correlations for a specific system we must reduce the problem to a finite dimensional one that can then be solved numerically. To this end we can introduce the function
\[
\phi(x)=\begin{cases}0\quad&\textrm{ if } x<-1\\
x+1 & x\in [-1,0]\\
1-x& x\in [0,1]\\
0&x\geq 1.
\end{cases}
\]
Note that $\sum_{i\in\bZ}\phi(x-i)=1$. We can then introduce the operators
\[
\begin{split}
&P_nh=n\sum_{i=0}^{n-1}\phi(nx-i)\int\phi(ny-i)h(y)dy\\
&\cL_n=P_n\cL.
\end{split}
\]
Note that $P_n(\cC^0)\subset \cC^0$ and
\[
\begin{split}
&\|P_nh\|_{L^1}\leq \|h\|_{L^1}\\
&\|P_nh\|_{W^{1,1}}\leq \|h\|_{W^{1,1}}\\
&\|h-P_nh\|_{L^1}\leq \frac 1n\|h\|_{W^{1,1}}.
\end{split}
\]
So we can again apply Theorem \ref{thm:pert} to show that the finite dimensional operator $\cL_{n}$ has the peripheral spectrum close to the one of $\cL$. The problem is thus reduced to diagonalising a matrix, which can be done numerically (provided the matrix is not too large). There exists a wide literature on the subject, see \cite{Li4} for more details.

\subsection{Linear response}
Linear response is a theory widely used by physicists. In essence it says the follow: consider a one parameter family of systems $f_s$ and the associated (e.g.) invariant measures $\mu_s$, then, for a given observable $\vf$ one want to study the response of the system to a small change in $s$, and, not surprisingly, one expects
$
\mu_s(\vf)=\mu_0(\vf)+s\nu(\vf)+o(s),
$
for some measure or distribution $\nu$.
That is, one expects differentiability in $s$, which is commonly called {\em linear response}. Yet differentiability is not ensured by Theorem \ref{thm:pert}.
It is then natural to ask under which conditions linear response holds. 

For example linear response holds if the maps are sufficiently smooth and the dependence on the parameter is also smooth in an appropriate sense. These type of results follow from a sophistication of Theorem \ref{thm:pert} that can be found in \cite{GL1}.

However, the reader should be aware that there exist natural and relevant cases when linear response fails. See \cite{BS} and references therein for an in depth discussion of this issues.

%%%%%%%%%%%%%%%%%%%%%%%%%%
\section{The contracting case}\label{sec:contractig}
Having illustrated the power of the transfer operator approach in the expanding case, it is natural to investigate to which extent it can be generalised.  A first remark is that, when it works, it automatically implies that the system either does not mix or mixes exponentially fast. Accordingly, the direct application of the above strategy is ill suited to the cases where the decay of correlation is only polynomial (although one  can still apply it after inducing). 

On the contrary, when the decay of correlations is expected to be exponential one can reasonably try to implement a transfer operator approach directly. In particular, it is natural to investigate the possibility to apply it to uniformly hyperbolic systems and partially hyperbolic system. To this end there are several technical  difficulties, some of them still outstanding.   

Clearly the first obstacle is the existence of contracting directions. Hence, our first question is: can we find appropriate Banach spaces for which the transfer operator of a contracting map has good spectral properties? The answer is yes. In fact, again, there exist several  possibilities.\footnote{ They all have the same flavour, although they might be quite different in the details.} 

Let us illustrate a basic one in the simplest possible case: let $f\in\cC^3(\bT,\bT)$ be an orientation preserving diffeomorphim with two fixed points, one attracting and one repelling. Without loss of generality we can assume that zero is the attracting fixed point. Let $\psi\in\cC^2(\bT,\bR)$ be a positive function such that $\psi=1$ in a neighbourhood of zero and $\psi=0$ in a neighbourhood of the repelling fixed point. Also let us assume that the support of $\psi$ be small enough so that 
\[
\|\psi f'\|_{\cC^0}\leq\lambda^{-1}<1.
\]
Consider the transfer operator $\cL h=(\psi h [f']^{-1})\circ f^{-1}$. For a measure $d\mu=h dx$ we have
\[
\int\vf\cL h dx=\int\vf\; d\,[f_*(\psi \mu)].
\]
Hence $\cL$ is the restriction to $L^1$ of the operator $\mu\to f_*(\psi\mu)$. In other words $\cL$ can be naturally extended to the space of measures; abusing notations we will still call $\cL$ such an extension.
With such a notation we have 
\[
\sup_{|\vf|_{\cC^0}\leq 1}\left|\int\vf d(\cL\mu) \right|=\sup_{|\vf|_{\cC^0}\leq 1}\left|\int\vf\circ f\psi  d\mu\right|\leq \sup_{|\vf|_{\cC^0}\leq 1}\left|\int\vf d\mu\right|.
\]
Moreover, $\cL\delta_0=\delta_0$, thus the spectral radius of $\cL$, when acting on the space of measures $\cC^0(\bT,\bR)'$, is one. 
However, as in the previous example, to obtain a Lasota-Yorke inequality we need to consider the operator acting on a different space. This time the space cannot be $\cC^1$ otherwise we would obtain a spectral radius larger than one. We need an idea. 

Idea:  let $\cL$ act on $(\cC^1)'$, the dual of $\cC^1$.\footnote{ The idea is more natural than it may look at first sight: the dual of $\cL$ is, essentially, the composition with $f$, a contractive map. We have seen that, in such a case, looking at the action on $\cC^1$ is a good idea. This suggests to consider $\cL$ acting on the dual of $\cC^1$.} For each $\vf\in\cC^1$, $\|\vf\|_{\cC^1}\leq 1$, we use the following notation\footnote{ This is equivalent to using the same notation for a measure and its density.}
\[
\cL h(\vf)=\int \vf \cL h=\int \vf\circ f \psi  h=h(\vf\circ f \psi),
\]
which is particularly useful when $h\in L^1\subset (\cC^1)'$.
Note that $\|\vf \circ f\psi\|_{\cC^0}\leq \|\vf\|_{\cC^0}$ while $\|(\vf\circ f \psi)'\|_{\cC^0}\leq \lambda^{-1}\|\vf'\|_{\cC^0}+\Const \|\vf\|_{\cC^0}$. The above gives a promising estimate for the derivative but not enough to establish a Lasota-Yorke type inequality. To this end note that, for each $\ve>0$ there exists $\vf_\ve\in\cC^2$ such that $\|\vf_\ve\|_{\cC^1}\leq 1$ and $\|\vf-\vf_\ve||_{\cC^0}\leq \ve$.\footnote{ Simply use a mollifier.} Then, there exists $B_0>0$ such that
\[
\left|\int \vf \cL h\right|\leq \int\left| (\vf-\vf_\ve)\circ f \psi  h\right|+\left|\int \vf_\ve\circ f \psi  h\right|\leq 2\lambda^{-1}\|h\|_{(\cC^1)'}+B_0\|h\|_{(\cC^2)'}
\]
where we have chosen $\ve$ small enough.
\begin{prob}\label{pro:contra1}
Use computations similar to the above to show that there exists $C,B>0$ such that, for all $n\in\bN$ and $h\in(\cC^1)'$,
\begin{equation}\label{eq:LY-contra}
\begin{split}
&\|\cL^n h\|_{(\cC^2)'}\leq C \|h\|_{(\cC^2)'}\\
&\|\cL^n h\|_{(\cC^1)'}\leq C\lambda^{-n}\|h\|_{(\cC^1)'}+B\|h\|_{(\cC^2)'}\,.
\end{split}
\end{equation}
\end{prob}
\begin{prob}\label{pro:contra2}
Prove that the unit ball $\{h\in (\cC^1)'\;:\; \|h\|_{(\cC^1)'}\leq 1\}$ is relatively compact in $(\cC^2)'$.
\end{prob}
Problems \ref{pro:contra1}, \ref{pro:contra2} and Theorem \ref{thm:hennion} imply that $\cL$, when acting on $(\cC^1)'$, has spectral radius one and essential spectral radius bounded by $\lambda^{-1}$. We have already seen that one belongs to the spectra. Suppose that $e^{i\theta}$ is in the spectra, then there exists $h_\theta\in(\cC^1)'$ such that, for all $\vf\in\cC^1$ and $n\in\bN$,
\[
\int e^{i\theta n}h_\theta\vf=\int \cL^n h_\theta \vf=\int h_\theta \left[\prod_{k=0}^{n-1}\psi\circ f^k \right]\vf\circ f^n.
\]
Note that, if $\supp \vf\cap\{0\}=\emptyset$, then there exists $n$ large enough so that $\psi\cdot \vf\circ f^n=0$. By density this implies that $\supp h_\theta=\{0\}$, that is $\int h_\theta \vf=a\vf(0)+b\vf'(0)$. But then we must have, for all $\vf\in\cC^1$,
\[
e^{i\theta}[a\vf(0)+b\vf'(0)]=a\vf(0)+b\vf'(0)f'(0)
\]
which has a solution only for $\theta=0$ and $b=0$. In other words, one is the only eigenvalue of modulus one and it is a simple eigenvalue. It follows that the system is exponentially mixing.\footnote{ To be precise it is exponentially mixing for observables that are supported away from the expanding fixed point. Given the above estimates, it is a simple exercise to study what happens to a general observable.}  Moreover, all the transfer operator theory previously developed can be applied to this situation. Indeed it is a good exercise to do so.

%%%%%%%%%%%%%%%%%%%%%%%%%%%%%%%%%%%%%%%%%%%%%%%%%%%%%%%%%%%%%
\section{An interlude: Toral automorphisms}\ \newline
The next step is to treat higher dimensional systems in which both contraction and expansion are present. The simplest such case is the {\em uniformly hyperbolic} case in which only expanding and contraction directions are present. Before describing some elements of the general theory we discuss in detail the simplest possible example: Toral automorphisms. For such simple systems we will discuss three different approaches that illustrate the basis of three different general theories used to investigate the statistical properties of dynamical systems.
 
Let us consider the map from $\bT^2$ to itself defined by
\[
f(x)=A x\mod 1, 
\]
with $A\in SL(2,\bZ)$. Also, for simplicity, let us assume that $A^t=A$ and $A_{i,j}>0$.
In analogy with the previous section we can define the operator $\cL h=h\circ f^{-1}$, note that
\[
\int_{\bT^2} \vf \cL h=\int_{\bT^2}\vf\circ f \cdot h.
\]
Simplifying even further, the reader can consider, as a concrete example,
\[
A=\begin{pmatrix}2&1\\1&1\end{pmatrix}.
\]
Note that the Lebesgue measure is invariant since $\det(A)=1$. Moreover $\Tr(A)>2$. Accordingly, the characteristic polynomial reads $t^2+\Tr(A)t+1$ and has roots $\lambda, \lambda^{-1}$, $\lambda>1$. We call $v^u,v^s$ the two normalised vectors such that
\begin{equation}\label{eq:eigen}
\begin{split}
&Av^u=\lambda v^u\\
&Av^s=\lambda^{-1} v^s.
\end{split}
\end{equation}
Note that, since the matrix is assumed symmetric, $\langle v^u,v^s\rangle=0$.

We have thus a natural reference measure. In fact, $(f,\bT^2,\Leb)$ turns out to be {\em mixing}, that is: for each $h,\vf\in\cC^0$
\[
\lim_{n\to\infty}\int_{\bT^2} h(x)\vf(f^n(x))dx=\int_{\bT^2} h(x)dx\int_{\bT^2} \vf(x)dx.
\]
In alternative, the mixing can be stated in the following equivalent way: for each probability measure $\mu$ such that $\frac{d\mu}{d\Leb}=h\in L^1$ and, for each $\vf\in\cC^0$,\footnote{ Recall that $\mu(\vf)=\int_{\bT^2}\vf(x) h(x) dx$ and $f_*\mu(\vf)=\mu(\vf\circ f)$.}
\begin{equation}\label{eq:mixing}
\lim_{n\to\infty}f_*^n\mu(\vf)=\Leb(\vf).
\end{equation}
This is a very relevant property from the applied point of view: it says that asymptotically our system is described by the Lebesgue measure irregardless of the initial distribution (provided the initial condition was distributed according to a measure absolutely continuous with respect to Lebesgue). 

Of course, property \eqref{eq:mixing} is truly useful only if the speed in the convergence to the limit is fast enough. Form this consideration follows the basic question that we want to address in the following:

\noindent{\em What is the speed of convergence in the limit \eqref{eq:mixing}} ?

%%%%%%%%%%%%%%%
\subsection{Standard pairs}
The first technique that I am going to illustrate is based on the idea of {\em coupling} in probability. This is a widely used tool to study the convergence to equilibrium of Markov chains. A similar technique was previously used in abstract ergodic theory under the name of {\em joining}. The form I am going to illustrate has been introduced in smooth ergodic theory by Lai-Sang Young \cite{Y2}, further developed in its present form by Dolgopyat and subsequently improved by many people (e.g. \cite{Chernov, Peyman, DeL2}).

The basic idea is to consider a special class of measures that behave under push-forward in a similar way to the case of expanding maps.  Such a class of measures has a long history (e.g. from Pesin and Sinai  \cite{PeSi} to \cite{Li2}), but they have been systematically developed and used by Dolgopyat under the name of {\em standard pairs} \cite{Do2, Do3}.

Fix some $a>1$ and define
\[ 
D_a=\left\{ h\in\cC^0(\bR,\bR_+)\;:\;\forall t,s\in\bR,\; \frac{h(t)}{h(s)}\leq e^{a|t-s|}\right\}.
\]
Also, for each $b\in\bR_+$, $x\in\bT^2$ and $h\in\cC^0(\bR,\bR_+)$, $\int_{-b}^bh=1$, define the measure on $\bT^2$ ({\em standard pair})
\[
\mu_{b,x,h}(\vf)=\int_{-b}^bh(t)\vf(x+tv^u) dt.
\]
The collection of standard pairs will be designated by
\[
S_a=\left\{\mu_{b,x,h}\;:\; b\in[1/2,1],\, x\in\bT^2,\, h\in D_a,\, \int_{-b}^bh=1\right\}.
\]
The above are our building blocks, let us see what we can construct with them. First of all we can take the convex hull: for each finite set $\{p_i\}$ of positive numbers such that $\sum_ip_i=1$  and set $\{\mu_i\}\subset S_a$ we can consider the probability measure
\begin{equation}\label{eq:family}
\mu=\sum_i p_i\mu_i,
\end{equation}
where the $p_i$ are called the {\em masses} of the standard pairs. The set $\{\mu_i,p_i\}$ is called a {\em standard family} and is often confused with the measure it defines via \eqref{eq:family}. Note however that the representation of a measure by a standard family, if it exists, is far from being unique. We will call $\cS_a$ the set of all standard families.
The first important fact is the following.
\begin{lem}\label{lem:lebesgue} The Lebesgue measure belongs to the weak closure of $\cS_a$.\footnote{ Recall that $\mu_n$ converges weakly to $\mu$ if, for all $\vf\in\cC^0$, we have $\lim_{n\to\infty} \mu_n(\vf)=\mu(\vf)$.}
\end{lem}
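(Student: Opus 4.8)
The plan is to realize $\Leb$ as a weak limit of standard families, each obtained by chopping one very long orbit of the linear flow in the unstable direction into unit segments. Fix any base point $x_0\in\bT^2$, take $b=\tfrac12$ (so each standard pair sits on a segment of length $2b=1$), and take the density to be the constant $h\equiv1$; the latter lies in $D_a$ for every $a>0$ since $h(t)/h(s)=1\le e^{a|t-s|}$. For $N\in\bN$ set $x_i=x_0+(i+\tfrac12)v^u$, $i=0,\dots,N-1$, and let
\[
\mu^{(N)}=\frac1N\sum_{i=0}^{N-1}\mu_{b,x_i,h},
\]
each standard pair carrying mass $1/N$. Since $b\in[1/2,1]$, $h\in D_a$ and $\int_{-b}^{b}h=1$, this $\mu^{(N)}$ is a legitimate element of $\cS_a$.

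Next I would unfold the definition. The underlying segment of the $i$-th pair is $\{x_0+sv^u:s\in[i,i+1]\}$, so concatenating the pieces and changing variables yields the telescoping identity
\[
\mu^{(N)}(\vf)=\frac1N\sum_{i=0}^{N-1}\int_{-1/2}^{1/2}\vf(x_0+(i+\tfrac12+t)v^u)\,dt=\frac1N\int_0^N\vf(x_0+sv^u)\,ds
\]
for every $\vf\in\cC^0(\bT^2,\bR)$. Thus $\mu^{(N)}(\vf)$ is exactly the time-$N$ ergodic average of $\vf$ along the orbit of $x_0$ under the translation flow $\phi_s(x)=x+sv^u$ on $\bT^2$.

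The one substantive input is then convergence of these averages to $\Leb(\vf)$ as $N\to\infty$, which I would get from unique ergodicity of $\phi_s$. The flow $\phi_s$ is uniquely ergodic (the classical Kronecker--Weyl fact) because $v^u$ has irrational slope: if the slope were $p/q\in\bQ$ then $v^u$ would be parallel to $(q,p)\in\bZ^2$, whence $A(q,p)^{t}=\lambda(q,p)^{t}$ would be an integer vector and $\lambda$ a rational root of the monic integer characteristic polynomial of $A$, forcing $\lambda=\pm1$, contrary to $\lambda>1$. Unique ergodicity gives $\frac1N\int_0^N\vf\circ\phi_s\,ds\to\Leb(\vf)$ (uniformly in $x_0$, though pointwise is all we need), hence $\mu^{(N)}(\vf)\to\Leb(\vf)$ for every $\vf\in\cC^0$, i.e.\ $\mu^{(N)}\to\Leb$ weakly, which is the assertion.

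There is no genuinely hard step: the argument is the change of variables, the observation that constant densities are admissible densities, and one classical equidistribution fact. The only point one must not skip is checking that $v^u$ points in an irrational direction, since this is precisely what makes the long unstable segment used as the building block fill the whole torus evenly. Alternatively, the needed equidistribution of long unstable segments can be deduced from the mixing of $f$ recorded above, but the Kronecker--Weyl argument is shorter and self-contained.
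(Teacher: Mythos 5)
Your proof is correct, but the route is genuinely different from the paper's. The paper's argument is purely measure-theoretic: it performs the linear change of variables $(t,s)\mapsto (t,s+ut)$ (Jacobian $1$), writes $\Leb(\vf)=\int_0^1 ds\int_0^{\sqrt{1+u^2}}dt\,\vf(se_2+tv^u)$, observes that for each fixed $s$ the inner integral is a finite convex combination of standard pairs with constant density, and then approximates the outer integral $\int_0^1 ds$ by Riemann sums, each of which is a standard family. No dynamics or equidistribution is invoked; the only fact used is Fubini. You instead build a single long unstable segment through one base point, chop it into unit pieces to form $\mu^{(N)}\in\cS_a$, identify $\mu^{(N)}(\vf)$ with a Birkhoff average of the translation flow $\phi_s$ in direction $v^u$, and then appeal to unique ergodicity of the linear flow with irrational slope (Kronecker--Weyl), supplying the required irrationality of $v^u$ from the integrality of the characteristic polynomial of $A$ and $\lambda>1$. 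Both proofs are complete and correct. The paper's version is more elementary (it never needs the irrationality of $v^u$, which at this stage is a bonus since that fact is not used until the pre-matching step), while yours is shorter to state and more ``dynamical'', at the cost of importing an equidistribution theorem --- essentially the same irrational-rotation ergodicity that the paper later uses to produce pre-matching pairs. Your irrationality verification is correct: a rational eigenvalue of a monic integer polynomial with constant term $\det A=1$ would have to be $\pm1$, contradicting $\lambda>1$. One small stylistic remark: the paper constructs, for each $N$, a family that is \emph{exactly} a Riemann sum of $\Leb$ restricted to transversal parameter, whereas your $\mu^{(N)}$ only converges to $\Leb$ in the limit; both are fine since the statement only asks that $\Leb$ lie in the \emph{weak closure} of $\cS_a$.
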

\begin{proof}
Letting $v^u=(1+u^2)^{-\frac 12}(1, u)$, for each $\vf\in\cC^0$,
\[
\Leb(\vf)=\int_0^1dt\int_0^1 ds\vf(t, s+ut)= \int_0^1 ds\int_0^{\sqrt{1+u^2}}dt\vf(se_2+tv^u).
\]
Note that the the second integral can be written as the convex combination of finitely many standard pairs, the results follows since the first integral is the limit of finite sums.
\end{proof}
Next we want to know how the standard pairs behaves under push forward.
\begin{lem}
For each $n\in\bN$ and $\mu\in \cS_a$ it holds true $f_*^n\mu\in \cS_{\lambda^{-n}a}$.
\end{lem}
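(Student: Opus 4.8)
The plan is to reduce everything, by linearity of the push-forward on measures, to a single standard pair $\mu=\mu_{b,x,h}\in S_a$ and to verify that $f_*\mu$ can be written as a finite convex combination of elements of $S_{\lambda^{-1}a}$; iterating then gives the statement for $f_*^n$ and general standard families. So first I would compute $f_*\mu$. Since $f$ is linear, $f(x+tv^u)=Ax+t\lambda v^u \bmod 1$, hence for $\vf\in\cC^0(\bT^2,\bR)$
\[
f_*\mu(\vf)=\mu(\vf\circ f)=\int_{-b}^b h(t)\,\vf(Ax+t\lambda v^u)\,dt
=\int_{-\lambda b}^{\lambda b}\frac{1}{\lambda}\,h\!\left(\frac{\tau}{\lambda}\right)\vf(Ax+\tau v^u)\,d\tau,
\]
after the change of variables $\tau=\lambda t$. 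So formally $f_*\mu$ is ``$\mu_{\lambda b,\,Ax,\,\tilde h}$'' with $\tilde h(\tau)=\lambda^{-1}h(\tau/\lambda)$, which has total mass one on $[-\lambda b,\lambda b]$; and the distortion bound transfers: $\tilde h(\tau)/\tilde h(\sigma)=h(\tau/\lambda)/h(\sigma/\lambda)\le e^{a|\tau-\sigma|/\lambda}$, so $\tilde h\in D_{\lambda^{-1}a}$. The only defect is that $\lambda b$ lies in $[\lambda/2,\lambda]$, which is outside the admissible length interval $[1/2,1]$ prescribed in $S_a$.

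The remaining step is therefore purely a chopping argument: cut the long curve $\{Ax+\tau v^u : \tau\in[-\lambda b,\lambda b]\}$ into finitely many subcurves each of $v^u$-length in $[1/2,1]$. Concretely, pick points $-\lambda b=\tau_0<\tau_1<\cdots<\tau_N=\lambda b$ with $\tau_{j+1}-\tau_j\in[1/2,1]$ (possible since $2\lambda b\ge \lambda\ge 1$, so one can always take pieces of length in that range, e.g. all equal to $2\lambda b/N$ for suitable $N$). On the $j$-th piece set $p_j=\int_{\tau_j}^{\tau_{j+1}}\tilde h$, so $\sum_j p_j=1$, and $h_j(s)=p_j^{-1}\tilde h(\tau_j+s+\tfrac{\tau_{j+1}-\tau_j}{2})$ re-centered and normalized to integrate to one over $[-b_j,b_j]$ with $b_j=\tfrac{\tau_{j+1}-\tau_j}{2}\in[1/4,1/2]$—wait, that gives $b_j\in[1/4,1/2]$, so instead one should just demand $\tau_{j+1}-\tau_j\in[1,2]$ so that $b_j\in[1/2,1]$; this is fine since $2\lambda b\ge 2\lambda>2$ leaves room. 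With $y_j=Ax+\tfrac{\tau_j+\tau_{j+1}}{2}v^u\bmod 1$ one gets $f_*\mu=\sum_{j=0}^{N-1}p_j\,\mu_{b_j,y_j,h_j}$ and each $h_j$ inherits the distortion constant $\lambda^{-1}a$ from $\tilde h$, since restriction and translation of a function in $D_{\lambda^{-1}a}$ stay in $D_{\lambda^{-1}a}$. Hence $f_*\mu\in\cS_{\lambda^{-1}a}$.

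For the general statement I would iterate: if $\mu\in\cS_a$ then $\mu=\sum_i p_i\mu_i$ with $\mu_i\in S_a$, so $f_*^n\mu=\sum_i p_i\, f_*^n\mu_i$, and it suffices to handle $f_*^n$ of a single standard pair. Applying the one-step computation $n$ times, the length multiplies by $\lambda^n$ and the distortion constant divides by $\lambda^n$ at each stage—actually the cleanest bookkeeping is to do all $n$ stretches first (getting a single long curve of length $2\lambda^n b$ carrying a density in $D_{\lambda^{-n}a}$) and chop only at the end, which avoids tracking the chopping through intermediate steps. Then $f_*^n\mu\in\cS_{\lambda^{-n}a}$. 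The main (and only) obstacle is the mild bookkeeping in the chopping step—ensuring the pieces can be taken with $v^u$-length exactly in $[1/2,1]$ and that the re-centered, re-normalized densities still lie in $D_{\lambda^{-n}a}$—but since $D_c$ is closed under restriction, translation and positive rescaling, and $\lambda^n b\ge 1$ guarantees enough length to partition into admissible pieces, there is no real difficulty; the essential content is just the elementary observation that linear expansion by $\lambda$ along $v^u$ divides the log-Lipschitz constant of the conditional density by $\lambda$.
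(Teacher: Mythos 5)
Your proof is correct and, in its final form (stretch $n$ times first, then chop), coincides with the paper's: push forward to obtain a single long segment of half-length $\lambda^n b$ carrying the rescaled density $\lambda^{-n}h(\cdot/\lambda^n)\in D_{\lambda^{-n}a}$, then partition it into subsegments of half-length $\delta\in[1/2,1]$ and renormalize on each piece. Your mid-proof self-correction about the piece lengths (full width in $[1,2]$, not $[1/2,1]$, so that the half-widths $b_j$ land in $[1/2,1]$) is exactly the point, and matches the paper's choice of $\delta\in[1/2,1]$ with $\lambda^n b=2K\delta$.
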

\begin{proof}
It suffices to prove that if $\mu\in S_a$ then $f_*^n\mu\in \cS_{\lambda^{-n}a}$. Then, recalling \eqref{eq:eigen},
\[
f_*^n\mu_{b,x,h}(\vf)=\int_{-b}^bh(t)\vf(f^n(x)+t\lambda^nv^u)dt=\lambda^{-n}\int_{-\lambda^{n}b}^{\lambda^{n}b}h(t\lambda^{-n})\vf(f^n(x)+tv^u) dt.
\]
Next, let $\delta\in [1/2,1]$ and $K\in\bN$ such that $\lambda^n b=2K\delta$ and define $t_i=-\lambda^{n}b+(2i+1)\delta$. We can then write
\[
\begin{split}
&f_*^n\mu_{b,x,h}(\vf)=\sum_{i=0}^{K-1}p_i\int_{-\delta}^\delta h_i(t)\vf([f(x)+t_iv^u]+tv^u) dt\\
&p_i=\lambda^{-n}\int_{-\delta}^\delta h(\lambda^{-n}(t_i+t)) dt\\
&h_i(t)=p_i^{-1}h(\lambda^{-n}(t_i+t)).
\end{split}
\]
Accordingly, the Lemma is proven provided $h_i\in D_{\lambda^{-n}a}$. This follows from
\[
\frac{h_i(t)}{ h_i(s)}=\frac{h(\lambda^{-n}(t_i+t))}{h(\lambda^{-n}(t_i+s))}\leq e^{a\lambda^{-n}|t-s|}.
\]
\end{proof}
\begin{rem} Note that the unbounded parameter contraction proven in the previous Lemma is a peculiarity of the linear systems we are studying. However in the nonlinear case a fixed contraction still takes place (provided $a$ is large enough) and this is all we will use in the following.
\end{rem}
To continue, we call two standard pairs $\mu_1=\mu_{b,x,h}$ and $\mu_2=\mu_{b,x+sv^s,h}$, $s\in [1,2]$, {\em matching}, while we call {\em pre-matching} two standard pairs of the form $\mu_1=\mu_{b,x,h_1}$, $\mu_2=\mu_{b,x+sv^s,h_2}$.
The basic fact underlying our strategy is the following:
\begin{lem}\label{lem:matching} Let $\mu_1,\mu_2$ be two matching standard pairs, then, for each $\vf\in\cC^1$,\footnote{ We are using the notation $\partial_s\vf=\langle v^s,\nabla\vf\rangle$.}
\[
\left|f_*^n\mu_1(\vf)-f_*^n\mu_2(\vf)\right|\leq \|\partial_s\vf\|_\infty\lambda^{-n}
\]
\end{lem}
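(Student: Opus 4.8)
The plan is to notice that for \emph{matching} pairs nearly everything cancels, leaving a single one–line derivative estimate. The crucial feature is that $\mu_1=\mu_{b,x,h}$ and $\mu_2=\mu_{b,x+sv^s,h}$ have the \emph{same} half–length $b$ and the \emph{same} density $h$, differing only by the translation $x\mapsto x+sv^s$ along the stable direction.

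First I would record the shape of the push–forward. Exactly as in the proof of the preceding Lemma, linearity of $A$ together with \eqref{eq:eigen} gives, for any standard pair,
\[
f_*^n\mu_{b,y,h}(\vf)=\int_{-b}^b h(t)\,\vf\big(f^n(y)+\lambda^n t\,v^u\big)\,dt ,
\]
the argument being read modulo $1$, which is legitimate because $\vf\in\cC^1(\bT^2,\bR)$ is a function on the torus; and likewise $f^n(x+sv^s)=f^n(x)+s\lambda^{-n}v^s$ on $\bT^2$ by \eqref{eq:eigen}. Subtracting, the equality of $b$ and $h$ makes the two integrals combine into
\[
f_*^n\mu_1(\vf)-f_*^n\mu_2(\vf)=\int_{-b}^b h(t)\,\big[\vf(y_t)-\vf(y_t+s\lambda^{-n}v^s)\big]\,dt ,\qquad y_t:=f^n(x)+\lambda^n t\,v^u .
\]
Then I would bound the bracket: integrating $\tfrac{d}{d\tau}\vf(y_t+\tau v^s)=\partial_s\vf(y_t+\tau v^s)$ over $\tau\in[0,s\lambda^{-n}]$ gives $\big|\vf(y_t)-\vf(y_t+s\lambda^{-n}v^s)\big|\le s\lambda^{-n}\|\partial_s\vf\|_\infty$. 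Since $\int_{-b}^b h=1$ and $s\le 2$, this yields $\big|f_*^n\mu_1(\vf)-f_*^n\mu_2(\vf)\big|\le s\lambda^{-n}\|\partial_s\vf\|_\infty$, which is the claimed inequality (the constant $s\in[1,2]$ being irrelevant in what follows).

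I do not anticipate a genuine obstacle here: the whole content is squeezed into the definition of \emph{matching} — that $b$ and $h$ coincide — which is precisely what lets the densities drop out, so that only the exponential contraction $\|A^n v^s\|=\lambda^{-n}$ of the stable direction is used. The two points deserving a moment's care are the reduction modulo $1$, which is automatic because the identical unstable shift $\lambda^n t\,v^u$ appears in both terms and $\vf$ is periodic, and the precise value of the constant discussed above; neither matters downstream. For merely \emph{pre}–matching pairs, where $h_1\neq h_2$, this clean cancellation fails and one genuinely needs a separate estimate on $h_1-h_2$, which is the reason the two notions are kept distinct.
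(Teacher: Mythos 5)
Your proof is correct and follows essentially the same route as the paper: compute the push-forward using linearity and \eqref{eq:eigen}, note that matching pairs share $b$ and $h$ so the integrals combine into a single integral of a difference of $\vf$ at two points separated by $s\lambda^{-n}v^s$, then bound that difference by the stable-directional derivative. You are also right to flag the factor $s\in[1,2]$: the bound one actually obtains is $s\lambda^{-n}\|\partial_s\vf\|_\infty$, and the stated inequality absorbs it silently (the paper's own proof has the same slight imprecision), which, as you note, is immaterial for the subsequent iteration.
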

\begin{proof}
It follows by a direct computation:
\[
\begin{split}
\left|f_*^n\mu_1(\vf)-f_*^n\mu_2(\vf)\right|&=\left|\int_{-b}^{b}h(t)[\vf(f^n(x)+\lambda^{-n}sv^s+\lambda^ntv^u)-\vf(f^n(x)+\lambda^ntv^u)]\right|\\
&\leq \|\partial_s\vf\|_\infty\lambda^{-n}\int_{-b}^{b}h(t)=\|\partial_s\vf\|_\infty\lambda^{-n}.
\end{split}
\]
\end{proof}
The above Lemma is really a {\em coupling} between the two measures, see Remark \ref{rem:coupling}. The Lemma shows that the convenient topology in which to study the convergence of the push-forward of standard pairs is $(\cC^1)'$. In other words, it suggests that it is natural to consider {\em distributions} rather than measures. Indeed, this is consistent with our discussion of the contracting case in section \ref{sec:contractig}.

With these definitions in place we are now ready to argue: given two standard pairs $\mu_1,\mu_2$, we know that $f_*^n\mu_1,f_*^n\mu_2$ are standard families in $\cS_{\lambda^{-n}a}$. Note that there is some freedom in how to divide a segment of length $\lambda^{n}b$ in segments of length between $1$ and $2$. In particular one can check that, if $n$ is large enough, one can make the division so that the two families contain two pre-matching standard pairs. That is, there exists a standard pair in the first family supported on $\{y+tv^u\}_{t\in[-b,b]}$ and a standard pair, in the second family, supported on  $\{y+sv^s+tv^u\}_{t\in[-b,b]}$ for some $b\in [1/2,1]$, $s\in [1,2]$ and $y\in\bT^2$. This is a consequence of the fact that the flow $\phi_t(y)=y+tv^u$ is ergodic (although much less is needed), since the ratio of the components of $v^u$ is irrational.

Accordingly, for $n$ large enough, $\lambda^n>2$ and there exist pre-matching standard pairs for any initial couple of standard pairs. Let $n_0$ be the smallest of such $n$. Also we call the two pre-matching standard pairs $\tilde\mu_{0,1}$ and $\tilde\mu_{0,2}$ respectively. Thus we can write\footnote{ We can always arrange so that the two standard families obtained by push forward have the same number of elements $m_1$, for example by allowing some of the $\tilde p_{j,i}$ to be zero or by duplicating the same standard pair giving half of the mass to each copy.}
\[
f_*^{n_0}\mu_1(\vf)-f_*^{n_0}\mu_2(\vf)=\sum_{j=1}^{m_1}\tilde p_{j,1}\tilde \mu_{j,1}(\vf)-\sum_{j=1}^{m_1}\tilde p_{j,2}\tilde \mu_{j,2}(\vf)+\tilde p_{0,1}\tilde\mu_{0,1}(\vf)-\tilde p_{0,2}\tilde\mu_{0,2}(\vf)
\]
for some weights $\tilde p_{j,i}\geq 0$ and standard pairs $\tilde \mu_{j,i}\in \cS_{\lambda^{-n_0}a}$. Note that, if $\tilde p_{j,i}\neq 0$, then $\tilde p_{j,i}\geq (2\lambda^{n_0} e^{2a})^{-1}$ by construction. Also we know that
\[
\tilde\mu_{0,1}(\vf)=\int_{-b_{0}}^{b_{0}} h_{0,1}(t) \vf(y+tv^u) dt\;;\quad \tilde\mu_{0,2}(\vf)=\int_{-b_{0}}^{b_{0}} h_{0,2}(t) \vf(y+sv^s+tv^u) dt
\]
for some $b_{0}\in[1/2,1]$, $y\in\bT^2$ and $h_{0,i}\in D_{\lambda^{-n_0}a}$. 

To obtain a convergence to equilibrium we want to show that some part of the push-forward measures behaves similarly. The tool to do so will be to use Lemma \ref{lem:matching}. To this end we have to exhibit matching standard pairs.

The idea to construct matching standard pairs is to single out a common part of the density by using the fact that $h_{0,i}\geq e^{-2\lambda^{-n_0}a}(2b_0)^{-1}$. Of course we want to still have standard pairs, hence a small computation is called for. For each $c>0$ small enough,
\[
\begin{split}
\frac{h_{0,i}(t)-\frac c{2b_0}}{h_{0,i}(s)-\frac c{2b_0}}&\leq\frac{h_{0,i}(s)e^{\lambda^{-n_0}a|t-s|}-\frac c{2b_0}}{h_{0,i}(s)-\frac c{2b_0}}\leq e^{\lambda^{-n_0}a|t-s|}\frac{h_{0,i}(s)-\frac c{2b_0}e^{-\lambda^{-n_0}a|t-s|}}{h_{0,i}(s)-\frac c{2b_0}}\\
&\leq e^{\lambda^{-n_0}a|t-s|}\left[1+c\frac{1-e^{-\lambda^{-n_0}a|t-s|}}{2e^{-2\lambda^{-n_0}a}-c}\right]
\leq e^{\lambda^{-n_0}a|t-s|}\left[1+c\frac{\lambda^{-n_0}a|t-s|}{2e^{-2\lambda^{-n_0}a}-c}\right].
\end{split}
\]
Finally we choose $c$ so small that 
\[
\gamma=\frac{c}{2e^{-2\lambda^{-n_0}a}-c}\leq 1.
\]
Hence
\[
\frac{h_{0,i}(t)-\frac c{2b_0}}{h_{0,i}(s)-\frac c{2b_0}}\leq e^{\lambda^{-n_0}a(1+\gamma)|t-s|}\leq e^{a|t-s|}.
\]
This means that we can write
\[
\begin{split}
&\tilde\mu_{0,1}(\vf)-\tilde\mu_{0,2}(\vf)=c\int_{-b_{0}}^{b_{0}}\frac 1{2b_0} [ \vf(y+tv^u)-\vf(y+sv^s+tv^u)] dt\\
&+\left(1-c\right)\left[\int_{-b_{0}}^{b_{0}} \frac{h_{0,1}(t)-\frac c{2b_0}}{1-c} \vf(y+tv^u)-\int_{-b_{0}}^{b_{0}} \frac{h_{0,2}(t)-\frac c{2b_0}}{1-c}\vf(y+sv^s+tv^u) dt\right].
\end{split}
\]
Note that we have constructed two matching standard pairs with mass $c$. 

We are almost done. The only remaining problem is that the two pre-matching standard pairs come with different masses. To take care of this we have to rearrange a bit the standard families. Unfortunately the notation is rather unpleasant but if the reader manages to see through the notation she will realise that the strategy is the obvious one.

Let $p_*=\min\{\tilde p_{0,1},\tilde p_{0,2}\}$, $p_{0,i}=\tilde p_{0,i}-p_*c$ and define
\[
\begin{split}
&p_{0,i}=\frac{\tilde p_{0,i}-p_*c}{1-p_*c}\;;\quad p_{j,i}=\frac{\tilde p_{j,i}}{1-p_*c} \quad  \forall j\in \{1,\dots, m_1\}\\
&\mu_{0,1}(\vf)=\int_{-b_{0}}^{b_{0}} \frac {\tilde p_{0,1}h_{0,1}(t) -\frac {p_* c}{2b_0}}{\tilde p_{0,1} -p_*c}\vf(y+tv^u) dt\\
&\mu_{0,2}(\vf)=\int_{-b_{0}}^{b_{0}} \frac {\tilde p_{0,2}h_{0,2}(t) -\frac {p_* c}{2b_0}}{\tilde p_{0,2} -p_*c} \vf(y+sv^s+tv^u) dt\\
&\mu_{0,1}^*(\vf)=\int_{-b_{0}}^{b_{0}}\frac 1{2b_0} \vf(y+tv^u) dt\\
&\mu_{0,2}^*(\vf)=\int_{-b_{0}}^{b_{0}}\frac 1{2b_0} \vf(y+sv^s+tv^u) dt\\
&\mu_{j,i}=\tilde \mu_{j,i} \quad \forall j\in \{1,\dots, m_1\}.
\end{split}
\]
The $\mu_{0,i}^*$ are matching standard pairs, $\mu_{0,i}$ are standard pairs, $\sum_{j=0}^{m_1}p_{j,i}=1$  and
\[
 f_*^{n_0}\mu_i(\vf)=cp_*\mu_{0,i}^*(\vf)+(1-cp_*)\sum_{j=0}^{m_1}p_{j,i}\mu_{j,i}(\vf).
\]
Then, for each $n\geq n_0$,  by Lemma \ref{lem:matching} we have
\[
\begin{split}
&\left| f_*^{n}\mu_1(\vf)-f_*^{n}\mu_2(\vf)-(1-p_*c)\left[\sum_{j=0}^{m_1}p_{j,1}f_*^{n-n_0}\mu_{j,1}(\vf)-\sum_{j=0}^{m_1}p_{j,2}f_*^{n-n_0}\mu_{2,1}(\vf)\right]\right|\\
&\phantom{ f_*^{n}\mu_1(\vf)-f_*^{n}\mu_2(\vf)}
\leq cp_* b\|\partial_s\vf\|_\infty\lambda^{-n+n_0}.
\end{split}
\]
Thus,
\[
\begin{split}
&\left| f_*^{n}\mu_1(\vf)-f_*^{n}\mu_2(\vf)-(1-p_*c)\sum_{j,k=0}^{m_1}p_{j,1}p_{k,2}\left[f_*^{n-n_0}\mu_{j,1}(\vf)-f_*^{n-n_0}\mu_{k,2}(\vf)\right]\right|\\
&\phantom{ f_*^{n}\mu_1(\vf)-f_*^{n}\mu_2(\vf)}
\leq cp_* b\|\partial_s\vf\|_\infty\lambda^{-n+n_0}.
\end{split}
\]
To conclude it suffices to iterate the above formula applying it to each couple of standard pairs $\mu_{j,1}, \mu_{k,2}$. Let $n=\ell n_0$, then for each $\nu<\max\{(1-p_*c)^{1/n_0},\lambda^{-1}\}$ we have
\[
\begin{split}
|f_*^{n}\mu_1(\vf)-f_*^{n}\mu_2(\vf)|&\leq 2(1-p_*c)^{\ell}\|\vf\|_{\infty}+\sum_{k=0}^{\ell-1}cp_* b\|\partial_s\vf\|_\infty(1-p_*c)^{k}\lambda^{-n+(k+1)n_0}\\
&\leq C \nu^n(\|\vf\|_{\infty}+\|\partial_s\vf\|_\infty)
\end{split}
\]
for some $C>0$, depending on $\nu$. The same estimate carries over to standard families and hence to the weak closure of $\cS_a$. The reader can check, arguing similarly to Lemma \ref{lem:lebesgue}, that the above implies that for each $h\in\cC^1$,
\[
\left|\int_{\bT^2}h(x)\vf\circ T^n(x) dx-\int_{\bT^2}\vf(x)dx\right|\leq C(\|h\|_{\infty}+\|\partial_u h\|_\infty)(\|\vf\|_{\infty}+\|\partial_s\vf\|_\infty)\nu^n.
\]
We have thus established that the map is mixing and that the speed of mixing is exponential with a prefactor depending on the smoothness of $h$ along the unstable direction and the smoothness of $\vf$ along the stable direction.

Let us conclude with a general remark connecting the present discussion to usual coupling arguments in probability theory.
\begin{rem}\label{rem:coupling} Given a compact metric space $X$ and two Borel probability measures $\mu,\nu$ a {\em coupling} of the two measures is a probability measure $G$ on $X^2$ such that
\[
\int_{X^2} \vf(x) G(dx,dy)=\int_X \vf(x) \mu(dx) \quad \textrm{and} \quad \int_{X^2} \vf(y) G(dx,dy)=\int_X \vf(y) \nu(dy).
\]
Let $\cG(\mu,\nu)$ be the set of couplings of $\mu$ and $\nu$, we can then introduce the Kantorovich (sometimes called Wasserstein) distance 
\[
d_K(\mu,\nu)=\inf_{G\in\cG(\mu,\nu)} \int _{X^2}d(x,y) G(dx,dy).
\]
The following is a coupling between two matching standard pairs $\mu_1=\mu_{b,x,h}$ and  $\mu_2=\mu_{b,x+sv^s,h}$ :
\[
G(\vf)=\int_{[-b,b]^2} \vf(x+tv^u, x+sv^s+tv^u) h(t) dt.
\]
Using such a coupling we can reinterpret the proof of Lemma \ref{lem:matching} to obtain\footnote{ Indeed, for the stated coupling $G$ of $f_*^n\mu_1,f_*^n\mu_2$,
\[
\left|f_*^n\mu_1(\vf)-f_*^n\mu_2(\vf)\right|=\left| \int_{\bT^4}[\vf(x)-\vf(y) ]G(dx,dy)\right|\leq \|\partial_s\vf\|_\infty d_K(f_*^n\mu_1,f_*^n\mu_2).
\]
}
\[
d_K(f_*^n\mu_1,f^*\mu_2)=\inf_{G'\in\cG(f_*^n\mu_1,f_*^n\mu_2)} \int _{\bT^4} d(x,y) G'(dx,dy)\leq 2be^{ab}\lambda^{-n},
\]
where $d(x,y)=\inf_{k\in\bT^2}\|x-y+k\|$.
Also it is not hard to prove that in this case the topology associated to the distance $d_K$ is the weak topology. As an exercise the reader can translate the results of this section in terms of a statement on the Kantorovich distance.
\end{rem}

%%%%%%%%%%%%%%%%%%%
\subsection{Fourier Transform}
The standard pairs method is very flexible and can be adapted to a large range of situations. 
Yet, since the maps we are presently studying are linear, a much more powerful too is available: Fourier series. Indeed, for each $k\in\bZ^2$,
\begin{equation}\label{eq:f-dyn}
\begin{split}
(\widehat{\cL^n h})_k&=\int_{\bT^2} e^{2\pi i\langle k, x\rangle}\cL^n h(x) dx=\int_{\bT^2} e^{2\pi i\langle k, A^n x\rangle} h(x) dx\\
&=\int_{\bT^2} e^{2\pi i\langle A^nk, x\rangle} h(x) dx=\hat h_{A^nk}.
\end{split}
\end{equation}
Accordingly, for each $h,\vf\in \cC^r$,
\[
\begin{split}
\left|\int_{\bT^2}\vf\cL^{2n} h-\int \vf\right|&\leq \sum_{k\in\bZ^2/\{0\}}|\hat \vf_k\hat h_{A^{2n}k}|\leq\sum_{k\in\bZ^2/\{0\}}\frac{\|h\|_{\cC^r}\|\vf\|_{\cC^r}}{(\|A^{2n}k\|+1)^r(\|k\|+1)^r}\\
&\leq\sum_{k\in\bZ^2/\{0\}}\frac{\|h\|_{\cC^r}\|\vf\|_{\cC^r}}{(\|A^{n}k\|+1)^r(\|A^{-n}k\|+1)^r} .
\end{split}
\]
For each $k\in\bR^2$, we write $a v^u+bv^s$ (recall \eqref{eq:eigen}). It follows that $A^n k=a\lambda^{n}v^u+b\lambda^{-n}v^s $ and $A^{-n} k=a\lambda^{-n}v^u+b\lambda^{n}v^s $. Thus 
\[
\|A^{-n}k\|^2+\|A^nk\|^2\geq (b^2+a^2)\lambda^{2n}=\|k\|^2\lambda^{2n}.
\]
Accordingly,
\[
(\|A^{n}k\|+1)(\|A^{-n}k\|+1)\geq \|k\|\lambda^n.
\]
We can thus conclude, for all $r>2$,
\[
\left|\int_{\bT^2}\vf\cL^{2n} h-\int \vf\right|\leq\sum_{k\in\bZ^2/\{0\}}\frac{\|h\|_{\cC^r}\|\vf\|_{\cC^r}}{\|k\|^r}\lambda^{-nr}\leq C_r\|h\|_{\cC^r}\|\vf\|_{\cC^r}\lambda^{-nr},
\]
for some constant $C_r$ independent on $h$ and $\vf$.

We have thus proven, again, that toral automorphisms enjoy exponential decay of correlation but we have also uncovered a new phenomena: the speed of decay depends very much on the smoothness of the functions.

Yet, there are also reasons of unhappiness: the requirement on the smoothness of the functions (more than $\cC^2$) is stronger than the one obtained by using standard pairs. In addition our argument does not look very dynamical and seems to take too much advantage of the special features of the example at hand, what to do with a non linear map is highly non obvious.

It would then be very desirable to obtain the above results via a different, more dynamical, strategy. In particular it would be nice if we could find a Banach space on which it is possible to study the spectrum of the operator $\cL$ and such that the above properties can be understood as consequences of the spectral picture.

This can be done in various ways. Let us start with a possibility still based on Fourier transform.

%%%%%%%%%%%%%%%%%%%%%%%
\subsection{A simple class of  {\em Sobolev like} norms}\label{sec:sob-toral}\ \newline
To define a Banach space we can first define a norm on $\cC^\infty(\bT^2,\bC)$ and then we obtain the Banach space by completing $\cC^\infty(\bT^2,\bC)$ with respect to such a norm. 

The usual Sobolev norms are $\|h\|_p^2=\sum_{k\in\bZ^2}\kd^p|\hat h_k|^2$ where $\kd=1+\|k\|^2$ and $p\in\bR$. If $p>0$ then a finite norm implies some regularity while if $p<0$ also distributions can have a finite norm. However we have learned that hyperbolic dynamics have very different behaviour depending on the direction. Typically $\cL^n h$ will be a function regular in the unstable directions but with very wild oscillations in the stable direction. Hence along the stable directions we can have convergence only in a weak sense: in the sense of distributions. To handle this problem different strategy have been proposed, the simplest one is to consider anisotropic Sobolev spaces, that is spaces defined by a norm of the type
\begin{equation}\label{eq:sob}
\|h\|_{p\alpha}^2=\sum_{k\in\bZ^2}\kd^{p\alpha(\hat k)}|\hat h_k|^2
\end{equation}
where $p\in\bR_+$, $\hat k=(k_1:k_2)$ is the projectivization of $k=(k_1,k_2)$, that is the equivalence class containing $k$ with respect to the equivalence relation defined by $k\sim k'$ iff there exists $\lambda \in\bR\setminus\{0\}$ such that $k=\lambda k'$. Finally, $\alpha\in \cC^0(\pP,[-1,1])$. In other words $\alpha$ depends only on the direction of the vector $k$. In  the following, to simplify notations, we will write $\alpha(\hat k)$ as $\alpha (k)$.

We have seen that the action of the dynamics in Fourier coefficients is also given by $A k$. It is then natural to consider the dynamics in the projective space $\pP$. Obviously there are two fixed point $v^u$ and $v^s$ (or, rather, their equivalence classes), the first is attractive while the second is repelling. Fix $\nu\in (\lambda^{-1},1)$, it is easy to check that in $\pP$ there exists intervals $I_+\ni v^u$, $I_-\ni v^s$ and a constant $K>0$ such that\footnote{ Of course, $I_+,I_-$ correspond to cones in the vector space $\bR^2$. I will abuse notation an use $I_+,I_-$ also for the cones of the vectors whose equivalence class belongs to $I_+,I_-$, respectively.}
\[
\begin{split}
&\langle A v \rangle\geq \nu^{-2}\langle v \rangle\quad \textrm{ for all } v\in I_+, \,\|v\|\geq K\\
&\langle A v \rangle\leq \nu^2\langle v \rangle\quad \textrm{ for all } v\in I_-,\, \|v\|\geq K.\\
\end{split}
\]
Let $\hat I_\pm=A^{\pm 1} I_\pm\subset I_\pm$.
We choose then an $\alpha$ with value 1 in $\hat I_+$, value $-1$ in $\hat I_-$ and strictly monotone in between (it is possible to be more explicit about $\alpha$ and optimise it in various ways, but I think it is more important to point out that the above qualitative properties suffice). Note that in $\pP\setminus (\hat I_+\cup \hat I_-)$ we have that $d(v,Av)\geq c$ for some fixed constant $c$,\footnote{ The definition of the distance is not really important, for example the angle between the two vectors will do.} thus there exists $\gamma>0$ such that 
\begin{equation}\label{eq:improve}
\alpha (v)-\alpha(A^{-1}v)\geq \gamma\quad \textrm{ for all } v\not \in I_+\cup  I_-.
\end{equation}
This defines the norm.

From equation \eqref{eq:f-dyn}  it follows that, for all $p\in\bR_+$,
\[
\|\cL h\|_{p\alpha}^2=\sum_{k\in\bZ}\kd^{p\alpha(\hat k)}|\hat h_{Ak}|^2=\sum_{k\in\bZ} \left[\frac{\langle A^{-1}k\rangle^{\alpha(A^{-1} k)}}{\kd^{\alpha(k)}}\right]^p \kd^{p\alpha(k)}|\hat h_{k}|^2.
\]
If $k\in \hat I_+$ and $\|v\|\geq K$ then
\[
\frac{\langle A^{-1}k\rangle^{\alpha(A^{-1} k)}}{\kd^{\alpha(k)}}\leq \frac{\langle A^{-1}k\rangle}{\kd}\leq \nu^2.
\]
If $k\in \hat I_-$ and $\|v\|\geq K$,  then  $Ak\in \hat I_-$ and
\[
\frac{\langle A^{-1}k\rangle^{\alpha(A^{-1} k)}}{\kd^{\alpha(k)}}=\frac{\kd}{\langle A^{-1}k\rangle}\leq \nu^2.
\]
If $k\not\in \hat I_-\cup \hat I_+$ then, setting $B=\|A^{-1}\|$ and recalling \eqref{eq:improve},
\[
\frac{\langle A^{-1}k\rangle^{\alpha(A^{-1} k)}}{\kd^{\alpha(k)}}\leq \frac{\langle A^{-1}k\rangle^{\alpha(k)-\gamma}}{\kd^{\alpha(k)}}\leq B \kd^{-\gamma}.
\]
It is then natural to consider the set\footnote{ Note that $\Gamma$ is a finite set.} 
\[
\Gamma=\{k\in\bZ^2\;:\; \kd\leq \max\{[\nu^{-2} B]^{1/\gamma}, K\}=:L\}.
\]
Hence,
\[
\sup_{k\not\in \Gamma}\frac{\langle A^{-1}k\rangle^{\alpha(A^{-1} k)}}{\kd^{\alpha(k)}}\leq \nu^2,
\]
and the weak norm
\[
\|h\|_w^2=\sum_{k\in \Gamma}|\hat h_k|^2.
\]
We can then write
\begin{equation}\label{eq:ly-fourier}
\|\cL h\|_{p\alpha}\leq \sqrt{\nu^{2p}\|h\|_{p\alpha}^2+B \|h\|_w^2}\leq \nu^p\|h\|_{p\alpha}+B^{2p}L^p \|h\|_w.
\end{equation}
\begin{prob} Use equation \eqref{eq:ly-fourier} to obtain a Lasota-Yoke type inequality for the norms $\|\cdot\|_{p\alpha}, \|\cdot\|_{p\beta}$, $\beta<\alpha$, and deduce the quasi compactness of $\cL$ (recall Remark \ref{rem:compact-ball}).
\end{prob}
For the reader amusement, let us deduce quasi-compactness by an alternative argument. Note that setting $Ph(x)=\sum_{k\in \Gamma}\hat h_k e^{2\pi\langle k,x\rangle}$ we have
\[
\|\cL(1-P)h\|_{p\alpha}\leq  \nu^p\|h\|_{p\alpha}.
\]
We can then set $A=\cL P$ and $Q=\cL(1-P)$, then, for each $\mu>\nu^p$, we can write
\[
(\mu\Id-\cL)=(\Id\mu-Q)^{-1}(\Id-A(\Id\mu-Q)^{-1}).
\]
The claim follows then by the Analytic Fredholm alternative. We then conclude that the essential spectrum of $\cL$ when acting on the Banach space obtained by closing $\cC^\infty$ with respect to the norm $\|\cdot\|_{p\alpha}$ is contained in the set $\{z\in\bC\;:\; |z|\leq \nu^p\}$. To study the discrete spectrum and obtain independently that it consists only of $\{1\}$ requires a little extra argument that we postpone to the end of section \ref{sec:geom-toral}, see Lemma \ref{lem:discrete} if you cannot held your curiosity.

The above it is not as precise as our explicit computation (also due to the choice to reduce the technicalities to a bare minimum) but it provides the main idea of a much far reaching approach.

%%%%%%%%%
\subsection{A simple class of {\em geometric} norms}\label{sec:geom-toral}\ \\
We have seen how the anisotropy of the dynamics can be reflected by the norms using a weigh (at time called {\em escape function}) in Fourier transform.
Here we present (always in a simplified manner, adapted to the special case at hand) a different, more geometric, approach that has both advantages (it has been adapted to more general systems, e.g. \cite{BDL}) and disadvantages (for example, the dual of the space is not a space of the same type).
The presentation is a bit more detailed than the one in Section \ref{sec:sob-toral} as we will use it as the base for further generalisations, see Section \ref{sec:hyp}.

Let $\partial_u \vf=\langle v^u, \nabla \vf\rangle$,  fix $\delta >0$, $\vf\in\cC_0^\infty([-\delta,\delta],\bC)$ and $h\in\cC^\infty(\bT^2,\bC)$ define,\footnote{ We use the notation
$\vf^{(q)}(t)=\frac{d^q}{dt^q}\vf (t).$ }
\begin{equation}\label{eq:geo-norm0}
\begin{split}
&|\vf|_q=\sup_{q'\leq q}\sup_{t\in\bR} |\vf^{(q')}(t)|\\
&B_q=\{\vf\in\cC_0^\infty([-\delta,\delta],\bC)\;:\; |\vf|_q\leq 1\}\\
&\|h\|_{p,q}=\sup_{x\in\bT^{2}}\sup_{p'\leq p}\sup_{\vf\in B_q}\int_{-\delta}^{\delta}(\partial^{p'}_uh)(x+tv^s) \cdot \vf(t) dt.
\end{split}
\end{equation}
We will call $\cB^{p,q}$ the closure of $\cC^\infty$ with respect to the above norm. The first thing we want to understand is which kind of objects we obtained by the closure. The next Lemma shows that we are inside the usual space of distributions.
\begin{lem}\label{lem:equivalence}
For each $p,q\in\bN$, $p>0$, we have $i:\cB^{p,q}\to\cC^{q}(\bT^2,\bC)'$, where $i$ is bounded and one-to-one.
\end{lem}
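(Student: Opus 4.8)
The statement splits into boundedness and injectivity, and I would treat them in that order.

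\emph{Boundedness.} The map $i$ is defined on the dense subspace $\cC^\infty$ by $i(h)(g)=\int_{\bT^2}hg\,dx$, so it is enough to prove $|\int_{\bT^2}hg\,dx|\le C\|g\|_{\cC^q}\|h\|_{p,q}$ for $h,g\in\cC^\infty$ and then extend by density in $g$ (in $\cC^q$) and in $h$ (in $\cB^{p,q}$). I would fix a finite smooth partition of unity $\{\rho_j\}_{j=1}^N$ of $\bT^2$ whose members are each supported in a chart in which points are written uniquely as $x_j+av^s+bv^u$ with $|a|<\delta/2$ and $|b|<\eta$; this exists because $\{v^s,v^u\}$ is an orthonormal basis and $\bT^2$ is compact. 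Writing $\int hg=\sum_j\int hg\rho_j$ and, in each chart, using Fubini in the unit-Jacobian coordinates $(a,b)$, the inner integral in $a$ is a leafwise pairing $\int_{-\delta}^{\delta}h(x_j+bv^u+av^s)\,\psi_{j,b}(a)\,da$ against $\psi_{j,b}(a):=(g\rho_j)(x_j+av^s+bv^u)\in\cC_0^\infty((-\delta,\delta))$, which satisfies $|\psi_{j,b}|_q\le C_j\|g\|_{\cC^q}$ by the chain rule; the $p'=0$ term in the definition of $\|\cdot\|_{p,q}$ bounds this leafwise pairing by $C_j\|g\|_{\cC^q}\|h\|_{p,q}$, and integrating in $b\in(-\eta,\eta)$ and summing over $j$ gives the claim. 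Thus $i(h)\in\cC^q(\bT^2,\bC)'$ with $\|i(h)\|\le C\|h\|_{p,q}$, and $i$ extends to a bounded operator $\cB^{p,q}\to\cC^q(\bT^2,\bC)'$. (This part uses neither $p>0$ nor the unstable derivatives.)

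\emph{Injectivity, low order.} For $x\in\bT^2$, $0\le p'\le p$ and $\vf\in\cC_0^\infty((-\delta,\delta))$ put $\ell_{x,\vf,p'}(h):=\int_{-\delta}^{\delta}(\partial_u^{p'}h)(x+tv^s)\vf(t)\,dt$. By definition of the norm $|\ell_{x,\vf,p'}(h)|\le|\vf|_q\|h\|_{p,q}$, so $\ell_{x,\vf,p'}$ extends continuously to $\cB^{p,q}$, and a routine completion argument shows that for every $h\in\cB^{p,q}$ one still has $\|h\|_{p,q}=\sup\{|\ell_{x,\vf,p'}(h)|:x\in\bT^2,\ p'\le p,\ \vf\in B_q\}$. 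Hence it suffices to show that $i(h)=0$ forces all $\ell_{x,\vf,p'}(h)$ to vanish. For $p'\le p-1$ I would mollify in the unstable direction: with $\chi_\eta\in\cC_0^\infty((-\eta,\eta))$, $\chi_\eta\ge0$, $\int\chi_\eta=1$, integrating by parts $p'$ times gives, for $h\in\cC^\infty$,
\[
\ell_{x,\vf,p'}(h)=(-1)^{p'}\int_{\bT^2}h\,g_{\eta,x,\vf}\,dx+R_\eta,\qquad |R_\eta|\le\eta\,|\vf|_q\,\|h\|_{p'+1,q},
\]
where $g_{\eta,x,\vf}\in\cC^\infty(\bT^2)$ equals $\vf(a)\chi_\eta^{(p')}(b)$ in the chart around $x$ (a globally smooth function, so no boundary terms arise), and $R_\eta$ is produced by the mean value theorem applied to $\partial_u^{p'}h$ along $v^u$ — one extra unstable derivative, still controlled because $p'+1\le p$. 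Both sides of the identity are $\|\cdot\|_{p,q}$-continuous in $h$, so it persists on $\cB^{p,q}$; if $i(h)=0$ the integral term vanishes for every $\eta$, whence $\ell_{x,\vf,p'}(h)=0$.

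\emph{The top derivative --- the main obstacle --- and the conclusion.} The case $p'=p$ cannot be handled this way, because the remainder above would involve the uncontrolled $\partial_u^{p+1}h$; this is exactly where the hypothesis $p>0$ is used. For fixed $x,\vf$ I would consider $H_h(u):=\ell_{x+uv^u,\vf,p-1}(h)$, which for $h\in\cC^\infty$ is $\cC^1$ in $u$ with $H_h'(u)=\ell_{x+uv^u,\vf,p}(h)$ by differentiation under the integral. Assume $i(h)=0$; by the low-order step (applied with index $p-1\le p-1$, legitimate since $p\ge1$) $H_h\equiv0$. Choosing $h_n\in\cC^\infty$ with $h_n\to h$ in $\cB^{p,q}$, the bounds $|H_{h_n}(u)-H_h(u)|\le|\vf|_q\|h_n-h\|_{p,q}$ and $|H_{h_n}'(u)-H_{h_m}'(u)|\le|\vf|_q\|h_n-h_m\|_{p,q}$ are uniform in $u$, so $H_{h_n}\to0$ uniformly while $H_{h_n}'\to G$ uniformly for some continuous $G$ with $G(u)=\ell_{x+uv^u,\vf,p}(h)$; passing to the limit in $H_{h_n}(u)-H_{h_n}(0)=\int_0^uH_{h_n}'$ yields $\int_0^uG=0$ for all $u$, hence $G\equiv0$ and in particular $\ell_{x,\vf,p}(h)=G(0)=0$. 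Combined with the low-order case, every $\ell_{x,\vf,p'}(h)$ vanishes, so $\|h\|_{p,q}=0$ and $i$ is one-to-one. Everything outside this last paragraph is bookkeeping with Fubini, mollifiers and integration by parts; the genuinely delicate point is recovering the top unstable derivative, which is what forces $p\ge1$.
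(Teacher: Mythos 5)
Your proposal is correct, and the overall strategy coincides with the paper's: boundedness comes from a chart partition of unity and Fubini, reducing $\int h\vf$ to a sum of leafwise pairings controlled by the $p'=0$ part of the norm; injectivity comes from testing $i(h)$ against a family of test functions concentrated in the unstable direction around a base point and passing to a limit. Your observation that boundedness uses neither $p>0$ nor unstable derivatives matches the paper's argument.

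The one place where you genuinely diverge, and where your version is more careful, is the top unstable derivative $p'=p$. The paper defines $\vf_\ve(y)=\vf(\langle y-x,v^s\rangle)\,g(\langle y-x,v^u\rangle\ve^{-1})\ve^{-1}$, pairs $i(h)$ against $\partial_u^{p'}\vf_\ve$, integrates by parts, and says "arguing as before" the error is $\cO(\ve)$; but the naive mean-value bound, which for $p'=0$ reads $\cO(\ve\|h\|_{1,q})$, would become $\cO(\ve\|h\|_{p+1,q})$ at $p'=p$, and that quantity is not controlled on $\cB^{p,q}$. To make the paper's last step rigorous one must instead argue via the continuity of $u\mapsto\ell_{x+uv^u,\vf,p}(h)$ (a uniform limit of continuous functions, since $|\ell_{y,\vf,p}(h_n)-\ell_{y,\vf,p}(h_m)|\le\|h_n-h_m\|_{p,q}$ uniformly in $y$), which gives $\int db\,\ve^{-1}g(b/\ve)\,\ell_{x+bv^u,\vf,p}(h)\to\ell_{x,\vf,p}(h)$. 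Your route achieves the same end more transparently: first kill $\ell_{\cdot,\vf,p'}(h)$ for $p'\le p-1$ by mollification where the error is genuinely $\cO(\eta\|h\|_{p,q})$, then observe that $H_h(u)=\ell_{x+uv^u,\vf,p-1}(h)\equiv 0$, that $H_{h_n}'=\ell_{x+uv^u,\vf,p}(h_n)$ converges uniformly to a continuous $G$, and conclude $G\equiv 0$ from $0=H_{h_n}(u)-H_{h_n}(0)=\int_0^u H_{h_n}'$. This is a small but real improvement in clarity: it isolates exactly why $p\ge 1$ is needed, whereas the paper leaves that implicit.
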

\begin{proof}
As usual, define $i:\cC^\infty(\bT^2,\bC)\to \cC^{q}(\bT^2,\bC)'$ by $i(h)(\vf)=\int_{\bT^2}\vf h$.

Let $\{\phi_i\}_{i=1}^N$ be a smooth partition of unity such that $\supp\phi_i$ is contained in a ball of radius $\delta/2$ with centre $x_i$.
Let $h\in\cC^\infty(\bT^2,\bC)$, for each $\vf\in\cC^q(\bT^2,\bC)$ we have
\[
\begin{split}
\left|i(h)(\vf)\right|&=\left|\int_{\bT^2}h\vf\right|\leq \sum_i\left|\int_{\bT^2}h\vf\phi_i\right|\\
&\leq\sum_i\int_{-\delta}^{\delta}ds \left|\int_{-\delta}^{\delta} dt h(x_i+sv^s+tv^u)(\vf\phi_i)(x_i+sv^s+tv^u)\right|\\
&\leq 2\delta\|h\|_{0,q}\sum_i|\vf\phi_i|_{\cC^q}\leq C_{\delta,q} \|h\|_{p,q} |\vf|_{\cC^q}.
\end{split}
\] 
From which it follows that $i$ is bounded and can be extended to $\cB^{p,q}$. 

Fix $g\in\cC_0^\infty([-1,1],\bR_+)$, $\int g=1$. For each $x\in\bT^2$, $\vf\in\cC_0^\infty([-\delta,\delta],\bC)$ and $\ve>0$ define 
\[
\vf_\ve(y)=\vf(\langle y-x,v^s\rangle )g(\langle y-x,v^u\rangle\ve^{-1})\ve^{-1}.
\]
Then, for $h\in\cC^\infty(\bT^2,\bC)$ we have
\[
\begin{split}
\int h\vf_\ve&=\int ds g(s\ve)\ve^{-1}\int dt h(x+sv^u+tv^s) \vf(t)\\
&=\int dt\, h(x+tv^s) \vf(t)+\cO(\ve\|h\|_{1,q}).
\end{split}
\]
Finally, suppose $i(h)=0$ for some $h\in\cB^{p,q}$. Let $h_n\subset \cC^\infty$ such that $h_n\to h$ in $\cB^{p,q}$, then
\[
\begin{split}
0=i(h)(\vf_\ve)&=\lim_{n\to\infty}\int h_n\vf_\ve\\
&=\lim_{n\to\infty}\int dt\, h_n(x+tv^s)\vf(t)+\cO(\ve\|h_n\|_{1,q})\\
&=\int dt\, h(x+tv^s) \vf(t)+\cO(\ve\|h\|_{1,q}).
\end{split}
\]
Taking the limit $\ve\to 0$ we obtain
\[
0=\int dt\, h(x+tv^s) \vf(t).
\]
Also, since $i(h)(\partial_{u}^{p'}\vf_\ve)=0$, arguing as before and integrating by part yields, for all $p'\leq p$,
\[
0=\int dt\, \partial_u^{p'}h(x+tv^s) \vf(t).
\]
Taking the sup on $x$ we obtain $\|h\|_{p,q}=0$. Hence $i$ is injective.
\end{proof}
Before continuing it is convenient to make sure that the derivative acts in the natural way on the spaces $\cB^{p,q}$.
\begin{lem}\label{lem:embed} For each $p,q\in\bN$ the operator $\partial_u$ is bounded as an operator from $\cB^{p+1,q}$ to $\cB^{p,q}$ and $\partial_s$ is bounded as an operator from $\cB^{p,q}$ to $\cB^{p,q+1}$.
Moreover, their kernels consists of the constants.
\end{lem}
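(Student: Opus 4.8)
The plan is to read both boundedness claims straight off the definition \eqref{eq:geo-norm0}, using that $v^u$ and $v^s$ are \emph{constant} vector fields on $\bT^2$. Two elementary consequences of this are used repeatedly: $\partial_u$ and $\partial_s$ commute, and for any smooth $g$ one has $\tfrac{d}{dt}\bigl[g(x+tv^s)\bigr]=(\partial_s g)(x+tv^s)$, i.e. differentiating along the segment $t\mapsto x+tv^s$ reproduces $\partial_s$.

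For $\partial_u$, note that $\partial_u^{p'}(\partial_u h)=\partial_u^{p'+1}h$, so for every $p'\le p$, every $x\in\bT^2$ and every $\vf\in B_q$ the quantity $\int_{-\delta}^{\delta}(\partial_u^{p'}\partial_u h)(x+tv^s)\,\vf(t)\,dt$ is one of the integrals — the one with derivative order $p'+1\le p+1$ — appearing in the supremum defining $\|h\|_{p+1,q}$. Taking the supremum over $p'$, $x$ and $\vf$ gives $\|\partial_u h\|_{p,q}\le\|h\|_{p+1,q}$ for $h\in\cC^\infty$, hence for all $h\in\cB^{p+1,q}$ by density, which produces the bounded operator $\partial_u:\cB^{p+1,q}\to\cB^{p,q}$ (of norm at most $1$).

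For $\partial_s$ I would combine commutativity with an integration by parts along a $v^s$-segment: for $p'\le p$, $x\in\bT^2$ and $\vf\in B_{q+1}$,
\begin{align*}
\int_{-\delta}^{\delta}(\partial_u^{p'}\partial_s h)(x+tv^s)\,\vf(t)\,dt
&=\int_{-\delta}^{\delta}\tfrac{d}{dt}\bigl[(\partial_u^{p'}h)(x+tv^s)\bigr]\vf(t)\,dt\\
&=-\int_{-\delta}^{\delta}(\partial_u^{p'}h)(x+tv^s)\,\vf'(t)\,dt,
\end{align*}
the boundary terms vanishing since $\vf\in\cC_0^\infty([-\delta,\delta],\bC)$. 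As $|\vf'|_q\le|\vf|_{q+1}\le 1$ we have $\vf'\in B_q$, so the last integral is bounded in absolute value by $\|h\|_{p,q}$; taking the supremum over $p'$, $x$ and $\vf\in B_{q+1}$ gives $\|\partial_s h\|_{p,q+1}\le\|h\|_{p,q}$, again extended from $\cC^\infty$ to $\cB^{p,q}$ by density.

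For the kernels, I would identify the spaces in question with spaces of distributions via Lemma~\ref{lem:equivalence} (the only thing to check being that the distributional derivative agrees with the $\cB^{p,q}$-derivative on the dense set $\cC^\infty$, hence everywhere, by continuity), so that an element $h$ of $\ker\partial_u$ has well-defined Fourier coefficients $\{\hat h_k\}_{k\in\bZ^2}$ and $\partial_u h=0$ forces $\langle k,v^u\rangle\,\hat h_k=0$ for all $k$. Since $v^u$ is an eigenvector of $A\in SL(2,\bZ)$ for the irrational eigenvalue $\lambda$, its slope is irrational, so $\langle k,v^u\rangle\neq 0$ for every $k\neq 0$; hence $\hat h_k=0$ for $k\neq 0$ and $h$ is constant, and the same computation with $v^s$ disposes of $\ker\partial_s$. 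I do not anticipate any genuine obstacle: the only care needed is in passing from $\cC^\infty$ to the completions and in matching the abstract derivative on $\cB^{p,q}$ with the distributional one so the Fourier step is legitimate.
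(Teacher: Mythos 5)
Your boundedness argument is exactly the paper's (integration by parts along the $v^s$-segment for $\partial_s$, nothing to do for $\partial_u$), and it is correct, including the observation that $\vf\in B_{q+1}\Rightarrow\vf'\in B_q$.

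For the kernel you take a genuinely different route. The paper works geometrically: it uses the Lipschitz regularity of the auxiliary functions $h_\vf(x)=\int_{-\delta}^{\delta}h(x+tv^s)\vf(t)\,dt$ established earlier in the proof (plus the ergodicity of the linear flow $y\mapsto y+tv^u$) to show $h_\vf$ is constant, and then a partition-of-unity/Fubini computation to conclude that $h-\int h$ annihilates all test functions, hence vanishes in $\cB^{p+1,q}$ by the injectivity of $i$ from Lemma~\ref{lem:equivalence}. You instead pass through Lemma~\ref{lem:equivalence} to regard $h$ as a distribution, take Fourier coefficients, and use that $\langle k,v^u\rangle\neq0$ for $k\neq0$ (irrational slope) to force $\hat h_k=0$ for $k\neq0$. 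Both arguments ultimately hinge on the same irrationality fact, and your Fourier version is shorter and cleaner here; the one thing you rightly flagged — that the $\cB^{p,q}$ derivative agrees with the distributional one under $i$ — does hold by the density/continuity argument you sketch, and injectivity of $i$ then upgrades ``$i(h)$ is the constant distribution'' to ``$h$ is constant in $\cB^{p+1,q}$''. The trade-off is that the paper's more geometric proof is the one that survives the passage to general nonlinear Anosov maps in Section~\ref{sec:hyp}, where Fourier series are no longer available; for the toral-automorphism interlude yours is a perfectly good (arguably more transparent) alternative.
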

\begin{proof}
The boundedness follows immediately from the definition of the norms (and integration by part in the case of $\partial_s$).  

Next, for each $h\in\cC^\infty$, $x\in\bT^2$ and $\vf\in\cC_0^{q+1}([-\delta,\delta],\bC)$ let us define
\[
h_\vf(x)=\int_{-\delta}^{\delta}h(x+tv^s)\vf(t) dt.
\]
Then
\[
\begin{split}
&\partial_uh_\vf(x)=\int_{-\delta}^{\delta}\partial_u h(x+tv^s)\vf(t) dt\\
&\partial_s h_\vf(x)=\int_{-\delta}^{\delta}\frac d{dt} h(x+tv^s)\vf(t) dt=-\int_{-\delta}^{\delta} h(x+tv^s)\vf'(t) dt.
\end{split}
\]
It follows that $\|\nabla h_\vf\|_\infty\leq \|h\|_{1,q}|\vf|_{q+1}$. Hence, for $h\in\cB^{p,q}$ and $\vf\in\cC^{q+1}$ we have that $h_\vf$ is Lipschitz (it follows by density).

We can now study the equation 
\[
\partial_uh =0
\]
for $h\in\cB^{p+1,q}$. Let $\vf\in\cC^\infty$, then have $h_\vf\in\cC^1$ and $\partial_uh_\vf=0$. This implies $h_\vf=\textrm{const}$.
Accordingly, for each set $Q_{x,\delta}=\{x+sv^s+tv^u\;:\; t,s\in[-\delta,\delta]\}$ and $\vf\in\cC_0^\infty(Q_{x,\delta},\bC)$,
\[
\begin{split}
\int_{\bT^2} h\vf&=\int_{-\delta}^\delta dt\int_{-\delta}^\delta ds h(x+tv^u+sv^s)\vf(x+tv^u+sv^s)\\
&=\int_{-\delta}^\delta dt\int_{-\delta}^\delta ds h(x+sv^s)\vf(x+tv^u+sv^s)
\end{split}
\]
We can then set $\tilde \vf_x(s)=\int_{-\delta}^\delta dt\vf(x+tv^u+sv^s)$ and obtain
\[
\begin{split}
\int_{\bT^2} h\vf&=h_{\tilde\vf_x}(x)=\int_{\bT^2} h_{\tilde\vf_x}(y) dy=\int_{\bT^2}dy\int_{-\delta}^\delta ds\, h(y+sv^s)\tilde\vf_x(s)\\\
&=\int_{\bT^2} h\int_{\bT^2} \vf.
\end{split}
\]
This shows that $h-\int h$ is zero as a distribution, but then, by Lemma \ref{lem:equivalence} it is zero in $\cB^{p+1,q}$, thus the Lemma. Similar arguments holds for the study of the kernel of $\partial_s$.
\end{proof}

\begin{lem}\label{lem:compact} For each $p,q\in\bN$ we have that $\cB^{p+1,q-1}$ embeds compactly in $\cB^{p,q}$.
\end{lem}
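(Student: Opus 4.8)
The inclusion is bounded for free: directly from \eqref{eq:geo-norm0} one has $\|h\|_{p,q}\le\|h\|_{p+1,q-1}$ for every $h\in\cC^\infty$, since the supremum defining $\|\cdot\|_{p,q}$ is taken over fewer values of $p'$ and over the smaller ball $B_q\subset B_{q-1}$. The plan is then the standard one for such statements: show that every sequence $\{h_n\}$ with $\|h_n\|_{p+1,q-1}\le 1$ has a subsequence which is Cauchy for $\|\cdot\|_{p,q}$; as $\cB^{p,q}$ is complete this yields compactness of the embedding. Two independent ``gains'' will be used — one order of regularity transverse to the stable leaves ($\partial_u$, i.e.\ $p+1$ versus $p$) and one order of regularity of the admissible test functions ($q-1$ versus $q$) — and each plays a different role.

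As a harmless preliminary reduction, note that replacing in \eqref{eq:geo-norm0} the test functions $\vf$ by ones supported in $[-\delta/2,\delta/2]$ yields an equivalent norm: given $\vf\in B_q$, write $\vf=\sum_{m}\vf\chi_m$ for a fixed finite smooth partition of unity of $[-\delta,\delta]$ whose pieces have length $\le\delta/2$, and observe that each term $\int(\partial_u^{p'}h)(x+tv^s)\vf(t)\chi_m(t)\,dt$ becomes, after recentering the base point $x$ along $v^s$, an admissible term for the narrower norm, with $|\vf\chi_m|_q\le\Const$. From now on all test functions are supported in $[-\delta/2,\delta/2]$; this is precisely what makes translations of the base point along $v^s$ (which amount to translating $\vf$) stay inside the window.

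Now fix $\{h_n\}$ with $\|h_n\|_{p+1,q-1}\le1$. By Lemma~\ref{lem:equivalence} the $h_n$ are bounded in $\cC^{q-1}(\bT^2)'$, so by Banach--Alaoglu and separability of $\cC^{q-1}(\bT^2)$ we may pass to a subsequence, still called $h_n$, along which $\langle h_n,\Phi\rangle$ converges for every $\Phi\in\cC^\infty(\bT^2)$. I claim this subsequence is Cauchy in $\|\cdot\|_{p,q}$. Put $g=h_n-h_m$, so $\|g\|_{p+1,q-1}\le2$, and write $\ell(g;x,p',\vf):=\int_{-\delta}^{\delta}(\partial_u^{p'}g)(x+tv^s)\vf(t)\,dt$. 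Uniformly over such $g$ this quantity is (i) Lipschitz in $\vf$ for the $\cC^{q-1}$ norm, with constant $\le\|g\|_{p,q-1}\le2$; and (ii) Lipschitz in the base point $x$ with constant $\Const$ — moving $x$ along $v^u$ costs $\le\|g\|_{p+1,q-1}|\cdot|$ by differentiating under the integral sign (this is where the extra $\partial_u$, hence the index $p+1$, is spent, cf.\ Lemma~\ref{lem:embed}), while moving $x$ along $v^s$ by $\sigma$ just replaces $\vf$ by $\vf(\cdot-\sigma)$, which stays supported in $[-\delta,\delta]$ and changes by $\le|\sigma|$ in $\cC^{q-1}$. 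On the other hand $B_q$ is precompact in $\cC^{q-1}$ by Arzel\`a--Ascoli (bounded $q$-th derivatives give equi-Lipschitz $(q-1)$-th derivatives — this is where the index $q-1$ is spent), and $\bT^2\times\{0,\dots,p\}$ is compact. Hence, given $\eta>0$, there is a finite family $\{(x_j,p'_j,\vf_j)\}_{j=1}^{J}$ with $\vf_j\in B_q$ smooth, such that $|\ell(g;x,p',\vf)|\le\max_j|\ell(g;x_j,p'_j,\vf_j)|+\eta/2$ for all admissible $(x,p',\vf)$ and all $g$ in the bounded family. Finally, as in the proof of Lemma~\ref{lem:equivalence}, for each $j$ and small $\ve>0$ one smooths the leaf $\{x_j+tv^s\}$ in the $v^u$-direction at scale $\ve$ and integrates the $p'_j$ derivatives by parts, producing $\Psi_{j,\ve}\in\cC^\infty(\bT^2)$ with $|\ell(g;x_j,p'_j,\vf_j)-\langle g,\Psi_{j,\ve}\rangle|\le\Const\,\ve\,\|g\|_{p+1,q-1}$; choose $\ve$ so this is $<\eta/4$ for every $j$. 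Then $\|h_n-h_m\|_{p,q}\le\max_j|\langle h_n-h_m,\Psi_{j,\ve}\rangle|+3\eta/4$, and since the finitely many sequences $\langle h_n,\Psi_{j,\ve}\rangle$ converge, the right-hand side is $<\eta$ for $n,m$ large. Thus $\{h_n\}$ is Cauchy in $\cB^{p,q}$, and the embedding is compact.

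The genuinely delicate point is not any single estimate but the bookkeeping that keeps the two gains separate: transverse regularity must be used only to produce equicontinuity in the base point, and leaf-wise regularity only to make the test-function ball precompact in the weaker topology. The one technical nuisance — that translating the base point along $v^s$ could push a test function out of the window $[-\delta,\delta]$ — is dealt with once and for all by the equivalence-of-norms reduction in the second paragraph; arguing instead with Cauchy subsequences (rather than first identifying the weak limit as an element of $\cB^{p,q}$) avoids having to invoke lower semicontinuity of $\|\cdot\|_{p+1,q-1}$ under the weak-$*$ topology.
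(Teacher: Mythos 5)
Your argument is correct and follows the same plan as the paper's: reduce to a totally-bounded family of evaluations by combining a finite $\ve$-net of base points (equicontinuity in $x$ comes from the extra $\partial_u$-derivative) with precompactness of the test-function ball $B_q$ in the weaker $\cC^{q-1}$ topology (Arzel\`a–Ascoli), then extract a subsequence on which the finitely many distinguished evaluations converge. The only differences are cosmetic bookkeeping: the paper handles motion of the base point in the $v^s$-direction by integrating by parts to produce $\partial_s h_\vf(x)=-\int h(x+tv^s)\vf'(t)\,dt$ rather than by shrinking the support of test functions and translating $\vf$ (both spend the same derivative of $\vf$); and the paper extracts the convergent subsequence by a direct diagonal argument over $\ve_m=2^{-m}$ using boundedness of the scalar sequences $(h_n)_{\vf_j}(x_i)$, rather than by first invoking weak-$*$ compactness in $\cC^{q-1}(\bT^2)'$ and then reconstructing $\ell(g;x_j,p'_j,\vf_j)$ from smooth pairings $\langle g,\Psi_{j,\ve}\rangle$ — the latter detour is valid but slightly longer.
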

\begin{proof}
Since the spaces are separable, it suffices to prove that each sequence $\{h_n\}\subset \cC^\infty(\bT^2,\bC)$, $\|h_n\|_{p+1,q-1}\leq 1$, admits a convergent subsequence.
Using the language of Lemma \ref{lem:embed}, for each $\ve>0$, let $\{x_i\}_{i\in I_\ve}$ be a finite $\ve$ dense set, then for each $h\in\cC^\infty$, $\vf\in B_{q+1}$ there exists $x_i$ such that $\|x-x_i\|\leq \ve$ and
\[
|h_\vf(x)-h_\vf(x_i)|\leq \ve \|\nabla h_\vf\|_\infty\leq \ve \|h\|_{1,q}.
\]
On the other hand, if $|\vf-\tilde\vf|_q\leq \ve$, then
\[
|h_\vf(x_i)-h_{\tilde \vf}(x_i)|\leq \ve \|h\|_{0,q}.
\]
Finally, since the set $B_{q+1}$ is compact in $B_{q}$, there exists a finite set $\{\vf_j\}_{j\in J_\ve}\subset B_{q+1}$ such that, for all $\vf\in B_{q+1}$, $\inf_j|\vf-\vf_j|_q\leq \ve$.
Accordingly,
\[
\|h\|_{p,q+1}\leq \sup_{(i,j)\in I_\ve\times J_\ve} |h_{\vf_j}(x_i)|+\ve\|h\|_{p+1, q}.
\]
We can then conclude by the usual diagonal trick: Note that, for each $\ve>0$, the set $\{(h_n)_{\vf_j}(x_i)\}$ is bounded, thus contained in a compact set, hence it is possible to extract a subsequence $\{h_{n_k}\}$ such that each sequence $(h_{n_k})_{\vf_j}(x_i)$ is converging. Accordingly, we can set $\ve_m=2^{-m}$, and construct recursively the sequences $\{h_{n_{m,k}}\}\subset\{h_{n_{m-1,k}}\}$, $\{h_{n_{0,k}}\}=\{h_k\}$ such that for each $m$ there exists $K_m\in\bN$ such that, for all $k, k'\geq K_m$,
\[
\|h_{n_{m,k}}-h_{n_{m,k'}}\|\leq 2\ve_{m}.
\]
We can then choose the sequence $\tilde h_m=h_{n_{m,K_m}}$, it is easy to check that this is a converging subsequence.
\end{proof}
We have thus described the Banach space, it is now time to study how the transfer operator acts on it.
\begin{lem}[Lasota-Yorke type inequality] For each $h\in\cC^\infty$ and $p,q\in\bN$ we have
\[
\begin{split}
&\|\cL^n h\|_{p,q}\leq \Const \|h\|_{p,q}\\
&\|\cL^n h\|_{p,q}\leq \Const \lambda^{-\min\{p,q\}n}\|h\|_{p,q}+\Const \|h\|_{p-1,q+1}.
\end{split}
\]
\end{lem}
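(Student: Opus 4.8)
\section*{Proof proposal}

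The plan is to reduce everything to a chain rule for $\partial_u$ under $\cL$ together with a subdivision of long stable segments into unit ones. Since $f^{-n}$ is the linear map $A^{-n}$, $A$ is symmetric, $A^{-n}v^u=\lambda^{-n}v^u$ and $A^{-n}v^s=\lambda^{n}v^s$, a one line computation gives $\partial_u^{p'}\cL^n=\lambda^{-np'}\cL^n\partial_u^{p'}$, hence for $p'\le p$, $x\in\bT^2$ and $\vf\in B_q$,
\begin{align*}
\int_{-\delta}^{\delta}(\partial_u^{p'}\cL^n h)(x+tv^s)\vf(t)\,dt
&=\lambda^{-np'}\int_{-\delta}^{\delta}(\partial_u^{p'}h)\big(A^{-n}x+t\lambda^{n}v^s\big)\vf(t)\,dt\\
&=\lambda^{-n(p'+1)}\int_{-\delta\lambda^{n}}^{\delta\lambda^{n}}(\partial_u^{p'}h)(A^{-n}x+sv^s)\,\vf(\lambda^{-n}s)\,ds,
\end{align*}
the last line by the change of variables $s=\lambda^{n}t$: straightening the pull-back leaf costs a stable segment of length $2\delta\lambda^{n}$ and a Jacobian $\lambda^{-n}$. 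I would then fix once and for all a smooth partition of unity $\{\rho_i\}$ of $\bR$ with $\rho_i$ supported in an interval of length $2\delta$ centred at $c_i=i\delta$, and write $\vf(\lambda^{-n}\,\cdot\,)=\sum_i\psi_i$, $\psi_i=\vf(\lambda^{-n}\,\cdot\,)\rho_i$: there are at most $\Const\lambda^{n}$ nonzero terms and, after recentring at $c_i$, each $\psi_i(c_i+\,\cdot\,)$ is a test function supported in $[-\delta,\delta]$.

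For the top order $p'=p$ I would bound each recentred piece by $\|h\|_{p,q}\,|\psi_i(c_i+\,\cdot\,)|_q\le\Const\|h\|_{p,q}$, using $\lambda>1$, so that every derivative falling on $\vf(\lambda^{-n}\,\cdot\,)$ only produces a harmless factor $\lambda^{-n}\le1$. Summing, the $\Const\lambda^{n}$ pieces are absorbed by the Jacobian $\lambda^{-n}$, leaving $\le\Const\lambda^{-np}\|h\|_{p,q}\le\Const\lambda^{-\min\{p,q\}n}\|h\|_{p,q}$ since $p\ge\min\{p,q\}$. The same crude estimate, applied to every $p'\le p$, gives at once the first inequality $\|\cL^n h\|_{p,q}\le\Const\|h\|_{p,q}$.

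For $p'\le p-1$ there is no help from $\lambda^{-np'}$ and one must reach the weaker norm $\|h\|_{p-1,q+1}$. The step I expect to be the crux is to split the test function by mollification at the scale $\eta=\lambda^{-n(q+1)}$: with $k_\eta(s)=\eta^{-1}k(s/\eta)$ for a fixed $k\in\cC_0^\infty$, write $\vf=\vf_1+\vf_2$, $\vf_1=\vf*k_\eta$, so that $|\vf_1|_q\le1$, $|\vf_1|_{q+1}\le\Const\lambda^{n(q+1)}$, while $|\vf_2|_q\le\Const$ and $|\vf_2|_{q-1}\le\Const\lambda^{-n(q+1)}$. In the $\vf_1$ term the recentred pieces satisfy $|\psi_i(c_i+\,\cdot\,)|_{q+1}\le\Const$, since the only dangerous Leibniz contribution, $\lambda^{-n(q+1)}\vf_1^{(q+1)}$, is $\Const$ by the choice of $\eta$; hence each piece is $\le\Const\|h\|_{p-1,q+1}\,|\psi_i(c_i+\,\cdot\,)|_{q+1}\le\Const\|h\|_{p-1,q+1}$ (here $p'\le p-1$ is used), and after summing against the Jacobian the whole $\vf_1$ term is $\le\Const\|h\|_{p-1,q+1}$. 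In the $\vf_2$ term the recentred pieces satisfy $|\psi_i(c_i+\,\cdot\,)|_q\le\Const\lambda^{-nq}$ (the $q$-th order Leibniz contribution is $\lambda^{-nq}\vf_2^{(q)}$, all others carry the tiny factor $|\vf_2|_{q-1}$), so each piece is $\le\Const\lambda^{-nq}\|h\|_{p,q}$ and the whole $\vf_2$ term is $\le\Const\lambda^{-nq}\|h\|_{p,q}\le\Const\lambda^{-\min\{p,q\}n}\|h\|_{p,q}$. Taking the supremum over $p'\le p$, $x$ and $\vf\in B_q$ of the two combined bounds yields the Lasota--Yorke inequality.

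Everything but the last paragraph is routine: the chain rule, the change of variables, and the Leibniz bookkeeping. The genuinely delicate point is the choice $\eta=\lambda^{-n(q+1)}$: it must be small enough that $\vf_2$, even after being cut into $\Const\lambda^{n}$ pieces, contributes only the contracting term $\Const\lambda^{-\min\{p,q\}n}\|h\|_{p,q}$, and large enough that $\vf_1$ stays in a bounded subset of $\cC^{q+1}$ once its argument is rescaled by $\lambda^{-n}$; this single value balances the two demands, and it is here that $\lambda>1$ and the fixed piece length $\delta$ enter. (When $p=0$ only the boundedness statement is asserted, and it is the $p'=0$ case above; when $q=0$ there is no contraction and the inequality follows by simply adding the nonnegative term $\Const\|h\|_{p-1,1}$ to the boundedness bound.)
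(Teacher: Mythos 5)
Your proposal is correct and follows essentially the same route as the paper's proof: change of variables to straighten the leaf (Jacobian $\lambda^{-n}$), subdivision of the long stable segment into $O(\lambda^n)$ unit pieces by a partition of unity, the commutation $\partial_u^{p}\cL^n=\lambda^{-np}\cL^n\partial_u^{p}$ to handle the top order $p'=p$, and a split of the rescaled test function to reach the weak norm $\|h\|_{p-1,q+1}$ on the lower orders. The only difference is the technical device used for that split: the paper subtracts from $\vf(\lambda^{-n}\cdot)$, on each piece, its Taylor polynomial of degree $q-1$ at the centre (the polynomial part is controlled in $\cC^{q+1}$ by the weak norm, the remainder is $O(\lambda^{-nq})$ in $\cC^q$), whereas you mollify $\vf$ at scale $\eta=\lambda^{-n(q+1)}$ and use $\vf_1=\vf*k_\eta$ and $\vf_2=\vf-\vf_1$; the two choices are interchangeable and yield the same estimates.
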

\begin{proof}
Let $h\in\cC^\infty$ and $\vf\in\cC_0^{q}([-a,a],\bC)$, then
\[
\begin{split}
\int_{a}^a(\cL^nh)(x+tv^s) h\vf(t)dt&=\int_{-a}^a h(x+t\lambda^n v^s)\vf(t) dt\\
&=\lambda^{-n}\int_{-\lambda^n a}^{\lambda^n a} h(x+tv^s)\vf(\lambda^{-n} t) dt.
\end{split}
\]
Next, we consider a $\cC^\infty$ partition of unity $\{\phi_i\}$ of $\bR$ such that the elements have support of size $\delta$ and $\|\phi_i\|_{\cC^{q+1}}\leq C$, for some fixed $C>0$. Clearly $[-\lambda^n a,\lambda^n a]$ intersects, at most, $4\lambda^n+1\leq 5\lambda^n$ such elements. Let $t_i$ belong to the support of $\phi_i$. Then
 \begin{equation}\label{eq:first-ly}
\begin{split}
\left|\int_{a}^a(\cL^nh)(x+tv^s) h\vf(t)dt\right|&\leq \sum_i\lambda^{-n}\left|\int_{t_i-\delta}^{t_i+\delta} h(x+tv^s)\vf(\lambda^{-n} t) \phi_i(t)dt\right|\\
&=\sum_i\lambda^{-n}\|h\|_{0,q}\leq 5\|h\|_{0,q}.
\end{split}
\end{equation}
This proves the first inequality of the Lemma for $p=0$. To treat $p>0$ define $\vf_i(t)=\sum_{j=0}^{q-1}\frac{\vf^{j}(\lambda^{-n}t_i)}{j!}\lambda^{-nj}(t-t_i)^j$ and redo the above computation as follows
\[
\begin{split}
\int_{a}^a(\cL^nh)(x+tv^s) h\vf(t)dt&=\sum_i\lambda^{-n}\int_{t_i-\delta}^{t_i+\delta} h(x+tv^s)\vf(\lambda^{-n} t) \phi_i(t)dt\\
&=\sum_i\lambda^{-n}\int_{t_i-\delta}^{t_i+\delta} h(x+tv^s)\left[\vf(\lambda^{-n} t)-\vf_i(t)\right] \phi_i(t)dt\\
&\phantom{=}+\sum_i\lambda^{-n}\int_{t_i-\delta}^{t_i+\delta} h(x+tv^s)\vf_i(t) \phi_i(t)dt.
\end{split}
\]
To continue notice that
\[
\left|\int_{t_i-\delta}^{t_i+\delta} h(x+tv^s)\vf_i(t) \phi_i(t)dt\right|\leq C |\vf|_q\|h\|_{0,q+1},
\]
and 
\[
|\vf(\lambda^{-n} \cdot)-\vf(\lambda^{-n}t_i)|_q\leq C|\vf|_q \lambda^{-nq}.
\]
The above yields
\[
\|\cL^nh\|_{0,q}\leq C\lambda^{-nq}\|h\|_{0,q}+C\|h\|_{0,q+1}.
\]
Next, notice that
\[
\int_{a}^a\partial_u^p(\cL^nh)(x+tv^s) h\vf(t)dt=\lambda^{-np}\int_{a}^a(\cL^n[\partial_i^ph])(x+tv^s) h\vf(t)dt
\]
which, remembering \ref{eq:first-ly}, implies
\[
\|h\|_{p,q}\leq 5\lambda^{np}\|h\|_{p,q}+C\sum_{i=0}^{p-1}\lambda^{n(p-i+q)}\|\partial_u^ih\|_{0,q}+C\|h\|_{p-1,q+1}
\]
which proves the Lemma.
\end{proof}
The above, together with Lemma \ref{lem:compact}, allows to apply Theorem \ref{thm:hennion} and conclude that the essential spectrum of $\cL$, when acting on $\cB^{p,q}$ is bounded by $\lambda^{-p}$.
To complete our alternative derivation of the results obtained by Fourier Transform we need to understand the discrete spectrum. 

\begin{lem}\label{lem:discrete} For each $p,q\in\bN$ we have $\sigma_{\cB^{p,q}}(\cL)\cap\{z\in\bC\;:\; |z|>\lambda^{-p}\}=\{1\}$.
\end{lem}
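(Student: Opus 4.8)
\emph{Proof proposal.} The plan is to pin down $1$ as a genuine eigenvalue, to recall that $\{z:|z|>\lambda^{-p}\}$ meets $\sigma_{\cB^{p,q}}(\cL)$ only in eigenvalues (the essential spectral radius of $\cL$ on $\cB^{p,q}$ being at most $\lambda^{-p}$ by the Lasota--Yorke inequality, Lemma \ref{lem:compact} and Theorem \ref{thm:hennion}), and then to rule out every other such eigenvalue by passing to Fourier coefficients as in \eqref{eq:f-dyn}. That $1\in\sigma_{\cB^{p,q}}(\cL)$ is immediate: the constant function $\mathbf 1$ lies in $\cC^\infty\subset\cB^{p,q}$, it is nonzero there (its image under the embedding $i$ of Lemma \ref{lem:equivalence} is Lebesgue measure), and $\cL\mathbf 1=\mathbf 1$. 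Conversely, suppose $z\in\sigma_{\cB^{p,q}}(\cL)$ with $|z|>\lambda^{-p}$ and pick $h\in\cB^{p,q}\setminus\{0\}$ with $\cL h=zh$. Regarding $h$ as a distribution via Lemma \ref{lem:equivalence}, its Fourier coefficients $\hat h_k$ (namely those of $i(h)$) are well defined and, by \eqref{eq:f-dyn} (which extends from $\cC^\infty$ to $\cC^q(\bT^2,\bC)'$ by density and continuity), satisfy $\hat h_{A^nk}=z^n\,\hat h_k$ for all $k\in\bZ^2$ and $n\in\bN$.

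The analytic heart of the argument is an estimate reflecting the anisotropy of $\|\cdot\|_{p,q}$: there is $C>0$ with
\[
|\hat h_k|\leq C\,\|h\|_{p,q}\,\frac{(1+|\langle k,v^s\rangle|)^q}{(1+|\langle k,v^u\rangle|)^p}\qquad\text{for all }h\in\cB^{p,q},\ k\in\bZ^2 .
\]
I would prove it first for $h\in\cC^\infty$ by the device already used in Lemma \ref{lem:equivalence}: decompose $\bT^2$ with a partition of unity into boxes adapted to the frame $(v^u,v^s)$, integrate by parts $p$ times in the $v^u$-direction so that the oscillatory phase $e^{-2\pi i\langle k,v^u\rangle(\cdot)}$ produces the factor $|\langle k,v^u\rangle|^{-p}$ while the $v^u$-derivatives fall on $h$ (or, for a few terms, on the fixed cutoffs) and are absorbed into $\|h\|_{p,q}$, and bound the remaining integral along $v^s$ by the order-$q$ estimate built into $\|\cdot\|_{0,q}$, the $v^s$-phase producing the factor $(1+|\langle k,v^s\rangle|)^q$; combining with the trivial bound, obtained without any integration by parts, replaces $|\langle k,v^u\rangle|^{-p}$ by $(1+|\langle k,v^u\rangle|)^{-p}$. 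The estimate then extends to all of $\cB^{p,q}$ by density, both sides being continuous in $h$ (the left side through Lemma \ref{lem:equivalence}).

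To conclude, fix $k\neq 0$. Since $v^u$ and $v^s$ have irrational slope, $\langle k,v^u\rangle\neq 0$; writing $k=a\,v^u+b\,v^s$ with $a\neq 0$ and using $Av^u=\lambda v^u$, $Av^s=\lambda^{-1}v^s$ one gets $\langle A^nk,v^u\rangle=a\lambda^n$ and $\langle A^nk,v^s\rangle=b\lambda^{-n}$, so the displayed bound gives $|\hat h_{A^nk}|\leq C_k\,\lambda^{-np}$ for all $n\geq 0$ and a suitable $C_k$. Since also $|\hat h_{A^nk}|=|z|^n|\hat h_k|$, this forces $|\hat h_k|\,(|z|\lambda^p)^n\leq C_k$, and as $|z|\lambda^p>1$ letting $n\to\infty$ yields $\hat h_k=0$. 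Hence $i(h)=\hat h_0\,\Leb$, so $h=\hat h_0\,\mathbf 1$ by injectivity of $i$; since $h\neq 0$ we have $\hat h_0\neq 0$, and then $\cL h=zh$ together with $\cL\mathbf 1=\mathbf 1$ gives $z=1$. Thus $\sigma_{\cB^{p,q}}(\cL)\cap\{z:|z|>\lambda^{-p}\}=\{1\}$.

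I expect the real work to be the anisotropic Fourier estimate: one must run the partition-of-unity and integration-by-parts bookkeeping with care — keeping the auxiliary one-dimensional test functions in the required $\cC^q$ classes and combining the small-divisor factor $|\langle k,v^u\rangle|^{-p}$ with the $\|\cdot\|_{0,q}$ bound correctly — and verify that it survives the passage from $\cC^\infty$ to $\cB^{p,q}$. Everything afterwards is an immediate consequence of the action $k\mapsto Ak$ of the dynamics on frequencies.
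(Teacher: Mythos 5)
Your proof is correct, but it takes a genuinely different route from the paper's. The paper exploits the commutation relation $\partial_u\cL=\lambda^{-1}\cL\partial_u$ (valid because $A^{-1}$ is symmetric and $A^{-1}v^u=\lambda^{-1}v^u$): if $\cL h=\mu h$ then $\partial_u h\in\cB^{p-1,q}$, by Lemma~\ref{lem:embed}, is an eigenvector with eigenvalue $\lambda\mu$; iterating $p$ times produces an eigenvector $\partial_u^p h\in\cB^{0,q}$ with eigenvalue $\lambda^p\mu$ of modulus $>1$, contradicting the spectral radius bound unless $\partial_u^p h=0$, whence by Lemma~\ref{lem:embed} (kernel of $\partial_u$ is the constants, and the constant must be zero by integrating) one descends to $h=\mathrm{const}$ and so $\mu=1$. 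You instead establish an anisotropic Fourier decay estimate $|\hat h_k|\leq C\|h\|_{p,q}(1+|\langle k,v^s\rangle|)^q(1+|\langle k,v^u\rangle|)^{-p}$ for $h\in\cB^{p,q}$, push the eigenvalue equation through \eqref{eq:f-dyn} to get $\hat h_{A^nk}=z^n\hat h_k$, and let the backward expansion along $v^u$ kill $\hat h_k$ for $k\neq 0$ (the irrationality of the slopes ensures $\langle k,v^u\rangle\neq 0$). Both arguments are sound. The paper's approach is shorter and stays entirely inside the Banach space machinery (using only the $\partial_u$--commutation and the already-established Lemma~\ref{lem:embed}), and it is the one that survives, at least in spirit, in the nonlinear case; your approach requires the extra technical lemma (which you only sketch, though the partition-of-unity plus $p$-fold integration-by-parts in the $v^u$-variable, with the $e^{2\pi it\langle k,v^s\rangle}$ phase absorbed into the order-$q$ test-function normalization, is a correct outline), but the estimate you prove is of independent interest: it makes visible how the geometric norms of Section~\ref{sec:geom-toral} compare with the anisotropic Sobolev-type norms of Section~\ref{sec:sob-toral}. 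One small formal remark: the statement as written only makes sense for $p\geq 1$ (for $p=0$ the intersection is empty, not $\{1\}$); both your argument and the paper's implicitly assume $p\geq 1$, and you should say so.
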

\begin{proof}
Suppose that $\cL h=\mu h$, $|\mu| >\lambda^{-p}$. Then
\[
\mu\partial_u h=\partial_u\cL h=\lambda^{-1}\cL\partial_u h.
\]
Thus $\partial_u h\in\cB^{p-1,q}$ is an eigenvector of $\cL$ with eigenvalue $\lambda \mu$. Doing it $p$ time we have that $\partial_u^p h\in\cB^{0,q}$ is an eigenvector with eigenvalue $\lambda^p\mu$, but $|\lambda^p\mu|>1$ while the spectral radius of $\cL$ is bounded by one, hence it must be $\partial_u^p h=0$.
But then Lemma \ref{lem:embed}  implies that $\partial_u^{p-1} h$ is constant. Integrating we see that the constant is zero. Iterating this argument $p$ times we have $h=\textrm{const}$, but then $\mu=1$.
\end{proof}

%%%%%%%%%%%%%
\section{Uniformly hyperbolic maps and Banach spaces}\label{sec:hyp}
In this section we build on what we have learned in the previous sections to treat the general non-linear case in which expanding and contracting directions are both present simultaneously but there is no neutral direction. 

The goal is to develop Banach spaces  on which the  transfer operator has nice properties. This can be done in various way \cite{BKL, GL1, GL2, BT1, BT2, FS}, here we will describe the so called {\em geometrical approach} which generalises the construction detailed in Section \ref{sec:geom-toral}. Alternative approaches are the {\em Sobolev space approach} and the (similar) {\em semiclassical approach}, which generalise the norms detailed in Section \ref{sec:sob-toral}. The description below is intend as an introduction, see \cite{GL1, GL2} for more details and \cite{babook2} for a much more in depth discussion of all the different functional spaces.

In the geometrical approach one would like to divide the stable and unstable direction in such a way that one can integrate along the stable direction, similarly to what we have done in Section \ref{sec:geom-toral}. The simplest possible generalisation would be to integrate on pieces of stable manifold (as in Section \ref{sec:geom-toral}). This is possible (it was indeed the case in the first successful attempts to construct such spaces \cite{BKL}) but it has the draw back that the Banach space depends badly  on the map. Such a feature is very inconvenient if one wants to study an open set of maps, a necessity when investigating  the dependence of the SRB measure from some parameter or in the study of random maps. The construction described  in the following avoids such a problem, at the price of some extra work.

\subsection{Anosov maps}\ \newline
Let us define more precisely the class of maps we want to study: $\cC^r$ Anosov maps, $r\geq 2$.
A diffeomorphism $f\in {\operatorname{Diff}}^r(M,M)$,\footnote{ In fact endomorphisms can be treated in the same way, but let us keep things simple.} where $M$ is a $d$-dimensional compact Riemannian manifold, is called an Anosov map if there exist two uniformly transversal close continuous cones fields $C^u(x), C^s(x)\subset T_xM$ and  $\lambda>1$ such that $d_xf C^u(x)\subset \operatorname{int}C^u(f(x))\cup\{0\}$, $d_xf^{-1} C^s(x)\subset \operatorname{int}C^s(f^{-1}(x))\cup\{0\}$ and
\begin{equation}\label{eq:hyper}
\begin{split}
&\|d_xf v\|> \lambda\|v\|\quad\forall\; v\in C^u(x)\\
&\|d_xf^{-1} v\|> \lambda\|v\|\quad\forall\; v\in C^s(x).
\end{split}
\end{equation}

Note that in higher dimensions cones can have a variety of shapes.\footnote{ A cone is a subset $C$ of a real vector space such that if $v\in C$, then $\lambda v\in C$ for each $\lambda\in\bR$.} We ask that for each $v\in C^u(x)$ there exists a $d^u$ dimensional subspace $E$ of $T_xM$ such that $v\in E\subset C^u(x)$, and for each $v\in C^s(x)$ there exists a $d^s$ dimensional subspace $E$ of $T_xM$ such that $v\in E\subset C^s(x)$.\footnote{ The sophisticated reader will recognise that it might be more elegant to defined the cones as subsets of the Grassmannian.}

It is well known that the above cone invariant and contracting properties are equivalent to the existence of two invariant distributions \cite{KH}. More precisely: at each point $x\in M$ there exists two transversal subspaces $E^s(x)\subset C^s(x)$ and $E^u(x)\subset C^u(x)$ such that $Df E^{u/s}(x)=E^{u/s}(f(x))$ and, in addition, $E^{u/s}(x)$ vary in an H\"older continuous way with respect to $x$.

It is possible to choose an atlas $\{U_i\}_{i=1}^N$ so that for each $U_i$ there exists a special point $x_i\in U_i$, call it the {\em centroid}, such that $D_{x_i}\phi_i E^s(x_i)=\{(\xi,0)\;:\;\xi\in\bR^{d_s}\}$ and $D_{x_i}\phi_i E^u(x_i)=\{(0,\eta)\;:\;\eta\in\bR^{d_u}\}$. Also, without loss of generality, we can assume that $\phi_i(x_i)=0$ and $\phi_i(U_i)=B_{d_s}(0,r_i)\times B_{d_u}(0,r_i)$ where, for all $d'\in\bN$ and $z\in\bR^{d'}$, $B_{d'}(z,r)=\{x\in\bR^{d'}\;:\; \|z-x\|< r\}$.
Clearly, there exists $\delta>0$ such that $M=\cup_i\phi_i^{-1}(B_{d_s}(0,r_i-2\delta)\times B_{d_u}(0,r_i-2\delta))=:\cup_i\widehat U_i$. In other words, a small shrinking  $\{(\widehat U_i, \phi_i)\}_{i=1}^N$ of the charts still forms an atlas. Finally, we can always arrange so that \eqref{eq:hyper} holds with respect to the euclidean norm in the charts for vectors in $\{(0,\eta)\;:\;\eta\in\bR^{d_u}\}$ and $\{(\xi, 0)\;:\;\eta\in\bR^{d_s }\}$, respectively.\footnote{ For example one can use the exponential map at $x_i$ composed with a linear coordinate change  to define the chart.}

By the continuity of the distributions and the contraction of the cones it follows that, provided the $r_i$ are chosen small enough,  the constant cones $C^s_*=\{(\xi,\eta)\in \bR^d\;:\; \|\eta\|\leq \|\xi\|\}$ and $C^u_*=\{(\xi,\eta)\in \bR^d\;:\; \|\xi\|\leq \|\eta\|\}$, are invariant. That is, when the composition makes sense,
\begin{equation}\label{eq:cone-hyp}
\begin{split}
& D\phi_jDfD\phi_i^{-1}C^u_*\subset \operatorname{int}C^u_*\cap\{0\}\\
&D\phi_jDf^{-1}D\phi_i^{-1}C^s_*\subset \operatorname{int}C^s_*\cap\{0\}. 
\end{split}
\end{equation}
\begin{rem} Maps for which there exists cones $C^{u/s}_*$ that satisfy \eqref{eq:cone-hyp} and  the equivalent of   \eqref{eq:hyper}, with respect to the Euclidean norm in the charts, are called {\em cone hyperbolic}. Note that if the map is smooth we just argued that cone hyperbolic is equivalent to Anosov. Yet, the notion of cone hyperbolicity applies more generally, for example to piecewise smooth maps \cite{BG10}.
\end{rem}

\begin{rem}\label{rem:opensets} Note that if $f$ is cone hyperbolic, then there exists a neighbourhood $\cU\subset \cC^1$ such that each $\tilde f\in\cU$ is cone hyperbolic with respect to the same cones.\footnote{ It follows from a standard compactness argument.}
\end{rem}
\subsubsection{Transfer Operator}\ \\
Let us compute the Transfer operator. A change of variable yields\footnote{ Unless differently stated the integrals are always meant with respect to the volume form associated to the metric.}
\[
\int_M h\cdot \vf\circ f=\int_M h\circ f^{-1} |\det Df|^{-1}\circ f^{-1} \vf.
\]
It is then natural to define, for each $h\in\cC^0$, the transfer operator
\begin{equation}\label{eq:TO-gen}
\cL h(x)=(h |\det Df|^{-1})\circ f^{-1}(x).
\end{equation}
The reader can easily check that 
\[
\cL^n h=(h |\det Df^n|^{-1})\circ f^{-n}.
\]
Since
\[
\int_M |\cL h|=\int_M \cL |h| \cdot 1=\int_M |h|\cdot 1\circ f=\int_M |h|,
\]
$\cL$ is a contraction in the $L^1$ norm, hence we would like to define, as in the previous section, a norm for which the spectral radius is one and the essential spectral radius is strictly smaller. In other words, we would like a Banach space on which $\cL$ has spectral radius one and it is quasi-compact.
%%%%%%%%%%%%%%%%%%%%%%%%%%%%%
\subsection{A set of almost stable manifolds}
By the general theory of hyperbolic systems, \cite{KH}, it follows also a less local statement: there exists two invariant 
foliations, the stable and unstable foliations. More precisely, at each point $x\in M$ there exists two local $\cC^r$-manifold $W^s(x)$, $W^u(x)$, of fixed size, such that $W^s(x)\cap W^u(x)=\{x\}$ and, for each  $y\in W^{s/u}(x)$, $E^{s/u}(y)$ is the tangent space to $W^{s/u}(x)$ at $y$.
The invariance means that $f W^u(x)\supset W^u(f(x))$ and $f W^s(x)\subset W^s(f(x))$.

Clearly the above foliations yield a natural candidate for the direction on which to integrate and indeed this was the original approach in \cite{BKL}. However, as already mentioned, such a choice has at least two drawbacks: first, although the manifolds are as regular as the map, the foliation is, in general, only H\"older \cite{KH}. Second, if one would like to have a Banach space in which to analyse not just one map but an open set of maps, then it is necessary to integrate on manifolds that are fairly independent from the map. Both problems have been solved in \cite{GL1}, the idea being to introduce an ``invariant" set of manifolds rather than an invariant distribution (in some sense, the equivalent of an invariant cone, see Remark \ref{rem:opensets}). 

To make precise the above idea it is more transparent to work in charts. Let, $\delta>0$ be small enough and define
\[
\begin{split}
\Sigma^r_i=\bigg\{G\in\cC^r(\bR^{d_s},\bR^{d_u})\;:\;&\|G\|_{\cC^0}\leq  r_i;\;\|DG\|^*_r\leq 1\bigg\},
\end{split}
\]
where $\|\cdot\|^*_r$ is equivalent to the $\|\cdot\|_{\cC^{r-1}}$ norm and will be defined in Lemma \ref{eq:manifolds}.

Given $G\in \Sigma^r_i$ we have $(y,G(y))\in B_{d_s}(0,r_i)\times B_{d_u}(0,r_i)$ for all $y\in  B_{d_s}(0,r_i)$, thus the manifolds 
\begin{equation}\label{eq:manifold}
W_{i,z,G}=\{\phi_i^{-1}(y,G(y))\}_{y\in B_{d_s}(z,\delta)}\;;\quad \widetilde W_{i,z,G}=\{\phi_i^{-1}(y,G(y))\}_{y\in B_{d_s}(z,2\delta)}
\end{equation}
are well defined $d_s$ dimensional $\cC^r$ sub-manifold of $M$ for any $i\in\{1,\dots,N\}$,  $z\in  B_{d_s}(0,r_i-2\delta)$ and $G\in\Sigma^r_i$.
We finally define the announced set of manifolds:
\begin{equation}\label{eq:manifolds}
\Sigma^r=\bigcup_{i=1}^N\; \bigcup_{z\in B_{d_s}(0,r_i-2\delta)} \;\bigcup_{G\in\Sigma^r_i} W_{i, z,G}.
\end{equation}
Given $W=W_{i,z,G}\in\Sigma^r$ we will call $\widetilde W=\widetilde W_{i,z,G}$ its {\em enlargement}.

The above set of manifolds will play the role of the invariant foliation (but it is much more flexible) as is illustrated by the next Lemma.
\begin{lem}\label{lem:invariance}
For each Anosov map $f\in\operatorname{Diff}^r(M)$ there exist norms $\|\cdot\|_{\cC^r}$ and $\|\cdot\|^*_r$, constants $\delta >0$ and $\bar n \in\bN$ such that for all $W\in \Sigma^r$ and $n\geq \bar n$ there exist $m\in\bN$ and a collection $\{W_i\}_{i=1}^m\subset \Sigma^r$ such that,\footnote{ With a bit more work one can prove it for each $\bar n\in\bN$, but let us keep it simple.}
\[
\overline{f^{-n}W}\subset \bigcup_{i=1}^m W_i \subset f^{-n}(\widetilde W).
\]
Moreover, there exists a constant $C_\delta>0$, depending only  on $\delta$, and a partition of unity of $f^{-n}\widetilde W$, subordinated to $\{W_i\}_{i=1}^m\cup\{f^{-n}\widetilde W\setminus \overline{f^{-n}W}\}$, with $\cC^r$ norm bounded by $C_\delta$. That is, a set $\{\pf_i\}_{i=1}^m$ of functions, from $f^{-n}\widetilde W$ to  $[0,1]$, such that $\supp \pf_i\subset W_i$, $\sup_i\|\pf_i\|_{\cC^{r}(W_i,\bR)}\leq C_\delta$, and $\sum_{i=1}^m\pf(x)=1$ for each $x\in \overline{f^{-n}W}$.
\end{lem}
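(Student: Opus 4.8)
\emph{Strategy and choice of the norms.} The plan is to produce $f^{-n}W$ piece by piece from the (inverse) graph transform: one uses the cone condition \eqref{eq:cone-hyp} to keep the pieces tangent to the constant stable cone $C^s_*$, and the hyperbolicity \eqref{eq:hyper} to keep their $\cC^r$ jets under control, so that the pieces land back in $\Sigma^r$; the covering and the partition of unity are then soft consequences of the fact that all the pieces have the fixed size $\delta$ and uniformly bounded geometry. First I would fix the norms. Put $\|u\|^*_r=\sum_{k=1}^{r-1}\eta^{\,k-1}\|D^ku\|_{\cC^0}$ and let $\|\cdot\|_{\cC^r}$ be the analogously $\eta$-weighted $\cC^r$ norm on functions, where $\eta\in(0,1]$ is a small parameter fixed at the very end; for every fixed $\eta$ these are equivalent to $\|\cdot\|_{\cC^{r-1}}$, resp.\ $\|\cdot\|_{\cC^r}$, so the definitions of $\Sigma^r_i$ and $\Sigma^r$ are unaffected, and the small $\eta$ (together with a small $\delta$) serves only to absorb the additive constants produced below.

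\emph{Inverse graph transform.} Fix $W=W_{i,z,G}\in\Sigma^r$ and $x\in\overline{f^{-n}W}$; choose $j$ with $x\in\widehat U_j$, work in the chart $\phi_j$, and let $p\in B_{d_s}(0,r_j-2\delta)$ be the stable coordinate of $\phi_j(x)$. Read in this chart, the $f^{-n}$-image of the enlargement $\widetilde W$ has all its tangent spaces in $C^s_*$ by iterating \eqref{eq:cone-hyp}, and compactness of the sphere bundle of $C^s_*$ upgrades the strict inclusion to a slope bound $\|D\Gamma\|_{\cC^0}\le\mu<1$; hence near $\phi_j(x)$ this image is a graph $\{(y,\Gamma(y))\}$ with $\Gamma\in\cC^r$, defined on at least $B_{d_s}(p,2\delta)$ because $f^{-n}$ expands along the stable cone. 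Since $f^{-n}$ also contracts distances along $C^s_*$ and $\|D\Gamma\|_{\cC^0}\le\mu$, taking $\delta$ small gives $\|\Gamma\|_{\cC^0}\le r_j$, so $W_{j,p,\Gamma}$ is a candidate element of $\Sigma^r$ with $f^n(W_{j,p,\Gamma})\subset\widetilde W$, i.e.\ $W_{j,p,\Gamma}\subset f^{-n}\widetilde W$. The one genuine estimate is $\|D\Gamma\|^*_r\le1$: differentiating the graph-transform identity and iterating $n$ times, the top-order term $D^{r-1}(D\Gamma)$ acquires a factor bounded by $\Const\,\lambda^{-rn}$ from \eqref{eq:hyper} (the stable direction is expanded, the unstable contracted, by $f^{-1}$), while the remaining Fa\`a-di-Bruno contributions are universal polynomials in the strictly lower jets of $G$ and $\Gamma$ and in the bounded derivatives of $f^{\pm n}$ in charts; as the norm is $\eta$-weighted, each genuinely nonlinear correction carries a positive power of $\eta$, so one gets $\|D\Gamma\|^*_r\le\Const\,\lambda^{-2n}\|DG\|^*_r+o_\eta(1)$. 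Choosing $\bar n$ with $\Const\,\lambda^{-2\bar n}\le\tfrac12$ and then $\eta$ so small that the $o_\eta(1)$ is $\le\tfrac12$, one obtains $\|D\Gamma\|^*_r\le1$, i.e.\ $W_{j,p,\Gamma}\in\Sigma^r$. All the resulting constants ($\delta,\bar n,\eta$ and the two norms) depend only on the cones and on an upper bound for $\|f\|_{\cC^r}$; in particular, by Remark \ref{rem:opensets}, they are uniform on any $\cC^1$-neighbourhood of $f$ of bounded $\cC^r$ size.

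\emph{Covering and partition of unity.} The set $\overline{f^{-n}W}$ is a compact connected $\cC^r$ submanifold-with-boundary of dimension $d_s$, and the previous step exhibits, around each of its points, a neighbourhood inside $f^{-n}\widetilde W$ that equals an element of $\Sigma^r$ of the above type. Covering $\overline{f^{-n}W}$ by submanifold balls of radius $\delta/2$ centred on $\overline{f^{-n}W}$, a Vitali argument (legitimate since the candidate pieces have uniformly bounded $\cC^r$ data, hence bounded curvature and injectivity radius) extracts a finite subfamily of controlled, purely dimensional, multiplicity whose $\delta$-sized pieces $W_1,\dots,W_m\in\Sigma^r$ still cover $\overline{f^{-n}W}$ while all lying in $f^{-n}\widetilde W$; this is the claimed $\overline{f^{-n}W}\subset\bigcup_{i=1}^mW_i\subset f^{-n}\widetilde W$ (the integer $m$ will in general grow with $n$, which the statement allows). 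For the partition of unity, pull back to each $W_i$ a fixed bump function of its stable chart coordinate that is $1$ on the $\delta$-ball and $0$ off the $2\delta$-ball of $\widetilde W_i$, add one further bump supported in $f^{-n}\widetilde W\setminus\overline{f^{-n}W}$, and normalise; because every $W_i$ carries the uniform $\cC^r$ data defining $\Sigma^r_i$ and the cover has bounded multiplicity, the resulting $\{\pf_i\}_{i=1}^m$ satisfy $\supp\pf_i\subset W_i$, $\sum_{i=1}^m\pf_i\equiv1$ on $\overline{f^{-n}W}$, and $\sup_i\|\pf_i\|_{\cC^r(W_i,\bR)}\le C_\delta$ with $C_\delta$ depending only on $\delta$.

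\emph{Main obstacle.} The crux is the graph-transform estimate of the second paragraph: the derivatives of $f$ feeding the Fa\`a-di-Bruno terms are merely bounded, not small, so the hyperbolic contraction of \eqref{eq:hyper} beats them only after $\bar n$ iterates, and cleanly only for the top-order derivative — controlling the intermediate orders is exactly what forces the weighted norms (small $\eta$) and the smallness of $\delta$, and arranging $\delta,\bar n,\eta$ and the norms to fit together simultaneously is where the real work lies. Everything else (cone invariance $\Rightarrow$ graph, Vitali $\Rightarrow$ bounded-multiplicity cover, pulled-back bumps $\Rightarrow$ uniformly $\cC^r$-bounded partition of unity) is routine.
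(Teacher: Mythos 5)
Your overall strategy --- inverse graph transform in charts, cone invariance to get the graph structure, weighted $\cC^r$-type norms to tame the lower-order derivatives, compactness plus bounded geometry for the cover and partition of unity --- is the one the paper uses, and your \emph{main obstacle} paragraph identifies the right pressure point. The gap is in the contraction estimate itself. You assert that the non-top-order Fa\`a-di-Bruno contributions involve ``the bounded derivatives of $f^{\pm n}$ in charts'' and conclude $\|D\Gamma\|^*_r\le \Const\,\lambda^{-2n}\|DG\|^*_r+o_\eta(1)$; but the chart derivatives of $f^{\pm n}$ are \emph{not} bounded uniformly in $n$ (higher derivatives of $f^n$ grow, essentially exponentially, by the chain rule), so the constants buried in those polynomials and in ``$o_\eta(1)$'' depend on $n$, and a single choice of $\eta$ made after fixing $\bar n$ cannot kill them for all $n\ge\bar n$. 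As it stands this proves the inclusion $W_{j,p,\Gamma}\in\Sigma^r$ only for $n=\bar n$, not for all $n\ge\bar n$.

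What actually works --- and is what the paper does --- is to establish a clean, $n$-free contraction for a \emph{single} block $f^{-\bar n}$ and iterate. The paper fixes $\bar n$ and $\gamma$ so that the one-block chart derivative $\Xi=D\phi_j\,Df^{-\bar n}\,D\phi_i^{-1}$ is $\gamma$-close to a block-diagonal matrix with hyperbolic diagonal, derives the explicit factorisation $DH=\left\{(C+D\,DG)(\Id+A^{-1}B\,DG)^{-1}A^{-1}\right\}\circ\alpha^{-1}$ (equation \eqref{eq:derivatives-ops}), and controls it in a submultiplicative operator norm $\|\cdot\|^*_r$ (Sub-Lemmas \ref{sublem:star} and \ref{sublem:alleg}), obtaining $\|DH\|^*_r\le 2/3$ for any $\|DG\|^*_r\le 1$; the one-block bound is uniform, so iteration over blocks of length $\bar n$ (re-covering into $\delta$-pieces at each block) handles all $n\ge\bar n$. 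To repair your sketch you should (a) restrict the estimate explicitly to $n=\bar n$, where the chart derivatives genuinely are bounded, and state the iteration; and (b) note that your $\|\cdot\|^*_r$, being a direct $\eta$-weighted sum of derivative sup-norms, is not obviously submultiplicative under the matrix products that appear in $(\Id+A^{-1}B\,DG)^{-1}$ and under composition with $\alpha^{-1}$, whereas the paper's operator-norm definition gives $\|\Xi_1\Xi_2\|^*_r\le\|\Xi_1\|^*_r\|\Xi_2\|^*_r$ for free and this is used repeatedly. Your covering and partition-of-unity discussion is fine; the bounded-multiplicity Vitali step you spell out is indeed the reason the normalised bumps have $\cC^r$ norm $\le C_\delta$, a point the paper leaves to a citation of H\"ormander.
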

\begin{proof}
Since we will need to control high derivatives it is convenient to use the fact that, for each finite dimensional Banach algebra $\bA$, $\cC^k(\bR^d, \bA)$ is a Banach Algebra as well, provided we choose the right weighted norm. For example 
\begin{equation}\label{eq:b-norms}
\begin{split}
&\|g\|_{\cC^0}=\sup_{x\in\bR^d}\|g(x)\|\\
&\|g\|_{\cC^{k+1}}=\sup_i\|\partial_{x_i} g\|_{\cC^k}+a\| g\|_{\cC^k}
\end{split}
\end{equation}
for $a\geq 2$ will do. Note that this implies\footnote{ Here I use the usual PDE notation in which $\alpha=(i_1,\cdots, i_k)$ is a multiindex, $|\alpha|=k$, and $\partial^\alpha=\partial_{x_{i_1}}\dots\partial_{x_{i_k}}$.} 
\begin{equation}\label{eq:r-norm}
\|g\|_{\cC^{k}}=\sum_{j=0}^k {k\choose j} a^{k-j}\sup_{|\alpha|=j}\|\partial^\alpha g\|_\infty.
\end{equation}
From now on we use such a norm with an $a$ that will be chosen shortly.

Let $W\in \Sigma^r$ and $n\in\bN$ large enough, then $f^{-n} W$ will be  a larger manifold and the distance between the boundaries $\partial f^{-n} W$ and $\partial f^{-n} \widetilde W$ will be (in charts) larger than $2\delta$ due to the backward expansion in the stable cone. First of all note that, for each point $x\in \overline{f^{-n} W}$ there exists $j_x\in\{1,\dots N\}$, $z_x\in B_{d_s}(0,r_{j_x}-2\delta)$ and $G_x\in\cC^r(\bR^{d_s},\bR^{d_u})$, with $x=\phi_{j_x}^{-1}(z_x, G_x(z_x))$ and  $\|G_x\|_\infty\leq r_{j_x}-2\delta$, such that $\widetilde W_{j_x,z_x, G_x}\subset  f^{-n} \widetilde W$. Then $\{W_{j_{x_k},z_{x_k}, G_{x_k}}\}$ covers the closure of a $\delta$ neighbourhood of $f^{-n}W$ in $f^{-n}\widetilde W$.  Accordingly, we can extract a finite covering $\{W_{k}\}_{i=1}^m:=\{W_{j_{x_k},z_{x_k}, G_{x_k}}\}$ of $\overline{f^{-n}W}$ by compactness. The existence of a partition of unity with the wanted properties and subordinated to the covering  is a standards fact, see \cite[Theorem 1.4.10]{Hor}.

To conclude it remains to show that $G_{x_k}\in \Sigma^r_{j_{x_k}}$. Note that, by hypotheses,
\[
D_{f(x)}\phi_jD_xf^{-\bar n}D_{\phi_i(x)}\phi_i^{-1}=\begin{pmatrix} A^{i,j}(x)&B^{i,j}(x)\\C^{i,j}(x)&D^{i,j}(x)\end{pmatrix}
=:\Xi^{i,j}(x)
\]
where, by construction, if $f^{\bar n}(x_i)=x_j$, then 
\begin{equation}\label{eq:model-mat}
\Xi^{i,j}(x_i)=\begin{pmatrix} A^{i,j}_*&0\\0&D^{i,j}_*\end{pmatrix}
\end{equation}
with $\|(A^{i,j}_*)^{-1}\|\leq \lambda^{-\bar n}$ and $\|D^{i,j}_*\|\leq \lambda^{-\bar n}$. Thus, by continuity, for each $\gamma>0$ we can write 
\[
\Xi^{i,j}=\Xi^{i,j}_*+\Delta^{i,j},
\]
where $\Xi^{i,j}_*$ is a constant matrix with the same properties of $\Xi^{i,j}(x_i)$ in \eqref{eq:model-mat} and 
\begin{equation}\label{eq:Delta-n}
\|\Delta^{i,j}\|_\infty\leq \gamma,
\end{equation}
provided the $r_i\geq 2 \delta$ have been chosen small enough.

If $W_{j,\zeta,H}\subset f^{-\bar n}W_{i,z,G}$, then setting $F(x)=\phi_j\circ f^{-\bar n}\circ \phi_i^{-1}$ we have that there exists 
$\alpha\in\cC^r(D,B_{d_s}(0,r_j))$, $D \subset B_{d_s}(0,r_i)$, such that
\begin{equation}\label{eq:contractions}
F(x, G(x))=(\alpha(x), H(\alpha(x))).
\end{equation}
Hence, for each $\xi\in\bR^{d_s}$,
\[
(D\alpha \xi, DH\circ \alpha D\alpha\xi )=\Xi^{i,j}(\xi,DG\xi)=\begin{pmatrix} A^{i,j}&B^{i,j}\\C^{i,j}&D^{i,j} 
\end{pmatrix}\begin{pmatrix} \xi, DG\xi\end{pmatrix}
\]
which implies
\begin{equation}\label{eq:derivatives-ops}
\begin{split}
D\alpha&= A^{i,j}+B^{i,j}DG\\ 
DH&=\left\{(C^{i,j}+D^{i,j}DG)(A^{i,j}+B^{i,j}DG)^{-1}\right\}\circ \alpha^{-1}\\
&=\left\{(C^{i,j}+D^{i,j}DG)(\Id+(A^{i,j})^{-1}B^{i,j}DG)^{-1}(A^{i,j})^{-1}\right\}\circ \alpha^{-1}.
\end{split}
\end{equation}
To estimate the higher order derivatives it is convenient to consider $\Xi^{i,j}$ (and its block constituents) as an operator mapping a vector filed in the chart $i$ to a vector field in the chart $j$. The norm of such an operator is naturally defined to be\footnote{ Note that, by definition, $\|A B\|^*_r\leq\|A\|^*_r\,\|B\|^*_r$.}
\[
\|\Xi\|^*_r=\sup_{\|v\|_{\cC^r}\leq 1}\|\Xi v\|_{\cC^r}.
\]
To estimate such a norm it is helpful the following results.
\begin{sublem}\label{sublem:star}
For each $r\in\bN$ and $\Xi\in\cC^r(\bR^d,GL(\bR^d,\bR^d))$
\begin{equation}\label{eq:matrix-n}
\sup_{|\alpha|\leq r}a^{-|\alpha|}\|\partial^\alpha\Xi\|_\infty\leq \|\Xi\|^*_r\leq e^r (r!)^2 \sup_{|\alpha|\leq r}a^{-|\alpha|}\|\partial^\alpha\Xi\|_\infty.
\end{equation}
\end{sublem}
\begin{proof}
Remembering  \eqref{eq:r-norm} we have
\[
\begin{split}
\|\Xi v\|_{\cC^r}&=\sum_{k=0}^r {{r}\choose{k}} a^{r-k}\sup_{|\alpha|=k}\|\partial^\alpha(\Xi v)\|_\infty
\leq \sum_{k=0}^r {{r}\choose{k}} a^{r-k}\sum_{|\alpha|+|\beta|=k}{k\choose |\beta|}\|\partial^\alpha\Xi\|_\infty\|\partial^\beta v\|_\infty\\
&\leq \sum_{|\beta|=0}^r \sum_{k=|\beta|}^{r}{r\choose |\beta|}\frac{ a^{r-k} r^{k-|\beta|}}{(k-|\beta|)!}\|\partial^{k-|\beta|}\Xi\|_\infty\|\partial^\beta v\|_\infty\\
&\leq \sum_{|\alpha|=0}^{r}\frac{ a^{-|\alpha|} r^{|\alpha|} r!}{|\alpha|!}\|\partial^{\alpha}\Xi\|_\infty\| v\|_{\cC^r}\\
&\leq e^r (r!)^2\sup_{|\alpha|\leq r}a^{-|\alpha|}\|\partial^\alpha\Xi\|_\infty\|v\|_{\cC^r}.
\end{split}
\]
That is
\[
\|\Xi\|^*_r\leq e^r (r!)^2 \sup_{|\alpha|\leq r}a^{-|\alpha|}\|\partial^\alpha\Xi\|_\infty.
\]
On the other hand, if we restrict to $v$ that are constant vector fields with $\|v\|=1$ we have, for each $|\alpha|\leq r$,
\[
\|\Xi\|^*_r\geq a^{-|\alpha|}{r\choose |\alpha|}\sup_{\|v\|=1}\|(\partial^\alpha\Xi) v)\|_\infty\geq a^{-|\alpha|}\|\partial^\alpha \Xi\|_\infty.
\]
\end{proof}
From Sub-Lemma \ref{sublem:star} and equation \eqref{eq:Delta-n} it follows that, by choosing $a$ large enough (depending on $\gamma$ and $\bar n$),
\[
\|\Delta^{i,j}\|^*_r\leq  C_r\gamma.
\]
Accordingly, for each constant $C_{r,d}>1$, choosing $\gamma$ small enough and $\bar n$ large enough, we obtain
\begin{equation}\label{eq:normbound1}
\begin{split}
&\sup_{i,j}\|B^{i,j}\|^*_r+\|C^{i,j}\|^*_r\leq \frac{1}{2C_{r,d}}\\
&\sup_{i,j}\|(A^{i,j})^{-1}\|^*_r\leq  \frac{1}{2C_{r,d}}\\
&\sup_{i,j}\|D^{i,j}\|^*_r\leq  \frac{1}{2C_{r,d}}.
\end{split}
\end{equation}

From the above and equation \eqref{eq:derivatives-ops} it follows
\begin{equation}\label{eq:alpha-der}
\begin{split}
\|(D\alpha)^{-1}\|^*_r&= \|(\Id+(A^{i,j})^{-1}B^{i,j}DG)^{-1} (A^{i,j})^{-1}\|^*_r\\
&\leq \frac 1{2 C_{r,d}}\sum_{k=0}^\infty (\|(A^{i,j})^{-1}B^{i,j}DG)^{-1}\|^*_r)^k\leq \frac 2{3 C_{r,d}}.
\end{split}
\end{equation}
Note that, by similar arguments, we can prove
\begin{equation}\label{eq:alpha-dert}
\|((D\alpha)^t)^{-1}\|^*_r\leq \frac 2{3 C_{r,d}},
\end{equation}
where $A^t$ is the transpose of the matrix $A$.

Unfortunately, to estimate \eqref{eq:derivatives-ops} we need to control the norm of $\Xi\circ \alpha^{-1}$ rather than simply the norm of $\Xi$.
To this end we need another technical Lemma.
\begin{sublem}\label{sublem:alleg}
For each $k\in\bN$ and $\cC^k$ function $g$, we have
\[
\|g\circ \alpha^{-1}\|_{\cC^k}\leq \|g\|_{\cC^k}.
\]
Moreover
\[
\|\Xi\circ\alpha^{-1}\|^r_*\leq C_r\|\Xi\|^r_*.
\]
\end{sublem}
\begin{proof}
By equations \eqref{eq:b-norms} the Lemma is true for $k=0$, moreover we can write
\[
\|g\circ \alpha^{-1}\|_{\cC^{k+1}}=\sup_i\|\partial_{x_i} (g\circ \alpha^{-1})\|_{\cC^k}+a\| g\circ \alpha^{-1}\|_{\cC^k} .
\]
We can thus argue by induction and, remembering \eqref{eq:alpha-dert}, conclude
\[
\begin{split}
\|g\circ \alpha^{-1}\|_{\cC^{k+1}}&\leq\sup_i\|\left[(\partial_{x_j} g) [(D\alpha)^{-1}]_{j,i}\right]\circ \alpha^{-1}\|_{\cC^k}+a\| g\|_{\cC^k}\\
&\leq \| (D\alpha)^t)^{-1}\nabla g\|_{\cC^k}+a\| g\|_{\cC^k}\\
&\leq \| ((D\alpha)^t)^{-1}\|^r_*\|\nabla g\|_{\cC^k}+a\| g\|_{\cC^k}\\
&\leq \frac{2d}{3C_{r,d}} \sup_j\|(\partial_{x_j} g)\|_{\cC^k}+a\| g\|_{\cC^k}\leq \|g\|_{\cC^{k+1}},
\end{split}
\]
provided we have chosen $C_{r,d}$ large enough.

To conclude, recalling \eqref{eq:b-norms}, \eqref{eq:r-norm} and Lemma \ref{sublem:star}
\[
\begin{split}
\|\Xi\circ\alpha^{-1}\|^*_r&\leq e^r (r!)^2 \sup_{|\alpha|\leq r}a^{-|\alpha|}\|\partial^\alpha(\Xi\circ \alpha^{-1})\|_\infty\leq e^r (r!)^2 \sup_{|\alpha|\leq r}a^{-|\alpha|}\|\Xi\circ \alpha^{-1}\|_{\cC^{|\alpha|}}\\
&\leq e^r (r!)^2 \sup_{|\alpha|\leq r}a^{-|\alpha|}\|\Xi\|_{\cC^{|\alpha|}}\leq e^r (r!)^2 \sup_{|\alpha|\leq r}\sum_{j=0}^{|\alpha|} {|\alpha|\choose j} 
\sup_{|\beta|=j}a^{-|\beta|}\|\partial^\beta \Xi\|_\infty\\
&\leq e^r 2^r(r!)^2\|\Xi\|^*_r.
\end{split}
\]

\end{proof}
Applying  Sub-Lemma \ref{sublem:alleg} to formula \eqref{eq:derivatives-ops} and recalling \eqref{eq:normbound1}, \eqref{eq:alpha-der} yields
\[
\begin{split}
\|DH\|^*_r&\leq \|\left\{(C^{i,j}+D^{i,j}DG)(\Id+(A^{i,j})^{-1}B^{i,j}DG)^{-1}(A^{i,j})^{-1}\right\}\circ \alpha^{-1}\|^*_r\\
&\leq C_r\|(C^{i,j}+D^{i,j}DG)(\Id+(A^{i,j})^{-1}B^{i,j}DG)^{-1}(A^{i,j})^{-1}\|^*_r\\
&\leq \frac {2C_r}{6 C_{r,d}^3}(1+\|DG\|^*_r)\leq \frac 23<1,
\end{split}
\] 
provided, again, we have chosen $C_{r,d}$ large enough. This concludes the Lemma.
\end{proof}
\begin{rem} Note that, given $f_0\in\cC^r$ and norms $\|\cdot\|_{\cC^r}, \|\cdot\|_r^*$ for which Lemma \ref{lem:invariance} holds, there exists a neighbourhood $\cU\subset \cC^r$ of  $f_0$ such that Lemma \ref{lem:invariance} holds, with the same norms, for each $f\in\cU$. This is the equivalent of Remark \ref{rem:opensets}.
\end{rem}

\subsection{High regularity norms}\label{sec:norms1}
If $W=W_{i,z,G}\in \Sigma_i^r$ and $\vf\in \cC^k_0(W,\bC)$, we define
\[
|\vf|_{\cC^k}=\|\vf\circ \phi_i^{-1}\circ \bG\|_{\cC^k(B_{d_s}(z,\delta),\bC)}
\]
where, again, $\bG(x)=(x,G(x))$. We are finally ready to define the relevant norms.

For each $p\in \bN$, $q\in\bR_+$ and $h\in\cC^r(M,\bC)$ let\footnote{ Since, by definition, $W$ belongs to one chart we can define $\partial_{x_j}h:=(\partial_{x_j}(h\circ \phi_i^{-1}))\circ \phi_i$.}
  \begin{equation}\label{eq:geo-norm1}
  \|h\|_{p,q} = \sup_{|\alpha|\leq p}\;
  \sup_{W\in\Sigma^r}\;
  b^{|\alpha|}\sup_{ \substack{ \vf \in \cC_0^{q+|\alpha|}(W, \bC)
  \\ |\vf|_{\cC^{q+|\alpha|}} \leq 1}}\; \int_{W}
  [\partial^\alpha h] \cdot \vf,
  \end{equation}
where, for $W=W_{i,z,G}\in \Sigma^r_i$ and $g\in \cC^0(W,\bC)$ we define
\[
\int_W g=\int_{B_{d_s}(z,\delta)} g\circ \phi_i^{-1}(x, G(x)) dx,
\]
and $b$ will be chosen later.
$\cB^{p,q}$ is the closure of $\cC^r(M,\bC)$ with respect to $\|\cdot\|_{p,q}$.

The above norms have been introduced in \cite{GL1} and are the generalisation of the norms \eqref{eq:geo-norm0}. They allow to prove that the transfer operator is quasi compact with essential spectral radius smaller than $\lambda^{\min\{p,q\}}$. 

Here, to simplify the presentation, we discuss only the case $p\leq 1\leq q$ and we do not attempt to obtain sharp bounds. We refer to \cite{GL1} for the general case and more precise estimates.
As done in the previous section we aim at using Hennion's Theorem, to this end we need a Lasota-Yorke type inequality and a compactness result.\footnote{ From now on we consider $\delta$ fixed once an for all, hence we will often not mention the fact that several constants depend on $\delta$.}

\begin{lem}\label{lem:ly-h} 
For each $q\in (0,r-2)$, $p\in\{0,1\}$ and $\nu\in (\lambda^{-\min\{1,q\}},1)$ there exists $C,B>0$ such that, for all $h\in\cC^r(M,\bC)$ and $n\in\bN$,
\[
\begin{split}
&\|\cL^{n} h\|_{0,q}\leq C\|h\|_{0,q}\\
&\|\cL^{n} h\|_{p,q}\leq C\nu^{n}\|h\|_{p,q}+B\|h\|_{0,q+1}.
\end{split}
\]
\end{lem}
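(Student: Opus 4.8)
The plan is to transcribe the toral computation of Section~\ref{sec:geom-toral}, with Lemma~\ref{lem:invariance} playing the role of the explicit backward dynamics on stable leaves, and then to feed the resulting inequalities into Theorem~\ref{thm:hennion}. Fix $W=W_{i,z,G}\in\Sigma^r$, a multi-index $\alpha$ with $|\alpha|\le p\le 1$, and a test function $\vf\in\cC_0^{q+|\alpha|}(W,\bC)$ with $|\vf|_{\cC^{q+|\alpha|}}\le 1$. Starting from $\cL^nh=(h\,|\det Df^n|^{-1})\circ f^{-n}$ (see \eqref{eq:TO-gen}) I would change variables so that the integral over $W$ becomes an integral over $V_n:=f^{-n}W$: writing $g_n:=f^n|_{V_n}$ and $\mathcal J_n$ for its Jacobian in the flat chart coordinates, one gets for $|\alpha|=0$
\[
\int_W(\cL^nh)\,\vf=\int_{V_n}h\cdot\big[(\vf\circ f^n)\,|\det Df^n|^{-1}\,\mathcal J_n\big].
\]
For $n\ge\bar n$, Lemma~\ref{lem:invariance} covers $\overline{V_n}$ by a family $\{W_\ell\}_{\ell=1}^m\subset\Sigma^r$ of bounded multiplicity (with $m$ exponentially large in $n$) carrying a subordinated partition of unity $\{\pf_\ell\}$ of uniformly bounded $\cC^r$ norm; splitting the above integral accordingly reduces matters to a uniform bound on $\sum_\ell|\Psi_\ell|_{\cC^q(W_\ell)}$, where $\Psi_\ell=(\vf\circ f^n)\,|\det Df^n|^{-1}\,\mathcal J_n\,\pf_\ell$.

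This uniform bound is the heart of the inequality $\|\cL^nh\|_{0,q}\le\Const\|h\|_{0,q}$, and it is the main obstacle. I would argue as follows: bounded distortion of $f^n$ along the almost stable pieces (legitimate because $f\in\cC^r$ and $q<r-2$), together with the fact that $\vf\circ f^n$ has $\cC^q$ norm on $W_\ell$ bounded by $\Const|\vf|_{\cC^q}$ (composition with the contracting $f^n$), reduces $|\Psi_\ell|_{\cC^q(W_\ell)}$ to $\Const\sup_{W_\ell}\big(|\det Df^n|^{-1}\mathcal J_n\big)$; replacing the supremum by an average and recombining (bounded multiplicity) yields $\sum_\ell|\Psi_\ell|_{\cC^q}\le\Const\,\delta^{-d_s}\int_{V_n}|\det Df^n|^{-1}\mathcal J_n$, and one last change of variables turns this into $\Const\,\delta^{-d_s}\int_W\cL^n1$. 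Finally $\int_W\cL^n1$ is bounded uniformly in $n$ by a ``thickening'' step: $\cL^n1=|\det Df^n|^{-1}\circ f^{-n}$ is, by bounded distortion in the contracting-cone directions, comparable on $W$ to its values on the translates of $W$ in the unstable chart directions, so $\int_W\cL^n1\le\Const\int_M\cL^n1\,d\Leb=\Const\,\Leb(M)$. The strengthened inequality with the factor $\nu^n$ and the weak term $\|h\|_{0,q+1}$ then follows exactly as in Section~\ref{sec:geom-toral}: subtract from $\vf\circ f^n$ its Taylor polynomial of degree $q-1$ along the contracted coordinate; the remainder contributes $\Const\lambda^{-nq}\|h\|_{0,q}$ (and $\lambda^{-nq}\le\nu^n$ for the chosen $\nu$), while pairing $h$ with the polynomial part costs only $\Const\|h\|_{0,q+1}$.

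For $p=1$ I would in addition estimate $\int_W[\partial_{x_j}\cL^nh]\,\vf$ with $x_j$ an unstable chart coordinate and $\vf\in\cC_0^{q+1}$. After the same change of variables, $\partial_{x_j}(\cL^nh)$ becomes $\partial_{v_y}\big(h\,|\det Df^n|^{-1}\big)$ on $V_n$, where $v_y:=Df^{-n}e_j$ satisfies $Df^n(y)v_y=e_j$; split $v_y=v_y^u+v_y^s$ with $v_y^u\in E^u(y)$ (so $\|v_y^u\|\le\Const\lambda^{-n}$) and $v_y^s\in E^s(y)$ tangent to $V_n$. The $v_y^u$-part produces $(\partial_{v_y^u}h)\,|\det Df^n|^{-1}$, giving $\le\Const\lambda^{-n}\|h\|_{1,q}$, plus $h\cdot|\det Df^n|^{-1}\,\partial_{v_y^u}\log|\det Df^n|^{-1}$; the logarithmic derivative here is bounded uniformly in $n$, since along the contracted $E^u$ direction $\sum_{k=0}^{n-1}\|Df^kv_y^u\|=\sum_{j=1}^{n}\|Df^{-j}w\|\le\Const$, so after the Taylor subtraction this term is $\le\Const\nu^n\|h\|_{1,q}+\Const\|h\|_{0,q+1}$. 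The $v_y^s$-part is integrated by parts along $V_n$; the key cancellation is that $\mathcal J_n\,v_y^s$ is divergence-free on $V_n$ (being $\mathcal J_n$ times the pull-back of a constant vector field on $W$), so the Jacobian-derivative and divergence terms drop out and one is left with $-\int_W(\cL^nh)\cdot\partial_w\vf$, where $\partial_w\vf$ is a stable-direction derivative of $\vf$ bounded by $|\vf|_{\cC^{q+1}}$; one more Taylor subtraction bounds this by $\Const\nu^n\|h\|_{1,q}+\Const\|h\|_{0,q+1}$. Collecting the pieces yields the two claimed inequalities for any $\nu\in(\lambda^{-\min\{1,q\}},1)$.

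In short, the proof is a mechanical (if lengthy) transcription of the toral argument of Section~\ref{sec:geom-toral} once two non-routine inputs are granted: the uniform control over the exponentially many small pieces produced by Lemma~\ref{lem:invariance}, resting on bounded distortion of the Jacobian of $f^n$ along almost stable manifolds and the ``thickening'' reduction to $\int_M\cL^n1=\Leb(M)$; and, for $p=1$, the divergence-free identity for $\mathcal J_n v_y^s$ that renders the integration by parts along $V_n$ harmless. With Lemma~\ref{lem:ly-h} available, Remark~\ref{rem:compact-ball}, the analogue of Lemma~\ref{lem:compact} (the compactness of $\cB^{1,q}\hookrightarrow\cB^{0,q+1}$), and Theorem~\ref{thm:hennion} give that $\cL$ on $\cB^{p,q}$ has spectral radius $1$ and essential spectral radius at most $\lambda^{-\min\{p,q\}}$.
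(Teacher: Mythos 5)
Your overall architecture matches the paper's proof: change of variables to $f^{-n}W$, covering by $\Sigma^r$-manifolds via Lemma~\ref{lem:invariance}, the distortion sum as the engine of the $p=0$ bound, and a Taylor-type subtraction (the paper uses a mollifier $\vf_\ve$ instead, but this is an equivalent device) for the $\nu^n$ gain. Your ``thickening'' sketch for $\sum_j|\;|\det Df^n|^{-1}J_Wf^n\,|_{\cC^q(W_j)}\leq C_\delta$ also matches the intuition the paper records after Sub-Lemma~\ref{lem:sumdet}, though the paper simply cites \cite[Lemma 6.2]{GL1} for the full $\cC^q$ estimate, which requires more than the $\cC^0$ distortion you invoke.

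The genuine gap is in the $p=1$ step, and it is exactly the obstruction the paper flags explicitly. You split $v_y=Df^{-n}e_j$ as $v_y^u+v_y^s$ with $v_y^u\in E^u(y)$ and $v_y^s\in E^s(y)$ ``tangent to $V_n$''. Two problems. First, $T_yV_n\neq E^s(y)$: the manifold $f^{-n}W$ lies in the stable cone but is not an exact stable leaf, so the component tangent to $V_n$ is not the $E^s$-component, and a decomposition along the pair $(E^s,E^u)$ does not actually isolate a direction you can integrate by parts along $V_n$. Second — and this is fatal for the $\cC^q$ bounds you need, since $q$ can be up to $r-2>1$ — the distributions $E^u,E^s$ are in general only H\"older; a vector field valued in $E^u(y)$ cannot be controlled in $\cC^{q}$ (or $\cC^{1+q}$ for the integration-by-parts test function), so the resulting $\bar\vf$ is not an admissible test function. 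This is why the paper constructs the $\cC^r$ approximation $E_n(x)=D(\phi_{k_j}\circ f^n\circ\phi_i^{-1})E$ in \eqref{eq:u-app}, proves its uniform $\cC^r$ bound in Sub-Lemma~\ref{sublem:regular-dec}, and performs the decomposition of the constant field $e_k$ at $z\in W$ into a component tangent to $W$ and a component in $E_n$ (Sub-Lemma~\ref{lem:u-decomp}); both pieces then have $\cC^{1+q}$ norms bounded uniformly in $n$.

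A secondary issue: you assert that $\mathcal J_n\,v_y^s$ is divergence-free on $V_n$ because it is ``$\mathcal J_n$ times the pull-back of a constant vector field''. That identity would hold for $\mathcal J_n\,v_y=\mathcal J_n\,Df^{-n}e_j$, i.e.\ the \emph{full} pull-back, not for its tangential component $v_y^s$ alone; so the claimed cancellation does not occur. It is in fact unnecessary — the paper simply integrates by parts on $W$ (not on $V_n$), absorbs $\operatorname{div}\tilde w$ into a new admissible test function $\bar\vf$ with $|\bar\vf|_{\cC^q}\leq\Const$, and estimates by the weak norm. If you repair the regularity issue by switching to the paper's $E_n$, you should also drop the divergence-free claim and just bound the divergence term.
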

\begin{proof}
By a change of variables we have
\[
\int_W \cL^n h \vf=\int_{f^{-n}W} h\, |\det Df^n| J_W f^n\cdot \vf\circ f^n
\]
where $J_Wf^n$ is the Jacobian of the change of variables.\footnote{ Note that we are changing variables on a submanifold, hence the Jacobian differs from $|\det Df^n|$ which corresponds to a change of variables on the full manifold.} We can then use Lemma \ref{lem:invariance} to write
\[
\begin{split}
\left|\int_W \cL^n h \vf\right|&\leq \sum_{j=1}^m\left| \int_{W_j} h\, |\det Df^n|^{-1} J_Wf^n \cdot \vf\circ f^n \pf_j\right|\\
&\leq \|h\|_{0,q}  \sum_{j=1}^m\left||\det Df^n|^{-1} J_Wf^n \cdot \vf\circ f^n \pf_j\right|_{\cC_0^{q}(W_j)},
\end{split}
\]
where $W_j=W_{k_j,z_j,G_j}$.

Remembering Sub-Lemma \ref{sublem:alleg} and equation \eqref{eq:contractions} we can write
\[
\left||\det Df^n|^{-1} J_Wf^n \cdot \vf\circ f^n \pf_j\right|_{\cC_0^{q}(W_j)}\leq C_\delta \left| |\det Df^n|^{-1}\right |_{\cC^{q}(W_j)}
  \cdot \left|J_W f^n\right|_{\cC^{q}(W_j)} \left|\vf\right|_{\cC^{q}_0(W_j)}.
\]
To estimate the above integral we need a technical distortion Lemma.
\begin{sub-lem}[{\cite[Lemma 6.2]{GL1}}]\label{lem:sumdet} There exists $C_\delta>0$ such that, for each $n\in\bN$ and $q\leq r-1$,
holds true
  \[
  \sum_{i=1}^m\left| |\det Df^n|^{-1}\right|_{\cC^{q}(W_j)}
  \cdot \left|J_W f^n\right|_{\cC^{q}(W_j)}\leq C_\delta.
  \]
\end{sub-lem}
\begin{rem} I refer to \cite[Lemma 6.2]{GL1} for the proof, however let me give some intuition about this estimate. If $\lambda_{u}^n, \lambda_s^n$ are, roughly, the expansion and contraction in the unstable and stable directions, respectively, then $ |\det Df^n|^{-1}\sim \lambda_{u}^{-n} \lambda_s^{-n}$ while $J_Wf^n\sim  \lambda_s^{n}$. Hence the summands are roughly equal to $\lambda_{u}^{-n}$. However, if we consider a thickening of size $\lambda_{u}^{-n}$, in the unstable directions, of each $W_i$ then it corresponds to the image of a thickening of size one of $W$ under $f^{-n}$. Since the map is a diffeomorphism, this implies that all such regions are disjoint, thus their total volume (essentially $\sum_j \lambda_{u}^{-n}\delta^{d_s}$) is uniformly bounded by the total volume of $M$, hence the Lemma. The above argument is essentially correct, a part for some standard distortion estimates.
\end{rem}
\noindent Hence we have the first inequality in the statement of the Lemma:\footnote{ Recall that $\delta$ has been fixed and its choice depends only on $f$ and $M$, hence we will no longer keep track of the dependence of the constants from $\delta$. Also we will use, as before, $\Const$ to designate a generic constant depending only on $f$ and $M$.}
\begin{equation} \label{eq:lyuno}
\|\cL^{n} h\|_{0,q}\leq C\|h\|_{0,q}.
\end{equation}
To prove the second inequality we first consider the case $p=0$. We can write\footnote{ E.g., given  a mollifier $j_\ve$ having support $\ve\leq \delta/2$, define $\bar\vf_\ve=\int j_\ve(x-y) \vf\circ \phi_i^{-1}\circ \bG(y) dy$ and $\vf_\ve(z)=\bar\vf_\ve\circ \pi\circ \phi_i(z)$, where $\pi(x_s,x_u)=x_s$.}
\[
\begin{split}
\int_W \cL^n h \vf=\int_{\widetilde W} \cL^n h \vf=\int_{\widetilde W} \cL^n h \vf_\ve+\int_{\widetilde W} \cL^n h (\vf-\vf_\ve).
\end{split}
\]
where $|\vf_\ve-\vf|_{\cC^{q-1}}\leq \ve |\vf|_{\cC^{q}}$,  $|\vf-\vf_\ve|_{\cC^{q}}\leq \Const$ and $|\vf_\ve|_{\cC^{q+1}}\leq \Const \ve^{-1}$. 
It follows\footnote{ We use $\partial_x f^n$ to mean $\partial_x(\phi_i\circ f^n\circ \phi_{k_j}\circ \bG_j)$. Which is nothing else that the contraction of the dynamics in the stable direction.}
\[
\begin{split}
|(\vf- \vf_\ve)\circ f^n|_{\cC^q}&\leq |(\partial^q\vf-\partial^q\vf_\ve)\circ f^n\cdot (\partial_xf^u)^q|_{\cC^0}+\Const |(\vf- \vf_\ve)\circ f^n|_{\cC^{q-1}}\\&\leq\Const \max\{\ve, \lambda^{-qn}\}.
\end{split}
\]
Arguing as before, and choosing $\ve=\lambda^{-qn}$, the above considerations yield
\begin{equation}\label{ly-gen-0}
\| \cL^n h \|_{0,q}\leq \Const \lambda^{-qn}\|h\|_{0,q}+C_n\|h\|_{0,q+1}.
\end{equation}
To continue we must compute
\[
(\partial_{x_k}(\cL^n h\circ \phi_i^{-1}))\circ \phi_{k_j} (x).
\]
To this end we must exchange the order of $\partial_{x_k}$ and $\cL^n$. Unfortunately, doing so will produce a multiplicative factor larger than one due to the contracting directions. A natural idea to overcome this problem is to decompose the vector fields $\partial_{x_k}$ into a vector field along the manifold $W$, that can then be integrated by part without the need of commuting it with $\cL^n$, and a vector field in the unstable direction that, upon exchanging the order of $\partial_{x_k}$ and $\cL^n$ will produce a contracting multiplicative factor. The obstacle to this strategy is that the unstable vector field is, in general, only H\"older, and hence a vector field along the unstable direction cannot have the required regularity. 

To deal with this last problem we will use an approximation  instead of the real unstable direction. Indeed, what is really necessary is that the vector field contracts, while being pushed backward, only for a time $n$. If $E=\{(0,\eta)\in\bR^{d_s}\times \bR^{d_u}\}$, then 
\begin{equation}\label{eq:u-app}
E_n(x)=D_{\phi_i\circ f^{-n}\circ \phi_{k_j}^{-1}(x)}(\phi_{k_j} \circ f^n \circ\phi_i^{-1})E=\{(U_n(x)\eta, \eta)\}_{\eta\in\bR^{d_u}}
\end{equation}
is an $\cC^r$ approximation of the unstable direction with the required property.
\begin{sublem}[{\cite[Appendix A]{GL1}}]\label{sublem:regular-dec}
Given the decomposition \eqref{eq:u-app}, we have 
\[
\|U_n\circ \phi_i\circ f^n\circ \phi_{k_j}^{-1}\circ \bG_j\|_{\cC^r(B_{d_s}(z_j,\delta),\bR^d)}\leq \Const.
\]
\end{sublem}
\begin{rem}
The Lemma is technical and the proof is rather uneventful, so I refer to \cite[Appendix A]{GL1} for the details. However, the reader unwilling to look at another paper can simply carry out a proof by herself using the analogous of \eqref{eq:derivatives-ops} and \eqref{eq:normbound1} in the future rather than the past.
\end{rem}
\begin{sublem}\label{lem:u-decomp}
For each $k\in\{1,\dots,d\}$, $n\in\bN$ and $z\in W\in\Sigma^r$ we can write
\[
e_k= v(z)+w(z)
\]
where $v(z)\in T_zW$, $w(z)\in E_n(\phi_i(z))$ and such that 
\[
|v\circ f^n|_{\cC^r( f^{-n}W,\, \bR^d)}+|w\circ f^n|_{\cC^r(f^{-n}W,\, \bR^d)}\leq \Const.
\]
\end{sublem}
\begin{proof}
Since $T_zW$ and $E_n(\phi_i(z))$ are transversal (the first belong to the stable cone while the second to the unstable one), we can uniquely decompose a vector field along such two subspaces and the decomposed vector field will have uniformly bounded $\cC^0$ norm. It remains only to check is that the decomposition has the required regularity. Since $W$ is a regular manifold, the issue is reduced to analysing $E_n(\phi_i(z))$. The result follows then from Lemma \ref{sublem:regular-dec}. Indeed, the computation boils down to compute the norms of $(\Id-DG U_n)^{-1}\circ \phi_i\circ f^n$ and $(\Id- U_nDG)^{-1}\circ\phi_i\circ  f^n$.  These are uniformly bounded in $\cC^0$, since $\|U_n\|_\infty\|DG\|_\infty<1$ (provided we have chosen the $r_i$ small enough), and the $\cC^k$ norm can be computed by induction recalling the definition \eqref{eq:b-norms}.
\end{proof}
Accordingly, for each $k\in\{1,\dots, d\}$,
\begin{equation}\label{eq:decomp-int-u}
\int_W\vf\partial_{x_k}\cL^n h=\int_W\vf \langle w,\nabla\cL^n h\rangle+\vf \langle v,\nabla\cL^n h\rangle.
\end{equation}
By construction and Lemma \ref{lem:u-decomp} there exists $\tilde w$, $\|\tilde w\|_{\cC^{1+q}(\bR^{d},\bR^{d_s})}\leq \Const$, such that $(\vf w)\circ\phi_i^{-1}\circ\bG=D\phi_i^{-1}D\bG \tilde w$. Hence
\[
\begin{split}
\int_W\langle w,\nabla\rangle\cL^n h&=\int_{B_{d_s}(z,\delta)}\langle D\phi_i^{-1}D\bG \tilde w, \left[ \nabla\cL^n h\right]\circ \phi_i^{-1}(\bG(x)) \rangle dx\\
&=\int_{B_{d_s}(z,\delta)} \langle \tilde w, \nabla\left[(\cL^n h)\circ \phi_i^{-1}\circ\bG\right] \rangle dx\\
&=-\int_{B_{d_s}(z,\delta)} (\operatorname{div}\tilde w) \left[\cL^n h\right]\circ \phi_i^{-1}(\bG(x)) dx\\
&=\int_W \bar \vf \cL^n h
\end{split}
\]
where $\bar\vf= \left[\operatorname{div}\tilde w\right]\circ \pi\circ \phi_i$, $\pi(x,y)=x$. Since $|\bar\vf |_{\cC^{q}}\leq \Const$ by \eqref{eq:lyuno} it follows
\begin{equation}\label{eq:u-decomp-stable}
b\left|\int_W\langle w,\nabla\rangle\cL^n h\right|\leq \Const b \|h\|_{0,q}\leq \Const b \|h\|_{1,q}.
\end{equation}
To conclude we must analyse the second term on the right hand side of equation \eqref{eq:decomp-int-u}. Recalling \eqref{eq:TO-gen} we can write
\[
\begin{split}
\int_W\vf \langle v,\nabla\cL^n h\rangle&=\int_W\vf \langle v,\nabla\left[ (h |\det Df^n|^{-1})\circ f^{-n}\right]\rangle\\
&=\int_W\vf \langle Df^{-n} v,\left[\nabla (h |\det Df^n|^{-1})\right]\circ f^{-n}\rangle\\
&=\int_W\langle \bar v, \cL^n \nabla h\rangle +\int_W \bar \vf \cL^n  h,
\end{split}
\]
where $\bar v=\vf  Df^{-n} v$ and $\bar \vf= \vf \langle Df^{-n} v,\left[\nabla ( |\det Df^n|^{-1})\right]\circ f^{-n}\rangle$. 

By construction we have
$\|\bar v\|_\infty\leq \Const \lambda^{-n}$, and the usual distortion estimated yield $\|\bar v\|_{\cC^{1+q}}\leq \Const \lambda^{-n}$.
We can then use \eqref{eq:lyuno} and the obvious inequality $b\|\partial_{x_j}h\|_{0,q+1}\leq \|h\|_{1,q}$ to write
\begin{equation}\label{eq:u-decomp-unstable}
\begin{split}
b\left|\int_W\vf \langle v,\nabla\cL^n h\rangle\right|&\leq \Const \lambda^{-n}\|h\|_{1,q}+C_n b\|h\|_{q+1}.
\end{split}
\end{equation}
Collecting equations \eqref{ly-gen-0}, \eqref{eq:decomp-int-u}, \eqref{eq:u-decomp-stable} and \eqref{eq:u-decomp-unstable} yields
\[
\|\cL^n h\|_{1,q}\leq C_*\max\{\lambda^{-q},b^{1/n},\lambda^{-1}\}^{n}\|h\|_{1,q}+(b+1)C_n\|h\|_{0,q+1},
\]
for some constant $C_*$.
We are almost done, the only remaining source of unhappiness is that the constant in front of the weak norm seems to depend on $n$, also we have still to choose  $b$. 

Let us first choose the smallest $\bar n$ such that at $C_*\lambda^{-\bar n\min\{q,1\}}\leq \nu^{\bar n}$.  Then we choose 
\[
b=\nu^{\bar n} C_*^{-1}.
\]
At last, for each $n\in\bN$ we write $n=k\bar n+m$, $m<\bar n$, and
\[
\begin{split}
\|\cL^n h\|_{1,q}&\leq \nu^{\bar n}\|\cL^{n-\bar n}h\|_{1,q}+2 C_{\bar n} \|\cL^{n-\bar n}h\|_{0,q+1}\leq \nu^{\bar n}\|\cL^{n-\bar n}h\|_{1,q}+\Const\|h\|_{0,q+1}\\
&\leq\nu^{k\bar n}\|\cL^{m}h\|_{1,q}+\Const\sum_{j=0}^{k-1}\nu^{j\bar n}\|h\|_{0,q+1}\leq \Const \nu^n \|h\|_{1,q}+\Const\|h\|_{0,q+1}.
\end{split}
\]
This concludes the Lemma.
\end{proof}
\begin{rem} Note that the Lasota-Yorke inequality is proven in Lemma \ref{lem:ly-h} only for $h\in\cC^r$. However by density it follows immediately that it holds for all $h\in\cB^{p,q}$.
\end{rem}
The last ingredient of the argument is the compactness of $\cL$.
\begin{lem}\label{lem:compact-h}
For each $q>q'>0$ the operator $\cL: \cB^{1,q'}\to \cB^{0,q}$ is compact.
\end{lem}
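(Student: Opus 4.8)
The plan is to split the dynamics off from the compactness. By Lemma~\ref{lem:ly-h} (used with $n=1$, $p=1$, together with the trivial bound $\|h\|_{0,q'+1}\le\|h\|_{1,q'}$) the operator $\cL$ is bounded from $\cB^{1,q'}$ to itself; since a compact operator pre-composed with a bounded one is compact, it suffices to show that the inclusion $\cB^{1,q'}\hookrightarrow\cB^{0,q}$ is compact whenever $q>q'>0$ (in the admissible range $q'<q<r-2$) --- equivalently, that every sequence $\{h_n\}\subset\cC^r(M,\bC)$ with $\|h_n\|_{1,q'}\le 1$ has a subsequence that is Cauchy for $\|\cdot\|_{0,q}$; separability then upgrades this to relative compactness of the unit ball. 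This is the analogue of Lemma~\ref{lem:compact}; the genuinely new point is that the family of admissible manifolds $\Sigma^r$ is now infinite dimensional, so besides a net of base points and a net of test functions one also needs a finite net of manifolds.

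For $W=W_{i,z,G}\in\Sigma^r$ and a smooth test function $\vf$ along $W$ write $h_{W,\vf}:=\int_W h\,\vf$. The key step is an equi--Lipschitz estimate for $h_{W,\vf}$ as a function of the manifold, i.e. of $(z,G)$, uniform over the unit ball $\{\|h\|_{1,q'}\le 1\}$ and over $\vf$ with $|\vf|_{\cC^{q'+1}}\le 1$. Perturbing $z$ or $G$ produces $\nabla h$ paired against $\vf$: the component of $\nabla h$ tangent to $W$ is integrated by parts back onto the test function (this costs one derivative on $\vf$, which is exactly why the target index $q$ may exceed $q'$ and why $\cC^{q'+1}$ test functions suffice here), while the transverse, roughly unstable, component is paired with a $\cC^{q'+1}$-bounded coefficient and is therefore controlled by the $|\alpha|=1$ term of $\|h\|_{1,q'}$; here one uses that all $W\in\Sigma^r$ have uniformly bounded $\cC^r$ geometry, together with the standard distortion bounds for the parametrisation Jacobian, which need the margin $q'+1<r-1$. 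This shows $W\mapsto h_{W,\vf}$ is $\Const$-Lipschitz in the $\cC^{q'+1}$-distance on $\Sigma^r_i$ and in the Euclidean distance on the $z$-parameter, with $\Const$ independent of $n$.

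The extraction then follows the diagonal argument of Lemma~\ref{lem:compact}. Fix $\ve>0$: there are finitely many charts $i$; the $z$-parameter set is compact, hence has a finite $\ve$-net; $\Sigma^r_i$ is bounded in $\cC^r$, hence --- as $q'+1<r$ --- totally bounded in $\cC^{q'+1}$, hence has a finite $\ve$-net $\{G_\ell\}$; and since $q>q'$, the unit ball of $\cC^q$ (the test functions entering $\|\cdot\|_{0,q}$, which has $p=0$, so only $\alpha=0$ appears) is totally bounded in $\cC^{q'}$, hence has a finite $\ve$-net $\{\vf_m\}$ of smooth compactly supported functions. For this finite collection of data the scalars $(h_n)_{W_{i,z_k,G_\ell},\vf_m}$ lie in a bounded subset of $\bC$, so along a subsequence they all converge; combining this with the equi--Lipschitz estimate of the previous paragraph and with $|\int_W (h_n-h_{n'})(\vf-\vf_m)|\le\|h_n-h_{n'}\|_{1,q'}\,|\vf-\vf_m|_{\cC^{q'}}\le 2\ve$ (the $\alpha=0$ term of the strong norm) yields $\|h_n-h_{n'}\|_{0,q}\le\Const\,\ve$ along that subsequence for $n,n'$ large. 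Taking $\ve=2^{-m}$ and diagonalising gives the desired Cauchy subsequence.

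The step I expect to be the real obstacle is the uniform control --- over the whole unit ball of $\cB^{1,q'}$ at once --- of the variation of $\int_W h\,\vf$ under perturbations of $W$ inside $\Sigma^r$; this is precisely where the regularity margin $q'<r-2$ (so that $\Sigma^r_i$ is relatively compact in $\cC^{q'+1}$) and the fact that the transverse part of $\nabla h$ is captured by the $|\alpha|=1$ term of the norm are both indispensable. Everything else is Arzel\`a--Ascoli bookkeeping essentially identical to the toral case treated in Lemma~\ref{lem:compact}.
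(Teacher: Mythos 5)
Your proposal is correct and follows exactly the route the paper indicates: the paper's ``proof'' is simply the remark that the argument ``proceeds along the same lines as Lemma~\ref{lem:compact},'' and you have supplied that exercise, correctly identifying the one genuinely new ingredient --- a finite net in the (now infinite-dimensional) family of admissible manifolds, available because $\Sigma_i^r$ is bounded in $\cC^r$ and hence totally bounded in $\cC^{q'+1}$ --- on top of the nets in base point and test function and the diagonal extraction already present in the toral case. One small caveat you may wish to flag: your Lipschitz-in-$W$ estimate pairs $\partial_{x_k}h$ against a coefficient built from $(G'-G)$ and the test function, and to control this by the $|\alpha|=1$ part of $\|\cdot\|_{1,q'}$ the coefficient must be $\cC^{q'+1}$-bounded; this is clean when $q\ge q'+1$ (the only case actually invoked in the paper, namely $q=q'+1$ coming from Lemma~\ref{lem:ly-h}), whereas for $q'<q<q'+1$ the $\cC^{q'+1}$-norm of the net functions $\vf_m$ is only $\ve$-dependent, so the manifold net must be chosen correspondingly finer; this is harmless but worth saying explicitly.
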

\begin{proof}
The proof proceeds along the same lines as Lemma \ref{lem:compact} and is left to the reader as a useful exercise.
\end{proof}
Lemmata \ref{lem:ly-h} and \ref{lem:compact-h}, together with Theorem \ref{thm:hennion}, imply  that $\cL$ has spectral radius one and essential spectral radius bounded by $\nu$. 
%%%%%%%%%%%%%%%%%%
\subsection{Low regularity norms}\label{sec:lowreg}
Here we consider norms adapted to maps with minimal regularity. Such norms are inspired to \cite{DL} (of which they constitute a simplification) where they have been developed to treat maps with singularities. Subsequently they have been  modified to study  the statistical properties of billiards in \cite{DZ1, DZ2, DZ3, BDL}. However, such norms turn out to be useful also in treating $\cC^{1+\alpha}$ maps, with $\alpha\in (0,1)$.

The problem in handling the $f\in\cC^{1+\alpha}$, $\alpha\in (0,1)$, comes from the fact that $p\in\bN$, thus the minimal, non trivial, allowed $p$ is $1$ while the arguments of the previous section need, at least, that $p\leq\alpha$. To overcome this limitation one must introduce the equivalent of a H\"older or Sobolev norm in the unstable direction. This can be done in many ways, the one proposed in \cite{DL} being the most geometrical. 

The basic idea is that any distribution $h$ that can be integrated along a stable curve naturally gives rise to a function 
\[
\Psi(h) : \Omega_q=\{(W,\vf)\;:\; W\in \Sigma^{1+\alpha}, \|\vf\|_{\cC_0^q(W,\bC)}\leq 1\}\to \bC
\]
defined as
\[
\Psi(h)(W,\vf):=\int_W h\vf .
\]
Thus it suffices to define a distance on $\Omega_q$ and impose and H\"older regularity on $\Psi(h)$ with respect to such a distance.
As we find convenient to work in charts we will define a distance in each $\Omega_{i,q}=\{(W,\vf)\;:\; W\in \Sigma_i^{1+\alpha}, \|\vf\|_{\cC_0^q(W,\bC)}\leq 1\}$. Note that the sets  $\Omega_{i,q}$ are not disjoint, yet we will consider their disjoint union, so an object with two different representations will be treated as two different objects.
Then, for each $(W_{i,z,G}, \vf), (W_{i,z',G'}, \vf')\in \Omega_{i,q}$ we define
\begin{equation}\label{eq:distance}
\begin{split}
d((W_{i,z,G}, \vf),(&W_{i,z',G'}, \vf'))=\|z-z'\|+\|G\circ\tau_z- G'\circ\tau_{z'}\|_{\cC^0(B_{d_s}(0, 2\delta))}\\
&+\|\vf\circ\phi_i^{-1}\circ \bG\circ \tau_{z}-\vf'\circ\phi_i^{-1}\circ \bG'\circ\tau_{z'}\|_{\cC_0^q(B_{d_s}(0, \delta))}
\end{split}
\end{equation}
where $\tau_z(x)=x+z$ and $\bG(x)=(x,G(x))$. The reader can easily check that the above is a semi-metric in $\Omega_{i,q}$. Indeed, two curves with the same centre that differ only outside a ball of radius $2\delta$ have zero distance. This is reasonable as the value of $G$ outside such a ball is totally irrelevant and we defined $G$ on all the space just for convenience, while the introduction of enlarged manifolds was simply a device to avoid invoking some fancy extension theorem to enlarge our manifolds when needed. Thus, it is natural to  consider the equivalence classes with respect to the equivalence relation $W\sim W'$ iff $d(W,W')=0$. In the following we will do so without further mention. We have thus defined a metric and we can now define, for each $p<q<\alpha$, and $a>0$, to be chosen later,
\[
\|h\|_{p,q}=a\|h\|_{0,q-p}+\sup_{i}\sup_{\substack{ (W,\vf),(W',\vf')\in\Omega_i^{q}\\d((W,\vf),(W',\vf')\leq \delta/4}} \frac{\left|\int_W h\vf-\int_{W'}h \vf'\right|}{d((W,\vf),(W',\vf'))^p}.
\]
Once the norms are defined we can again close the $\cC^{1+|\alpha|}$ functions with respect to the norms $\|\cdot\|_{0,q}$ and $\|\cdot\|_{p,q}$ to obtain the spaces $\cB^{0,q}$ and $\cB^{p,q}$, respectively.
Next, we need to prove the Lasota-Yorke inequalities. 
\begin{lem}\label{lem:ly-h1} For each $1>\alpha> q>p>0$ and $\nu\in (\lambda^{-\min\{p,q-p\}},1)$ there exist $C,B>0$ such that, for all $h\in\cC^{1+\alpha}(M,\bC)$,
\[
\begin{split}
&\|\cL^{n} h\|_{0,q}\leq C\|h\|_{0,q}\\
&\|\cL^{n} h\|_{p,q}\leq C\nu^{n}\|h\|_{p,q}+B\|h\|_{0,q}.
\end{split}
\]
\end{lem}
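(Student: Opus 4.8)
The plan is to follow the architecture of Lemma~\ref{lem:ly-h}, with the integration by parts (unavailable now that $p<1$) replaced by a direct comparison of the functional $h\mapsto\int_W h\,\vf$ on nearby pairs $(W,\vf)$; the only genuinely new input is a geometric estimate describing how the pre-images of two nearby manifolds track each other.

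I would first dispose of the weak part of $\|\cdot\|_{p,q}$. The bound $\|\cL^n h\|_{0,q}\le\Const\|h\|_{0,q}$ is proven verbatim as \eqref{eq:lyuno}: change variables, $\int_W\cL^n h\,\vf=\int_{f^{-n}W}h\,|\det Df^n|^{-1}J_Wf^n\cdot(\vf\circ f^n)$, cover $\overline{f^{-n}W}$ by pieces $W_j\in\Sigma^{1+\alpha}$ with a subordinated partition of unity $\{\pf_j\}$ via Lemma~\ref{lem:invariance}, bound $|\vf\circ f^n|_{\cC^q(W_j)}\le|\vf|_{\cC^q(W)}$ by Sub-Lemma~\ref{sublem:alleg}, and sum using Sub-Lemma~\ref{lem:sumdet}. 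Running the same computation with $q$ replaced by $q-p$ and inserting the mollification step that produced \eqref{ly-gen-0} yields the sharper $\|\cL^n h\|_{0,q-p}\le\Const\lambda^{-(q-p)n}\|h\|_{0,q-p}+C_n\|h\|_{0,q-p+1}$; since $q-p+1>q$ the weak term on the right is $\le C_n\|h\|_{0,q}$, and since $a\|h\|_{0,q-p}\le\|h\|_{p,q}$ and $\lambda^{-(q-p)}\le\nu$ this controls the term $a\|\cL^n h\|_{0,q-p}$ of $\|\cL^n h\|_{p,q}$ by $\Const\nu^n\|h\|_{p,q}+C_n\|h\|_{0,q}$.

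The heart of the matter is the H\"older seminorm. Fix $(W,\vf),(W',\vf')\in\Omega_i^q$ with $d:=d((W,\vf),(W',\vf'))\le\delta/4$ and change variables on both. Since $f^{-n}$ contracts the unstable directions by $\lambda^{-n}$, the long manifolds $f^{-n}W$ and $f^{-n}W'$ coincide as subsets of $M$ except near their boundaries, over a region whose stable extent is $O(\lambda^n d)$; consequently the coverings of Lemma~\ref{lem:invariance} can be chosen with a common index set so that, for all but $O(\lambda^n d)$-many indices $j$, the pieces $W_j,W_j'$ lie over the same base ball $B_{d_s}(z_j,\delta)$ in a common chart and their graph functions satisfy $\|G_j-G_j'\|_{\cC^0}\le\Const\lambda^{-n}d$, while the remaining, ``unmatched'' indices have total weight $\sum_j|\Phi_j|_{\cC^q(W_j)}\le\Const d$ (there are $O(\lambda^n d)$ of them, each of weight $O(\lambda^{-n})$ by Sub-Lemma~\ref{lem:sumdet}). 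Writing $\Phi_j$ (resp.\ $\Phi_j'$) for $|\det Df^n|^{-1}J_Wf^n\cdot(\vf\circ f^n)\pf_j$ (resp.\ with primes) read as a function of the base variable, I would split each matched summand as
\[
\int_{W_j}h\,\Phi_j-\int_{W_j'}h\,\Phi_j'=\left[\int_{W_j}h\,\Phi_j-\int_{W_j'}h\,\Phi_j\right]+\int_{W_j'}h\,(\Phi_j-\Phi_j').
\]
The first bracket is the seminorm of $h$ evaluated on two $\Const\lambda^{-n}d$-close manifolds carrying the same (base-parametrised) test function, hence $\le\|h\|_{p,q}\,|\Phi_j|_{\cC^q(W_j)}\,(\Const\lambda^{-n}d)^p$; summing over $j$ and using Sub-Lemmata~\ref{sublem:alleg} and~\ref{lem:sumdet} exactly as for \eqref{eq:lyuno} gives $\le\Const\lambda^{-np}d^p\|h\|_{p,q}$, i.e., after dividing by $d^p$, a contribution $\le\Const\lambda^{-np}\|h\|_{p,q}\le\Const\nu^n\|h\|_{p,q}$ since $\lambda^{-p}\le\nu$. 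The second bracket, together with the unmatched terms, is absorbed into the weak norm: the Lipschitz (in $d$, with an $n$-dependent constant) dependence of the Jacobians, the partition of unity and $\vf\circ f^n$ on the datum $(W,\vf)$ gives $|\Phi_j-\Phi_j'|_{\cC^q}\le C_n\,\theta_j\,d$ with $\sum_j\theta_j\le\Const$, so the total is $\le C_n\|h\|_{0,q}\,d$, i.e., after dividing by $d^p$, $\le C_n\,d^{1-p}\|h\|_{0,q}\le C_n(\delta/4)^{1-p}\|h\|_{0,q}$.

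Collecting the estimates gives $\|\cL^n h\|_{p,q}\le\Const\lambda^{-n\min\{p,q-p\}}\|h\|_{p,q}+C_n\|h\|_{0,q}$, and the $n$-dependence of the weak constant is removed exactly as at the end of Lemma~\ref{lem:ly-h}: choose $\bar n$ with $\Const\lambda^{-\bar n\min\{p,q-p\}}\le\nu^{\bar n}$, write $n=k\bar n+m$ with $m<\bar n$, and iterate using the already established uniform bound $\|\cL^m h\|_{0,q}\le\Const\|h\|_{0,q}$. I expect the main obstacle to be precisely the geometry of the third paragraph: constructing genuinely matched coverings (and matched partitions of unity) of $f^{-n}W$ and $f^{-n}W'$ out of data at distance $d$, and proving the quantitative estimates $\|G_j-G_j'\|_{\cC^0}\le\Const\lambda^{-n}d$ and $|\Phi_j-\Phi_j'|_{\cC^q}\le C_n\theta_j d$; once these are in hand, everything else is a repetition of the distortion and composition bounds already established for the high-regularity norms.
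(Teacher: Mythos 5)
Your split of each matched summand into $\bigl[\int_{W_j}h\,\Phi_j-\int_{W_j'}h\,\Phi_j\bigr]+\int_{W_j'}h\,(\Phi_j-\Phi_j')$ is precisely the architecture the paper uses (there the role of ``same base test function on both manifolds'' is played by $\bar\vf_j=\vf\circ f^n\circ\psi_j$ with $\psi_j$ the unstable projection, and your ``unmatched'' terms correspond to the defect $\sum_j\pf_j'-1$ of \eqref{eq:almost-p}). However the step ``$|\Phi_j-\Phi_j'|_{\cC^q}\le C_n\,\theta_j\,d$'' is false, and it is not a minor technicality: for $q<1$ the translation operator is \emph{not} continuous on $\cC^q$. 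The pullbacks of $\vf\circ f^n$ to the common base of $W_j$ and $W_j'$ differ by a shift of the argument of order $d$ (the $\lambda^{-n}d$ separation of $W_j$ and $W_j'$ is exactly undone by $f^n$), and the $\cC^q$-norm of $x\mapsto\vf(y(x))-\vf(y'(x))$ with $|y-y'|\sim d$ does \emph{not} tend to $0$ with $d$ — it is only $O(d^q)$ in $\cC^0$ and $O(1)$ in $\cC^q$. The same problem appears for the Jacobian factors, which for a $\cC^{1+\alpha}$ map are only $\cC^\alpha$ along $W$. Since you must divide the whole H\"older-seminorm contribution by $d^p$, any term that is not $O(d^p)$ destroys the estimate. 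The correct and attainable bound is the one the paper proves in \eqref{eq:jacobian} and \eqref{eq:vfbarvf}: drop to the lower class $\cC^{q-p}$ and pay only $d^p$, i.e.\ $|\vf_j'-\bar\vf_j|_{\cC^{q-p}}\le\Const\,d^p$ and $|Z_j'\gamma_j'-Z_j\gamma_j\circ\psi_j|_{\cC^{\alpha-p}}\le\Const\,d^pZ_j$.

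This also changes the bookkeeping at the end. The weak contribution coming from this term is $\Const\|h\|_{0,q-p}$ (not $C_n d^{1-p}\|h\|_{0,q}$), and $\|h\|_{0,q-p}$ is \emph{larger} than $\|h\|_{0,q}$, so it cannot simply be swept into the $B\|h\|_{0,q}$ term. The paper absorbs it via $\|h\|_{0,q-p}\le a^{-1}\|h\|_{p,q}$, which is why the final Lasota–Yorke inequality reads $\|\cL^n h\|_{p,q}\le C_*\max\{\lambda^{-np},a^{-1},\lambda^{(p-q)n}\}\|h\|_{p,q}+C_n\|h\|_{0,q}$ and why the weight $a$ in the definition of $\|\cdot\|_{p,q}$ must then be chosen large (so that $a^{-1}\le\nu^{\bar n}$), in parallel with the choice of $b$ at the end of Lemma~\ref{lem:ly-h}. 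Your proposal omits both the $a^{-1}$ term and the choice of $a$. The geometric part you flag as ``the main obstacle'' — constructing matched coverings with $\|G_j-G_j'\|_{\cC^0}\le\Const\lambda^{-n}d$ — is indeed what the paper does (via the graph transform and $\psi_j$), so your intuition there is right; the genuine gap is the regularity class and $d$-power of the test-function comparison, and the resulting need to exploit the structure of the norm through the weight $a$.
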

\begin{proof}
The first inequality has been proven in Lemma \ref{lem:ly-h}. In addition, by \eqref{ly-gen-0},\footnote{ Since $\|h\|_{0,q'}\leq \|h\|_{0,q''}$ for all $q''\leq q'$ and $q-p+1>q$.}
\begin{equation}\label{ly-gen-1}
\| \cL^n h \|_{0,q-p}\leq \Const \lambda^{-(q-p)n}\|h\|_{0,q-p}+C_n\|h\|_{0,q}.
\end{equation}

 For the second, let $(W,\vf)=(W_{i,z,G},\vf),(W',\vf')=(W_{i,z',G'},\vf')\in\Omega_i^{q}$ and recall from the beginning of the proof of Lemma \ref{lem:ly-h} that 
\[
\begin{split}
\int_{W_{i,z,G}}\hskip-6pt \cL^n h\vf&=\sum_{j=1}^m\int_{W_{k_j, z_j,G_j}} h|\det Df^n|^{-1} J_Wf^n \cdot \vf\circ f^n \pf_j.
\end{split}
\]
Let $\widehat W_{k_j, z_j,G_j}=\phi_{k_j}^{-1}(\{\bG_j(x)\}_{x\in B_{d_s}(z_j,\delta/2)})$ be the restriction of  $W_{k_j, z_j,G_j}$.
Since the construction of the decomposition holds for any choice of $\delta$, we can arrange so that $\supp \pf_j\subset \widehat W_{k_j, z_j,G_j}$ and that $\cup_j\widehat W_{k_j, z_j,G_j}\supset f^{-n}\widetilde W$. Let $G_{j}'$ be the function describing the part of the graph of $f^{-n}W'$ in the chart $U_{k_j}$ which is $\Const d(W,W')\lambda^{-n}$ close to $W_{k_j,z_j,G_j}$. Then $\{W_{k_j, z_j, G_j'}\}$ is a covering of $f^{-n}W'$. Next we define $\psi_j: W_{k_j, z_j, G_j'}\to W_{k_j, z_j, G_j}$ as
\[
\psi_j(\zeta)=\phi_{k_j}^{-1}\circ \bG_j\circ \pi\circ \phi_{k_j}(\zeta),
\]
where $\pi(x,y)=x$.
Setting $\pf_j'=\pf_j\circ\psi_j$ we have
\[
\pf'_j\circ \phi_{k_j}^{-1}\circ \bG'_j(x)=\pf_j\circ \phi_{k_j}^{-1}\circ \bG_j(x).
\]
If $I_\zeta=\{j\;:\; \pf_j\circ f^{-n}(\zeta)>0\}$, then, by definition, $\sum_{j\in I(\zeta)}\pf_j\circ f^{-n}(\zeta)=1$.   For all $j,j'\in I(\zeta)$, we have $d(\psi_j(\zeta), \psi_{j'}(\zeta))\leq \Const \lambda^{-n}d(W,W')$.\footnote{ Indeed,  $\psi_{j}(\zeta)\neq \psi_{j'}(\zeta)$ only if $k_j\neq k_{j'}$. In such a case the vertical movement in the chart $k_{j'}$ will correspond to a movement in a different vertical direction in the chart $k_j$ (but always inside the unstable cone). Since the  manifolds $W_{k_j,z_j,G_j}$ and $W_{k_j,z_j,G'_j}$ are at a distance less than $\Const \lambda^{-n}d(W,W')$, it follows that the point can move horizontally by at most $\Const \lambda^{-n}d(W,W')$.} 
Accordingly, 
\begin{equation}\label{eq:almost-p}
\left|\sum_j\pf'_j-1\right|_{\cC^1}\leq \Const d(W,W').
\end{equation}
Next we set 
\[
\begin{split}
&Z_j=|\;|\det Df^n|^{-1} J_Wf^n\;|_{\cC^q(W)}\;;\quad
Z'_j=|\;|\det Df^n|^{-1} J_{W'}f^n\;|_{\cC^q(W')}\\
&\gamma_j=Z_j^{-1}|\det Df^n|^{-1} J_Wf^n \;;\quad
\gamma'_j=(Z_j')^{-1}|\det Df^n|^{-1} J_{W'}f^n \\
&\vf_j=\vf\circ f^n\;;\quad \vf'_j=\vf'\circ f^n\\
&\bar \vf_j=\vf\circ f^n\circ \psi_j.
\end{split}
\]
By the usual distortion arguments if follows that
\begin{equation}\label{eq:jacobian}
|Z'_j\gamma'_j- Z_j\gamma_j\circ \psi_j|_{\cC^{\alpha-p}}\leq \Const d(W,W')^p Z_j.
\end{equation}
In addition,
\[
\vf'_j\circ \phi^{-1}_{k_j}\circ\bG'_j(x)-\bar\vf_j\circ \phi_{k_j}^{-1}\circ\bG'_j(x)=\vf'_j\circ \phi_{k_j}^{-1}\circ\bG'_j(x)-\vf_j\circ \phi_{k_j}^{-1}\circ\bG_j(x)
\]
hence, recalling Sub-Lemma \ref{sublem:alleg} and definition \eqref{eq:distance},
\begin{equation}\label{eq:vfbarvf}
|\vf'_j-\bar\vf_j|_{\cC^{q-p}}\leq \Const d((W, \vf),(W',\vf'))^p.
\end{equation}
Then, recalling \eqref{eq:almost-p} and Sub-Lemma \ref{lem:sumdet},
\[
\left|\int_{W_{i,z',G'}}\hskip-12pt \cL^n h\vf'-\sum_{j=1}^m\int_{W_{k_j, z_j,G'_j}}\hskip-.8cm h|\det Df^n|^{-1} J_Wf^n \cdot \vf'\circ f^n \pf_j'\right|\leq\Const \|h\|_{0,q-p}d(W,W').
\]
Moreover, by \eqref{eq:jacobian} and \eqref{eq:vfbarvf},
\[
\left|\int_{W_{i,z',G'}}\hskip-12pt \cL^n h\vf'-\sum_{j=1}^m Z_j\int_{W_{k_j, z_j,G'_j}}\hskip-.8cm h\gamma_j\circ \psi_j \cdot \bar\vf_j \pf_j'\right|\leq\Const \|h\|_{0,q-p}d((W,\vf),(W',\vf'))^p.
\]
We can finally compute 
\[
\begin{split}
\left|\int_{W_{i,z,G}}\hskip-6pt \cL^n h\vf-\int_{W_{i,z',G'}}\hskip-6pt \cL^n h\vf'\right|\leq &\sum_{j=1}^m Z_j\left|\int_{W_{k_j,z_j,G_j}}\hskip-12pt h\gamma_j\vf_j\pf_j
-\int_{W_{k_j,z_j,G'_j}}\hskip-12pt h\gamma_j\circ\psi_j\bar \vf_j\pf'_j\right|\\
&+\Const \|h\|_{0,q-p}d((W,\vf),(W',\vf'))^p.
\end{split}
\]
At last notice that, recalling \eqref{eq:distance},
\[
d((W_{k_j,z_j,G_j}, \gamma_j\vf_j\pf_j), (W_{k_j,z_j,G_j'}, \gamma_j\circ \psi_j\bar \vf_j\pf'_j))\leq \Const \lambda^{-n} d(W,W').
\]
Taking the sup on the manifolds and test functions and recalling \eqref{ly-gen-1} yields
\[
\| \cL^n h\|_{p,q}\leq C_* \max\{\lambda^{-np}, a^{-1},\lambda^{(p-q)n}\}\|h\|_{p,q}+C_n \|h\|_{0,q},
\]
for some constant $C_*>0$.
To conclude we choose $\bar n$ such that 
\[
\nu^{\min\{p,q-p\}}\geq \left[C_* \max\{\lambda^{-np}, \lambda^{(p-q)n}\}\right]^{1/\bar n},
\]
and then choose $a=\nu^{-\bar n}C_*$. The Lemma follows arguing exactly as at the end of Lemma \ref{lem:ly-h}.
\end{proof}
We leave to the reader the (simple) proof that the unit ball of $\cB^{p,q}$ is weakly compact in $\cB^{0,q}$ for each $q\in (0,\alpha)$ and $p\in (0,q)$. Hence the transfer operator is compact as an operator from $\cB^{p,q}$ to $\cB^{0,q}$. We obtain thus the quasi compactness also in this case. Note however that, due to the low regularity of the map, the essential spectral radius is rather large and it cannot be shrunk by using smaller Banach spaces since on them the Transfer Operator is not well defined.
\begin{rem} The above discussion proves that the essential spectral radius of $\cL$ can be made arbitrarily close to $\lambda^{-\alpha/2}$. The factor $1/2$ in the exponent first appeared in the pioneering work of Kitaev \cite{Ki} and is most likely unavoidable.
\end{rem}

%%%%%%%%%%%%%%%%%%
\subsection{ A comment on the discontinuous case}
Another case in which a map has low regularity is when it is only {\em piecewise smooth}. This requires a new idea.

Up to now in the definition of the norms we used manifolds of a fixed, possibly small, size ($\delta$) and the test function were always of compact support. If the map is discontinuous, then $f^{-1}W$ will be cut by the dynamics in several pieces and hence one cannot avoid arbitrarily small manifolds and test functions that are different from zero at the boundary of the manifold.
We are thus forced to include in the set of allowed manifolds $\Sigma$ arbitrarily small manifolds and for $W\in\Sigma$ consider $\vf\in \cC^q(W,\bC)$ rather than $\vf\in \cC^q_0(W,\bC)$. 

This implies that we cannot integrate by part (otherwise we would produce boundary terms that we do not know how to estimate), hence we are limited to $p<1$, even if the map is very regular away from the discontinuities. 

Luckily a second look at Section \ref{sec:lowreg} shows that we never integrated by part, thus we could have worked with $\cC^q(W,\bC)$ as well.\footnote{ Indeed, there was no need to restrict to functions vanishing at the boundary of  the manifold.}
However, a quick inspection to the previous arguments shows that they do not work for arbitrarily small manifolds, as the constants in the Lasota-Yorke inequality depend on $\delta$. It is necessary to treat small manifolds differently.

A possible solution to this problem, first implemented in \cite{DL} and inspired by \cite{Li2}, is to add to the strong norm a term of the form
\[
\sup_{(W, \vf)\in \Omega^q}\frac{1}{|W|^\alpha}\int_W h \vf,
\]
for some $\alpha\in(0,1)$. This means that the integral of $h$ on a small manifolds is small, but not proportional to the volume of the piece, hence $h$ is not necessarily a function and it can have a very wild behaviour on small scales.
%%%%%%%%%%%%%
\section{ Statistical properties of uniformly Hyperbolic maps}
In section \ref{sec:hyp} we have seen how to extend the functional approach to general Anosov maps. Yet, we did not explained what are the consequences. Here we very briefly discuss what can be obtained by this formalism. We limit the discussion to $\cC^r$ Anosov maps.
\subsection{ Decay of correlations and Limit Theorems}
In Section \ref{sec:norms1} we have seen that $\cL$ is quasi compact, hence it has only finitely many eigenvalues of modulus one.  Moreover, since $\cL$ is a positive operator (it sends positive functions in positive functions) it is possible to prove that the spectrum on the unit circle forms a group under multiplication. In addition, the operator is power bounded and hence it cannot have Jordan blocks, thus the geometric and algebraic multiplicity of the peripheral spectrum are the same. Hence, since one is an eigenvalue, the dimension of the eigenspace associated to the eigenvalue one corresponds to the number of SRB measures. This is quite a bit of information, however the fine structure of the spectrum is not know in general.

In particular, it is not known if Anosov maps always have a unique SRB measure. This depends on global topological properties that are not easily read from the study of the transfer operator. If the map has a unique SRB measure, then there is a dichotomy: either the map is not mixing (there are other eigenvalues, besides one, on the unit circle) or it mixes exponentially fast (one is the only eigenvalue on the unit circle and hence the operator has a spectral gap). 

Accordingly, if the system is mixing, then the rate of mixing is determined by the eigenvalues of the point spectrum of $\cL$. In particular, if an observable belongs to the kernel of the spectral projection of the largest eigenvalues, then it will mix faster.

Without entering in any detail let me conclude by just pointing out that we have now the technology to upgrade all the results of Section \ref{sec:expaning} to the case of uniformly hyperbolic maps. In particular, we can study operators with a smooth potential hence obtain the CLT, Local CLT and Large deviations. Also the perturbation theory of Section \ref{sec:perturb} applies and we can prove stochastic and deterministic stability. Moreover, the slightly more general perturbation theorem in \cite[Section 8]{GL1} implies linear response. In addition, using weighted operators one can construct manifold invariant measures and use the thermodynamic formalism to estimate the Hausdorff dimension of many dynamically relevant sets. There is however an issue that we have not discussed: if one wants to study, e.g., the measure of maximal entropy, then one has to consider a transfer operator with a weight given by the expansion in the stable direction. This, unfortunately, is (in general) only H\"older also for very regular maps. 
Of course one could study such a situation using the norms detailed in Section \ref{sec:lowreg}, however the question remains if it is possible or not to shrink the essential spectrum radius or one has to live with a very large essential spectral radius also for very regular maps. The answer is that the essential spectrum can be shrunk exactly as in Section \ref{sec:norms1}. In order to do so it is however necessary to consider slightly more general Banach spaces, the details can be found, e.g., in \cite{GL2}.

\newpage

\end{document}